\documentclass{article}
\usepackage{authblk}
\usepackage[utf8]{inputenc}
\usepackage{amsmath,amsthm, amssymb, amsfonts}
\usepackage{fullpage}
\usepackage[pdfencoding=auto]{hyperref}
\usepackage{graphicx}
\usepackage{todonotes}
\usepackage{comment}
\usepackage{mathtools}
\usepackage{wrapfig}
\usepackage{microtype}
\usepackage[]{algorithm2e}

\usepackage{xcolor}

\raggedbottom
\allowdisplaybreaks[1]
\usepackage{sistyle} 

\newcommand{\aninorm}{\mathcal{N}}

\newcommand{\Z}{\mathbb{Z}}
\newcommand{\N}{\mathbb{N}}

\newcommand{\R}{\mathbb{R}}
\newcommand{\vn}{{\vec{n}}}

\newcommand{\step}[2]{\smallskip \textbf{Step #1: (#2)} \smallskip \\}

\newcommand{\Bp}{B \! ,p}
\newcommand{\DBp}{D\! B \! }

\let\emptyset\varnothing

\newtheorem{theorem}{Theorem}[section]
\newtheorem*{theorem*}{Theorem}
\newtheorem{remark}[theorem]{Remark}
\newtheorem{definition}[theorem]{Definition}
\newtheorem{proposition}[theorem]{Proposition}

\newtheorem{assumption}[theorem]{Assumption}
\newtheorem*{remark*}{Remark}
\newtheorem*{proposition*}{Proposition}

\numberwithin{equation}{section}

\DeclareMathOperator{\suppp}{supp \,}
\DeclareMathOperator*{\argmin}{arg\,min}

\definecolor{darkcandyapplered}{rgb}{0.64, 0.0, 0.0}

\title{\textbf{$\Gamma$-convergence of a shearlet-based Ginzburg--Landau energy}}
\author{\sc{Philipp Christian Petersen}}
\author{\sc{Endre S\"uli}}
\affil{\small{Mathematical Institute, University of Oxford, Woodstock Road, Oxford OX2 6GG, UK}}
\date{}

\begin{document}
\maketitle
\begin{abstract}
We introduce two shearlet-based Ginzburg--Landau energies, based on the continuous and the discrete shearlet transform. The energies result from replacing the elastic energy term of a classical Ginzburg--Landau energy by the weighted $L^2$-norm of a shearlet transform. The asymptotic behaviour of sequences of these energies is analysed within the framework of $\Gamma$-convergence and the limit energy is identified. We show that the limit energy of a characteristic function is an anisotropic surface integral over the interfaces of that function. We demonstrate that the anisotropy of the limit energy can be controlled by weighting the underlying shearlet transforms according to their directional parameter.
\end{abstract}

\noindent{\textbf{Keywords:} Shearlets, Ginzburg--Landau energy, $\Gamma$-convergence}

\smallskip

\noindent{\textbf{AMS MSC2010}: 42C40, 65T60 (Primary), 49M25 (Secondary)}

\section{Introduction}

The Ginzburg--Landau energy \cite{ginzburg1955theory}, introduced initially to model phase-transitions in superconductors, has recently found many applications in image processing \cite{bertozzi2007inpainting, burger2009cahn, dobrosotskaya2013analysis, grossauer2003using}.
In \cite{dobrosotskaya2011wavelet}, a wavelet-based Ginzburg--Landau energy was introduced, which,
in contrast to the original Ginzburg--Landau energy, is derivative-free and the solution of the associated minimisation problem does not require the solution of a partial differential equation.
The wavelet-based Ginzburg--Landau energy is constructed by substituting the elastic energy term of a standard Ginzburg--Landau energy by a wavelet-based Besov seminorm. This seminorm is formed by considering a weighted $L^2$-norm of the semi-continuous wavelet transform.
It was demonstrated in \cite{dobrosotskaya2011wavelet} that this generalised energy admits the following attractive theoretical property:
a sequence of wavelet-based Ginzburg--Landau energies $\Gamma$-converges to a limit that, for piecewise constant functions, can be described by an anisotropic surface integral over the interfaces.
Thereby, the functional introduces a directional anisotropy.
This is particularly convenient since modelling such anisotropies in the classical Ginzburg--Landau energy necessitates the solution of quasilinear PDEs for the numerical minimisation.
The wavelet-based formulation, on the other hand, can be minimised by solving a linear system.

Unfortunately, the wavelet-based functional can only produce very simple anisotropies.
It was conjectured in \cite{dobrosotskaya2013analysis} that a similar approach using directional systems, such as curvelets \cite{candes2000curvelets} or shearlets \cite{labate2005sparse, KGLConstrCmptShear2012},
would produce energies such that the anisotropy of the limiting energy could be more precisely controlled.
Indeed, this approach was numerically analysed in \cite{czaja2013composite}, but no theoretical analysis of the $\Gamma$-convergence was included.
One issue preventing direct translation of the theoretical arguments used for the convergence results of wavelet-based Ginzburg--Landau energies is that neither compactly supported shearlets,
nor any known shearlet system on a bounded domain, not resulting from truncation, forms a tight frame.
The proof of the convergence results for wavelet-based energies in \cite{dobrosotskaya2013analysis} is mostly based on a projection procedure of an underlying function to a space spanned by wavelets up to a maximal scale.
Such a projection operator must necessarily involve the dual frame. If the frame is not tight, then the dual frame is usually not known analytically. This severely complicates the analysis.

In this work, we present the first proof of the $\Gamma$-convergence of a sequence of shearlet-based Ginzburg--Landau energies in Theorem \ref{thm:main} and Theorem \ref{thm:discrete}.
The analysed energies are constructed similarly to the wavelet case, by replacing the
elasticity term of a Ginzburg--Landau energy by a weighted $L^2$-norm of a shearlet transform. We present results on two types of shearlet transforms, the continuous shearlet transform and the discrete shearlet transform.
We precisely identify the $\Gamma$-limit of the sequences of energies and demonstrate that the limit energy applied to characteristic functions of sets with finite perimeter
is a surface integral over the boundaries which can be controlled by weighting the individual shearlet elements according to their orientation parameter.
Our arguments are based on spatial and directional localisation, which demonstrates that for certain functions a reweighted cone-adapted shearlet transform is asymptotically equivalent
to a so-called homogeneous shearlet transform. Contrary to the cone-adapted shearlet transform, the homogeneous shearlet transform is an isometry.
This shows that after spatial and directional localisation the shearlet transform is locally and asymptotically the analysis operator of a tight frame.


The arguments in \cite{dobrosotskaya2011wavelet} describe the $\Gamma$-convergence of a sequence of wavelet-based functionals to an anisotropic perimeter functional by studying the quotient of a wavelet-based Besov seminorm and the $H^1$ seminorm. It is shown that for a particular choice of sequences the quotient converges and the limit can be identified.
To obtain convergence of the quotient for all sequences, a lifting argument is used. This then establishes the asymptotic equivalence of the wavelet-based Ginzburg--Landau energy and the standard Ginzburg--Landau energy up to a multiplicative constant which is given by the limit of the quotient described above. Finally, the fact that the standard Ginzburg--Landau energy converges to the perimeter functional is used to establish the $\Gamma$-limit of the wavelet-based functional.

Our approach is considerably different. We employ a different technique to construct a recovery sequence in Subsection \ref{sec:recSequenceForFinitePerimeter}, which is not based on projections, but on spatial and directional filtering.
The liminf condition in the definition of $\Gamma$-convergence then results from our construction of the shearlet-based energy, without any lifting step.

In fact, we cannot apply the lifting technique of \cite{dobrosotskaya2011wavelet}, since the argument there is incorrect. To elaborate on this, we provide a counterexample to the main theorem of
\cite{dobrosotskaya2011wavelet} and identify a miscalculation in the lifting argument in Appendix \ref{app:Counterex}.

\section{The continuous shearlet-based Ginzburg--Landau energy}

Before we start introducing shearlet systems and presenting the new construction of a shearlet-based Ginzburg--Landau energy, we first recall some, mostly standard notation in Subsection \ref{sec:notation} below. This section may be skipped and only referred to if a symbol or notion is not clear. In Subsection \ref{sec:ContShearSys}, we then recall the definition and classical results on shearlet systems. In Subsection \ref{sec:NormEquivalence}, we introduce a continuous shearlet system on bounded domains. Finally, in Subsection \ref{sec:SGLE} we introduce the shearlet-based Ginzburg--Landau energy and establish its $\Gamma$-convergence to an anisotropic perimeter functional in Theorem \ref{thm:main}.

\subsection{Notation}\label{sec:notation}
Let $d \in \N$. For $D \subset \R^d$ and $1\leq p\leq \infty$, we denote by $L^p(D)$ the standard Lebesgue spaces with parameter $p$.
We denote by $H^1(D)$ the Sobolev space of once weakly differentiable functions with $\|f\|_{H^1(D)}^2 \coloneqq \|f\|_{L^2(D)}^2 + |f|_{H^1(D)}^2\coloneqq \|f\|_{L^2(D)}^2 + \|\nabla f\|_{L^2(D)}^2$. 
Moreover, we denote by $C^\infty_c((0,1)^2)$ the space of infinitely many times differentiable functions on $(0,1)^2$ the support of which is a closed subset of $(0,1)^2$ (with respect to the topology on $\R^2$). Finally, we denote by $H^1_0((0,1)^2)$ the closure of $C^\infty_c((0,1)^2)$ in the $H^1((0,1)^2)$ norm. 

We denote by $BV$ the space of all functions of bounded variation on $(0,1)^2$ and by SBV the space of all special functions of bounded variation on $(0,1)^2$, see e.g. \cite[Definition 3.1 and Section 4.1]{ambrosio2000functions} for the definitions of these spaces.
The Fourier transform of a function $f\in L^1(\R^d)$ is defined by
$$
\mathcal{F}(f)(\xi) \coloneqq \hat{f}(\xi) \coloneqq \int_{\R^2} f(x)\, \mathrm{e}^{-2\pi i \langle x, \xi\rangle}, \, \,\mathrm{d}x\ \text{ for all } \xi \in \R^d.
$$
It is well known that $\mathcal{F}: L^1(\R^d) \cap L^2(\R^d) \to L^\infty(\R^d)$ can be extended to an isomorphism on $L^2(\R^d)$, which yields a Fourier transform on $L^2(\R^d)$. We denote the Fourier transform on $L^2(\R^d)$ by the same symbol as that on $L^1(\R^d)$. For a set $K \subset \R^d$, we denote by $\chi_K$ the characteristic function of $K$.
For $f \in L^p(\R^d)$, $g \in L^1(\R^d)$, we write $f*g$ for the convolution of $f$ and $g$. For $\phi_1, \phi_2\in L^2(\R)$ we define their tensor product by $(\phi_1 \otimes \phi_2)(x)\coloneqq \phi_1(x_1)\,\phi_2(x_2)$, where $x = (x_1,x_2) \in \R^2$.

We use the symbol $\mathcal{H}_1$ for the one-dimensional Hausdorff measure. For a vector $z \in \R^d$, we denote by $z_i$ the entry at its $i$-th coordinate.
We denote by $|\cdot|$ the Euclidean norm on $\R^d$ and $\|x\|_\infty \coloneqq \sup_{i = 1, \dots, d} |x_i|$ for $x \in \R^d$. $\mathbb{S}^1$ denotes the unit sphere in $\R^2$. For $\varepsilon >0$ and $x \in \R^d$, we denote by $B_\varepsilon(x)$ the open ball of radius $\varepsilon$ centered at $x$.
If $f,g \colon X \to \R^+$ and there is a $c>0$ such that $f(x) \leq c g(x)$ for all $x \in X$, then we write $f \lesssim g$. If $f \lesssim g$ and $g \lesssim f$, then we write $f \sim g$.

\subsection{Continuous shearlet systems}\label{sec:ContShearSys}
A shearlet system over $\R^2$ is a set of functions, generated from shifting, scaling, and shearing of one or more generator functions.
For the scaling and shearing operations, we require the following definitions: for $a \in \R^+$, $s \in \R$, we define the \emph{anisotropic scaling matrix} $A_a$ and the \emph{shear matrix} $S_s$ by
\begin{align*}
    A_a \coloneqq \begin{pmatrix} a & 0 \\ 0 & a^{\frac12} \end{pmatrix}\quad  \text{ and } \quad S_s \coloneqq \begin{pmatrix} 1 & s \\ 0 & 1 \end{pmatrix}.
\end{align*}
We define two shearlet transforms, beginning with the homogeneous shearlet transform, \cite{kutyniok2009resolution, dahlke2008uncertainty}.
\begin{definition}
Let $\psi \in L^2(\R^2)$; then we define
\begin{align*}
\mathcal{SH}: L^2\left(\R^2\right) &\to L^\infty\left(\R^+\times \R \times \R^2\right),\\ \quad
f &\mapsto \big((a,s,t) \mapsto \mathcal{SH}(f)(a,s,t)\coloneqq \langle f, \psi_{a,s,t} \rangle\big),
\end{align*}
where $\langle \cdot, \cdot \rangle$ denotes the inner product in $L^2(\R^2)$ and
$$
	\psi_{a,s,t}(x) \coloneqq a^{-\frac{3}{4}}\psi	\left(A_a^{-1} S_s( x -t)\right), \quad \text{  for } x \in \R^2.
$$
We call $\mathcal{SH}$ the \emph{homogeneous shearlet transform}. Moreover, we call \emph{$\mathcal{SH}(f)$ the homogeneous shearlet transform of $f$}.
\end{definition}
It is clear from the Cauchy--Schwarz inequality that $\mathcal{SH}$ is well-defined. More importantly, under suitable assumptions on $\psi$, $\mathcal{SH}$ is also well-defined as a map to $L^2(\R^+\times \R \times \R^2, a^{-3}\, \mathrm{d}a \,  \mathrm{d}s \, \mathrm{d}t)$
and is, in fact, an isometry; we recall the relevant results below.
\begin{definition}[\cite{dahlke2008uncertainty}]
A function $\psi \in L^2(\R^2)$ is called an \emph{admissible shearlet} if
$$
\int_{\R^2} \frac{|\widehat{\psi}(\xi)|^2}{|\xi_1|^2} \,\mathrm{d}\xi \eqqcolon C_\psi < \infty.
$$
\end{definition}
The following result shows the previously announced norm-equivalence and, for $C_\psi = 1$, the isometry property of the shearlet transform.
\begin{theorem}[\cite{dahlke2008uncertainty, grohs2011continuous}]\label{thm:HomFrameL2}
Let $\psi\in L^2(\R^2)$ be an {admissible shearlet}; then, for all $f \in L^2(\R^2)$,
$$
\|f\|_{L^2(\R^2)}^2 = C_\psi^{-1} \int_{\R^2}\int_{\R} \int_{\R^+} |\mathcal{SH}(f)(a,s,t)|^2 a^{-3} \, \mathrm{d}a \,  \mathrm{d}s \, \mathrm{d}t.
$$
\end{theorem}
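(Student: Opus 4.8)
The plan is to reduce the claimed norm equality to the Plancherel theorem in the translation variable $t$, followed by an explicit change of variables in the scale and shear variables $(a,s)$ that manufactures the constant $C_\psi$. Throughout, every integrand that appears is non-negative, so Tonelli's theorem lets me integrate in whatever order is convenient and read all identities in $[0,\infty]$; in particular no dominated-convergence subtleties arise.

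First I would freeze $a\in\R^+$ and $s\in\R$ and write $\psi_{a,s}\coloneqq\psi_{a,s,0}$, so that $\psi_{a,s,t}(x)=\psi_{a,s}(x-t)$. By Parseval's identity on $L^2(\R^2)$ and the translation rule $\widehat{\psi_{a,s,t}}(\xi)=\mathrm{e}^{-2\pi i\langle t,\xi\rangle}\widehat{\psi_{a,s}}(\xi)$, the slice $t\mapsto\mathcal{SH}(f)(a,s,t)$ is the inverse Fourier transform of $\hat f\,\overline{\widehat{\psi_{a,s}}}$; hence Plancherel in $t$ gives, for each fixed $(a,s)$,
$$
\int_{\R^2}|\mathcal{SH}(f)(a,s,t)|^2\,\mathrm{d}t=\int_{\R^2}|\hat f(\xi)|^2\,|\widehat{\psi_{a,s}}(\xi)|^2\,\mathrm{d}\xi .
$$
Next I would make $\widehat{\psi_{a,s}}$ explicit via the dilation--shear rule for the Fourier transform: since $\psi_{a,s}=a^{-3/4}\psi(A_a^{-1}S_s\,\cdot\,)$,
$$
\widehat{\psi_{a,s}}(\xi)=a^{-3/4}\,|\det(A_a^{-1}S_s)|^{-1}\,\widehat\psi\big((A_a^{-1}S_s)^{-T}\xi\big),
$$
where the argument $(A_a^{-1}S_s)^{-T}\xi$ is an explicit anisotropically rescaled, sheared copy of $\xi$ whose first component is a pure rescaling of $\xi_1$ by a power of $a$. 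Inserting this, multiplying by $a^{-3}$ and integrating over $(a,s,t)$ (legitimate in any order), the right-hand side of the theorem becomes
$$
C_\psi^{-1}\int_{\R^2}|\hat f(\xi)|^2\left(\int_{\R^+}\int_{\R}w(a)\,\big|\widehat\psi\big((A_a^{-1}S_s)^{-T}\xi\big)\big|^2\,\mathrm{d}s\,\mathrm{d}a\right)\mathrm{d}\xi,
$$
with $w(a)$ an explicit power of $a$ assembled from $a^{-3}$, $a^{-3/4}$ and $|\det(A_a^{-1}S_s)|^{-1}$.

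The heart of the argument is to evaluate the inner bracket and show it equals $C_\psi$ for almost every $\xi$. For $\xi$ with $\xi_1\neq0$ — the set $\{\xi_1=0\}$ being Lebesgue-null in $\R^2$ and hence irrelevant — I would change variables $(a,s)\mapsto\eta\coloneqq(A_a^{-1}S_s)^{-T}\xi$. Since $\eta_1$ is a rescaling of $\xi_1$ and, for each fixed $a$, $\eta_2$ sweeps out all of $\R$ as $s$ does, this map is a smooth bijection of $\{(a,s):a>0\}$ onto a half-plane in the $\eta$-variable. Its Jacobian is a monomial in $a$ and $\xi_1$; re-expressing $a$ through $\eta_1/\xi_1$, one checks that $w(a)$ together with the Jacobian collapses to exactly $|\eta_1|^{-2}$, so the bracket reduces to $\int|\widehat\psi(\eta)|^2|\eta_1|^{-2}\,\mathrm{d}\eta$ over the relevant half-plane, and collecting the contributions of the two signs of $\xi_1$ reproduces $\int_{\R^2}|\widehat\psi(\eta)|^2|\eta_1|^{-2}\,\mathrm{d}\eta=C_\psi$. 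Substituting this back and applying Plancherel once more, $\int_{\R^2}|\hat f(\xi)|^2\,\mathrm{d}\xi=\|f\|_{L^2(\R^2)}^2$, yields the assertion.

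The one genuinely delicate point is the bookkeeping in this last step: the Jacobian computation, the tracking of the accumulated powers of $a$, and the matching of the normalisation conventions (in particular which half-plane(s) the substitution covers and how $C_\psi$ is normalised accordingly, as set up in \cite{dahlke2008uncertainty, grohs2011continuous}) — it is precisely this cancellation that forces the homogeneous weight $a^{-3}\,\mathrm{d}a\,\mathrm{d}s\,\mathrm{d}t$ and produces the constant $C_\psi$. Everything else is routine: Plancherel and the Fourier dilation/shear identities are standard, Tonelli applies because all integrands are non-negative, and the fact that $\mathcal{SH}(f)(a,s,\cdot)\in L^2(\R^2)$ for a.e.\ $(a,s)$ falls out of the finiteness of the right-hand side rather than being a prerequisite.
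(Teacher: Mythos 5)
The paper offers no proof of this theorem --- it is imported by citation from \cite{dahlke2008uncertainty, grohs2011continuous} --- so your proposal has to stand on its own. Your overall route is the standard and correct one: Plancherel in $t$ turns the $t$-integral into $\int_{\R^2}|\hat f(\xi)|^2\,|\widehat{\psi_{a,s,0}}(\xi)|^2\,\mathrm{d}\xi$, Tonelli lets you integrate in any order, and with the Fourier transform $\widehat{\psi_{a,s,0}}(\xi)=a^{3/4}\widehat\psi\big(a\xi_1,\sqrt a(\xi_2+s\xi_1)\big)$ (the form actually used in Appendix A of the paper, which is the $L^2$-normalised one) the substitution $(a,s)\mapsto\eta=\big(a\xi_1,\sqrt a(\xi_2+s\xi_1)\big)$ does collapse the accumulated powers of $a$ and the Jacobian to the weight $|\eta_1|^{-2}$, exactly as you claim.

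The gap sits precisely at the point you identify as delicate and then wave away. For a \emph{fixed} $\xi$ with $\xi_1>0$, the map $(a,s)\mapsto\eta$ with $a$ ranging over $\R^+$ is a bijection onto the half-plane $\{\eta_1>0\}$ only, because $\eta_1=a\xi_1$ preserves the sign of $\xi_1$; for $\xi_1<0$ it covers $\{\eta_1<0\}$. The inner bracket therefore equals $\int_{\pm\eta_1>0}|\widehat\psi(\eta)|^2|\eta_1|^{-2}\,\mathrm{d}\eta$ with the sign dictated by $\mathrm{sgn}(\xi_1)$, and since this bracket multiplies $|\hat f(\xi)|^2$ \emph{pointwise in} $\xi$, you cannot ``collect the contributions of the two signs of $\xi_1$'': the two signs weight disjoint portions of the $\xi$-integral of $|\hat f|^2$ separately. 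For the identity to hold for all $f$ you would need both half-plane integrals to equal $C_\psi$, which is impossible since they sum to $C_\psi$. The honest conclusion of your computation is that the statement in the exact form quoted here (scales in $\R^+$, $C_\psi$ the full-plane integral) is not correct in general: either the scale parameter must range over $\R\setminus\{0\}$, as in the shearlet group of \cite{dahlke2008uncertainty}, in which case the full plane is covered and the constant is the full $C_\psi$; or one keeps $a\in\R^+$, adds the hypothesis that the two half-plane integrals coincide (e.g.\ $\psi$ real-valued), and replaces $C_\psi$ by the common value $C_\psi/2$. The fix is a hypothesis and normalisation adjustment rather than a new idea, but your write-up must make it, not elide it.
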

To prevent unequal treatment of directions or shearlet elements $\psi_{a,s,t}$ with very long supports if $s$ becomes large, a cone-adapted shearlet system was introduced in \cite{labate2005sparse}. We define
$$
Q \coloneqq \left(\begin{array}{cc}
    0 & 1 \\
    1 & 0
\end{array} \right),
$$
$\widetilde{A}_a\coloneqq Q A_a$, and
$
\widetilde{\psi}_{a,s,t}\coloneqq a^{-\frac{3}{4}} \psi(Q \widetilde{A}_a^{-1} S_{s}^T (\cdot - t) )$ for $(a,s,t) \in \R^+\times \R \times \R^2$. 
In the sequel, we denote the Fourier transform of $\psi_{a,s,t}$ and $\widetilde{\psi}_{a,s,t}$ by $\widehat{\psi}_{a,s,t}$ and $\widehat{\widetilde{\psi}}_{a,s,t}$, respectively, instead of the more accurate, but aesthetically suboptimal symbols  
$\widehat{\psi_{a,s,t}}$, $\widehat{\widetilde{\psi}_{a,s,t}}$.

Cone-adapted systems are associated with a transform which, similarly to the homogeneous shearlet transform, admits a certain norm-equivalence with the $L^2(\R^2)$ norm.
\begin{theorem}[\cite{grohs2011continuous}]\label{eq:defOfContinuousFrame}
Let $\psi\in L^2(\R^2)$ be an admissible shearlet such that
$$
\left|\widehat{\psi}(\xi)\right| \lesssim \frac{\min\{|\xi_1|^2,1\}}{(1+|\xi_1|^2)(1+|\xi_2|^2)},\quad \text{ for all } \xi = (\xi_1,\xi_2) \in \R^2.
$$
Then, there exist $\mathfrak{a}^*>0$ and $\mathfrak{s}^*>0$ such that, for all $\mathfrak{a}\geq \mathfrak{a}^*$ and $\mathfrak{s} \geq \mathfrak{s}^*$ and every function $K \in L^2(\R^2)$ with $\widehat{K}(\xi) \in [A,B]$ for $0<A\leq B$, for all $\xi \in [-1,1]^2$, we have that
\begin{align}
\|f\|_{L^2(\R^2)}^2 \sim \int_{\R^2} |\langle f, K(\cdot -t) \rangle|^2 \, \,\mathrm{d}t \ +& \int_{\R^2}\int_{-\mathfrak{s}}^{\mathfrak{s}} \int_{0}^{\mathfrak{a}} |\langle f, \psi_{a,s,t} \rangle |^2 a^{-3} \, \mathrm{d}a \,  \mathrm{d}s \, \mathrm{d}t \label{eq:uniformConstants}\\
&\qquad  + \int_{\R^2}\int_{-\mathfrak{s}}^{\mathfrak{s}} \int_{0}^{\mathfrak{a}} |\langle f, \widetilde{\psi}_{a,s,t} \rangle |^2 a^{-3} \, \mathrm{d}a \,  \mathrm{d}s \, \mathrm{d}t, \nonumber
\end{align}
holds for all $f\in L^2(\R^2)$.
\end{theorem}
\begin{remark}
The implied constants in \eqref{eq:uniformConstants} can be chosen to depend on $K, \psi, \mathfrak{a}^*$, and $\mathfrak{s}^*$ only. Indeed, it is shown in Appendix \ref{app:Bessel} that the upper bound 
\begin{align*}
\int_{\R^2} |\langle f, K(\cdot -t) \rangle|^2 \, \,\mathrm{d}t \ +& \int_{\R^2}\int_{-\mathfrak{s}}^{\mathfrak{s}} \int_{0}^{\mathfrak{a}} |\langle f, \psi_{a,s,t} \rangle |^2 a^{-3} \, \mathrm{d}a \,  \mathrm{d}s \, \mathrm{d}t \\
&\qquad  + \int_{\R^2}\int_{-\mathfrak{s}}^{\mathfrak{s}} \int_{0}^{\mathfrak{a}} |\langle f, \widetilde{\psi}_{a,s,t} \rangle |^2 a^{-3} \, \mathrm{d}a \,  \mathrm{d}s \, \mathrm{d}t \leq C \|f\|_{L^2(\R^2)}^2
\end{align*}
holds for a constant $C>0$ independent of $\mathfrak{a}$ and $\mathfrak{s}$. On the other hand, the lower bound 
\begin{align*}
C' \|f\|_{L^2(\R^2)}^2 \leq \int_{\R^2} |\langle f, K(\cdot -t) \rangle|^2 \, \,\mathrm{d}t \ +& \int_{\R^2}\int_{-\mathfrak{s}}^{\mathfrak{s}} \int_{0}^{\mathfrak{a}} |\langle f, \psi_{a,s,t} \rangle |^2 a^{-3} \, \mathrm{d}a \,  \mathrm{d}s \, \mathrm{d}t \\
&\qquad  + \int_{\R^2}\int_{-\mathfrak{s}}^{\mathfrak{s}} \int_{0}^{\mathfrak{a}} |\langle f, \widetilde{\psi}_{a,s,t} \rangle |^2 a^{-3} \, \mathrm{d}a \,  \mathrm{d}s \, \mathrm{d}t. \nonumber
\end{align*}
clearly holds for all $\mathfrak{a}\geq \mathfrak{a}^*$ and $\mathfrak{s}\geq \mathfrak{s}^*$ as soon as it holds for $\mathfrak{s}= \mathfrak{s}^*$ and $\mathfrak{a} = \mathfrak{a}^*$.
\end{remark}

We have seen that the homogeneous and the cone-adapted shearlet transforms satisfy certain norm-equivalences to the $L^2$-norm.
Additionally, it is also possible to characterise Sobolev norms by these transforms in the following way.
Note that we have, for $f \in H^1(\R^2)$, by Parseval's identity, that
$$
|f|_{H^1(\R^2)}^2 = \left\|\frac{\partial f}{\partial x_1} \right\|_{L^2(\R^2)}^2 + \left\|\frac{\partial f}{\partial x_2} \right\|_{L^2(\R^2)}^2 = (2\pi)^2\left(\left\|\xi_1 \hat{f}\right\|_{L^2(\R^2)}^2 + \left\|\xi_2 \hat{f}\right\|_{L^2(\R^2)}^2\right).
$$
Let $\psi \in L^2(\R^2)$ be such that
\begin{align}\label{eq:TheAssumption}
\int_{\R^2} \frac{|\widehat{\psi}(\xi)|^2}{|\xi_1|^4} \,\mathrm{d}\xi = (2\pi)^2,
\end{align}
and set $\widehat{\mu}(\xi) \coloneqq \widehat{\psi}(\xi)/(2\pi\xi_1)$, then $C_\mu = 1$.
By Parseval's identity, we observe that
\begin{align*}
\int_{\R^2}\int_{\R} \int_{\R^+} a^{-2}|\langle f, \psi_{a,s,t} \rangle|^2 a^{-3} \, \mathrm{d}a \, \mathrm{d}s \, \,\mathrm{d}t = & \ \int_{\R^2}\int_{\R} \int_{\R^+} \left|\left\langle \frac{\widehat{\psi}_{a,s,t}}{2\pi a \xi_1} , 2\pi\xi_1 \hat{f} \right\rangle \right|^2 a^{-3} \, \mathrm{d}a \, \mathrm{d}s \, \mathrm{d}t\\
= & \ \int_{\R^2}\int_{\R} \int_{\R^+} \left|\left\langle \widehat{\mu}_{a,s,t} , 2\pi\xi_1 \hat{f} \right\rangle \right|^2 a^{-3} \, \mathrm{d}a \, \mathrm{d}s \, \mathrm{d}t = (2\pi)^2\left\|\xi_1 \hat{f}\right\|_{L^2(\R^2)}^2.
\end{align*}
Additionally, we will show the following result in Appendix \ref{app:H1Frame}. 
\begin{theorem}\label{thm:EmbeddingH1R2}
Let $\psi\in L^2(\R^2)$ be an admissible shearlet; then, there exist $\mathfrak{a}^*>0$, $\mathfrak{s}^*>0$ such that for all $\mathfrak{a}\geq \mathfrak{a}^*$, $\mathfrak{s} \geq \mathfrak{s}^*$ and every function $K \in L^2(\R^2)$ with $|\widehat{K}(\xi)|/|\xi| \in [A,B]$ for all $\xi \in [-1,1]^2$, and $|\widehat{K}(\xi)| \leq B$, for all $\xi \in \R^2$, where $0<A\leq B$,
there are $0<c_1\leq c_2$ such that we have, for all $f\in H^1(\R^2)$,
\begin{align}\label{eq:embeddingH1R2}
c_1 |f|_{H^1(\R^2)}^2 \leq \int_{\R^2} |\langle f, K(\cdot -t) \rangle|^2 \, \,\mathrm{d}t \ +  & \int_{\R^2}\int_{-\mathfrak{s}}^{\mathfrak{s}} \int_{0}^{\mathfrak{a}} a^{-2}|\langle f, \psi_{a,s,t} \rangle |^2 a^{-3} \, \mathrm{d}a \, \mathrm{d}s \, \mathrm{d}t\\
&\qquad  + \int_{\R^2}\int_{-\mathfrak{s}}^{\mathfrak{s}} \int_{0}^{\mathfrak{a}} a^{-2} \left|\left\langle f, \widetilde{\psi}_{a,s,t} \right\rangle \right|^2 a^{-3} \, \mathrm{d}a \, \mathrm{d}s \, \mathrm{d}t \leq c_2 |f|_{H^1(\R^2)}^2. \nonumber
\end{align}
The constants $c_1,c_2$ may depend on $\mathfrak{a}^*$ and $\mathfrak{s}^*$, but are independent of $\mathfrak{a}$ and $\mathfrak{s}$.
\end{theorem}
Note that, by Parseval's identity, if $|\widehat{K}(\xi)| \leq B$ for all $\xi \in \R^2$, then 
\begin{align} \label{eq:estimateOfKTerm}
\int_{\R^2} |\langle f, K(\cdot -t) \rangle|^2 \, \,\mathrm{d}t = \int_{\R^2} | f * K(t) |^2 \, \,\mathrm{d}t = \int_{\R^2} |\hat{f}(\xi)|^2 |\widehat{K}(\xi)|^2 \, \,\mathrm{d}\xi \leq B^2 \|f\|_{L^2(\R^2)}^2.
\end{align}

The last step towards constructing a shearlet-based Besov seminorm is to introduce directional weights to control the anisotropy of the resulting seminorms.
\begin{definition}\label{def:directionalWeight}
Let $\mathfrak{s}> \mathfrak{s}^* > 0$. A \emph{directional weight} $\omega: \{-1,1\} \times \R \to \R$ is a Lipschitz continuous map such that, for $\iota \in \{-1,1\}$,
$$
 \suppp \omega(\iota, \cdot ) \subset [-\mathfrak{s}, \mathfrak{s}] \qquad \text{ and } \qquad \min_{\iota \in \{-1,1\}}\min_{s \in [-\mathfrak{s}^*,\mathfrak{s}^*]} \omega(\iota, s) >0.
$$
\end{definition}

If $\mathfrak{a}> \mathfrak{a}^* > 0$ and $\mathfrak{s}> \mathfrak{s}^*>0$, and $\psi, K \in L^2(\R^2)$ are such that \eqref{eq:embeddingH1R2} holds and $\omega$ is a directional weight, then it is not hard to see that there exist $0 < c_1\leq c_2$ such that
\begin{align} \label{eq:R2EquivalenceShearlets}
c_1 |f|_{H^1(\R^2)}^2 \leq \int_{\R^2} |\langle f, K_t \rangle|^2 \, \,\mathrm{d}t \ +  & \sum_{\iota = -1,1} \int_{\R^2}\int_{-\mathfrak{s}}^{\mathfrak{s}} \int_{0}^{\mathfrak{a}} a^{-2}|\langle f, \psi_{a,s,t, \iota}^\omega \rangle |^2 a^{-3} \, \mathrm{d}a \, \mathrm{d}s \, \mathrm{d}t \leq c_2 |f|_{H^1(\R^2)}^2.
\end{align}

Finally, we introduce the shearlet-based Besov seminorm.

\begin{definition}
Let $\mathfrak{a}> \mathfrak{a}^* > 0$, and $\mathfrak{s}> \mathfrak{s}^*>0$, and let $\psi, K \in L^2(\R^2)$ be such that \eqref{eq:embeddingH1R2} holds. Let $\omega$ be a directional weight. We define, for $a \in \R^+$, $s\in \R$, $t\in \R^2$:
$$
K_t \coloneqq K(\cdot -t), \quad  \psi_{a,s,t, 1}^{\omega} \coloneqq  \omega(1, s)\psi_{a,s,t}, \quad \text{ and } \quad \psi_{a,s,t, -1}^{\omega} \coloneqq  \omega(-1, s)\widetilde{\psi}_{a,s,t}.
$$
Moreover, for $f\in L^2(\R^2)$, we define the \emph{shearlet-based Besov seminorm $|\cdot|_{B}$ on $\R^2$} by
\begin{align*}
|f|_{B}^2 \coloneqq & \sum_{\iota = -1,1}\int_{\R^2}\int_{-\mathfrak{s}}^{\mathfrak{s}} \int_{0}^{\mathfrak{a}} a^{-2} |\langle f, \psi_{a,s,t, \iota}^\omega \rangle |^2 a^{-3} \, \mathrm{d}a \, \mathrm{d}s \, \mathrm{d}t \in [0,\infty].
\end{align*}
\end{definition}

\subsection{Shearlet transform on a bounded domain and norm-equivalence}\label{sec:NormEquivalence}
To define a shearlet-based Besov seminorm on a bounded domain, we first need to adapt the shearlet system from $\R^2$ accordingly.
A simple procedure to generate a shearlet system on $(0,1)^2$ is to make the elements of the system periodic using the following approach: let $\mathfrak{a}> \mathfrak{a}^* > 0$, and $\mathfrak{s}> \mathfrak{s}^*>0$, and let $\psi, K \in L^2(\R^2)$ be such that \eqref{eq:embeddingH1R2} holds. Now define
\begin{align} \label{eq:constructionOfPeriodicElements}
\psi_{a,s,t,\iota}^{\omega, per} \coloneqq \sum_{k \in \Z^2} \psi_{a,s,t,\iota}^{\omega}(\cdot - k)\qquad \text{ and }\qquad K_t^{per} \coloneqq \sum_{k \in \Z^2} K_t(\cdot - k).
\end{align}
In a different setting of discrete shearlet systems, there have been alternative constructions of shearlets on bounded domains, see \cite{grohs2015anisotropic, petersen2017approximation}. The associated constructions are more involved than the approach via periodisation above. This is because in the discrete setting the approximation properties of the associated systems are analysed, which is not a concern in our approach. Moreover, the approaches in \cite{grohs2015anisotropic, petersen2017approximation} are based on discrete, boundary-adapted wavelet systems, which do not generalise to continuous parameter sets.

Based on the periodic construction of \eqref{eq:constructionOfPeriodicElements}, we proceed to define the \emph{shearlet-based Besov seminorm on $(0,1)^2$} by
\begin{align*}
|f|_{\Bp}^2 \coloneqq \sum_{\iota = -1,1} \int_{(0,1)^2}\int_{-\mathfrak{s}}^{\mathfrak{s}} \int_{0}^{\mathfrak{a}} a^{-2} \left|\left\langle f, \psi_{a,s,t, \iota}^{\omega, per} \right\rangle \right|^2 a^{-3} \, \mathrm{d}a \, \mathrm{d}s \, \mathrm{d}t \in [0,\infty], \quad \text{ for } f \in L^2\left((0,1)^2\right).
\end{align*}
Sometimes it will be necessary to also study a \emph{shearlet-based Besov-functional with nonperiodic elements} which, for $r>0$, is defined as follows:
\begin{align*}
|f|_{B, [-r,r]}^2 \coloneqq \sum_{\iota = -1,1} \int_{[-r,r]^2}\int_{-\mathfrak{s}}^{\mathfrak{s}} \int_{0}^{\mathfrak{a}} a^{-2} \left|\left\langle f, \psi_{a,s,t, \iota}^{\omega} \right\rangle \right|^2 a^{-3} \, \mathrm{d}a \, \mathrm{d}s \, \mathrm{d}t \in [0,\infty],\quad  \text{ for } f \in L^2\left(\R^2\right).
\end{align*}
We shall be interested in the question as to whether $|f|_{\Bp}$ is equivalent to the Sobolev seminorm $|f|_{H^1((0,1)^2)}$.

Since $\langle f, \psi_{a,s,t, \iota}^{\omega, per} \rangle = \langle f^{per}, \psi_{a,s,t, \iota}^{\omega} \rangle$, where $f^{per}$ is a periodised version of $f$, we deduce from \eqref{eq:embeddingH1R2}
that $|\cdot|_{\Bp}$ is \emph{not} equivalent to $|\cdot|_{H^1((0,1)^2)}$ if the situation $f^{per}\not \in H^1(\R^2)$ is possible.
In \cite{dobrosotskaya2011wavelet}, a similar construction is presented for a wavelet-based Besov seminorm.
It is claimed that the Besov seminorm on $[0,1]^2$ defined by a sufficiently regular, periodic wavelet is equivalent to the $H^1((0,1)^2)$ seminorm.
As has just been demonstrated this assertion is invalid if $f^{per}\not \in H^1(\R^2)$ is possible, i.e., if $f$ has non-matching boundary conditions.
However, the result does hold true for compactly supported wavelets over $H_0^1((0,1)^2)$ or if another form of matching boundary condition at $0$ and $1$ is prescribed.
A proof of such a result is not given in \cite{dobrosotskaya2011wavelet}, but
following the arguments of the proposition below and replacing shearlets by wavelets when appropriate one can deduce the asserted equivalence of seminorms in the regime of \cite{dobrosotskaya2011wavelet} as well.

\begin{proposition}\label{prop:equivalence}
Let $A>0$, $B>0$, and $\psi \in L^2(\R^2)$ be such that $\suppp \psi \subset [0,1]^2$ and such that, for $K \in L^2(\R^2)$ with $|\widehat{K}(\xi)|/|\xi| \in [A,B]$ for all $\xi \in [-1,1]^2$ and for all $f\in H^1(\R^2)$, \eqref{eq:R2EquivalenceShearlets} holds. Then, there exists a $B' > 0$ such that
\begin{align}\label{eq:normEqBdDomain}
|f|_{H^1((0,1)^2)}^2 \sim |f|_{\Bp}^2, \quad \text{ for all } f \in H^1_0\left((0,1)^2\right) \text{ with }~ \|f\|_{L^2((0,1)^2)}^2 \leq \frac{|f|_{H^1((0,1)^2)}^2}{2B'}
\end{align}
and
\begin{align}\label{eq:normInEqBdDomain}
|f|_{\Bp}^2 \lesssim |f|_{H^1((0,1)^2)}^2, \quad \text{ for all } f \in H^1_0\left((0,1)^2\right).
\end{align}
\end{proposition}
\begin{proof}
By \eqref{eq:estimateOfKTerm} and \eqref{eq:R2EquivalenceShearlets}, there exist $c_1,c_2 > 0$ such that, for all $f\in H^1(\R^2)$,
\begin{align}\label{eq:VeryImprovedSemiNormInequality}
c_1 \left(|f|_{H^1(\R^2)}^2 - \frac{B}{c_1}\|f\|_{L^2(\R^2) }^2\right) \leq
\sum_{\iota = -1,1} \int_{\R^2}\int_{-\mathfrak{s}}^{\mathfrak{s}} \int_{0}^{\mathfrak{a}} a^{-2} |\langle f, \psi_{a,s,t, \iota}^{\omega} \rangle |^2 a^{-3} \, \mathrm{d}a \, \mathrm{d}s \, \mathrm{d}t \leq c_2 |f|_{H^1(\R^2)}^2.
\end{align}
For a function $f\in H^1_0((0,1)^2)$ and $N_1,N_2 \in \Z$ with $N_1<N_2$, we define $f_{[N_1,N_2]}(x)\coloneqq f(x- \lfloor x \rfloor)$ for all $x \in [N_1,N_2]^2$ and 0 otherwise, where the floor function $\lfloor .  \rfloor$ is applied component-wise. 
It can easily be verified that $f_{[N_1,N_2]}\in H_0^1((N_1,N_2)^2)$ and $|f_{[N_1,N_2]}|_{H^1(\R^2)}^2 = (|N_2|^2-|N_1|^2) |f|_{H^1((0,1)^2)}^2$. Moreover, it is not hard to see that for $N_1 \leq M_1 \leq M_2 \leq N_2$ we have that
\begin{align} \label{eq:TheEstimateOnTheDifferenceOfTwoPeriodicExtensions}
\left|f_{[N_1,N_2]} - f_{[M_1,M_2]}\right|_{H^1(\R^2)}^2 \leq ((N_2-N_1)(M_1 - N_1) + (N_2-N_1)(N_2-M_2)) |f|_{H^1((0,1)^2)}^2.
\end{align}
Without the assumption on the boundary condition for $f$, the conclusion $f_{[N_1,N_2]}\in H_0^1((N_1,N_2)^2)$ would not hold and the proof would fail.

It is not hard to see that there exists an $R \coloneqq R(\mathfrak{a}, \mathfrak{s})\in \N$ such that $\suppp \psi_{a,s,t,\iota} \subset B_{R}(t)$ for all $(a,s,t,\iota) \in (0,\mathfrak{a}] \times [-\mathfrak{s}, \mathfrak{s}] \times \R^2 \times \{-1,1\}$.
For $f \in H_0^1((0,1)^2)$, we have
\begin{align}
|f|_{\Bp}^2 &= \sum_{\iota = -1,1} \int_{0}^{\mathfrak{a}} \int_{-\mathfrak{s}}^{ \mathfrak{s}} a^{-2} \int_{(0,1)^2} \left|\left\langle f, \psi_{a,s,t, \iota}^{\omega, per}\right\rangle_{L^2(\R^2)}\right|^2 a^{-3} \, \mathrm{d}t  \, \mathrm{d}s \, \mathrm{d}a\nonumber\\
& = \sum_{\iota = -1,1} \int_{0}^{\mathfrak{a}} \int_{-\mathfrak{s}}^{ \mathfrak{s}} a^{-2} \int_{(0,1)^2} \left|\left\langle f_{[-R, 1+ R] }, \psi_{a,s,t, \iota}^{\omega}\right\rangle_{L^2(\R^2)}\right|^2 a^{-3} \, \mathrm{d}t  \, \mathrm{d}s \, \mathrm{d}a\label{eq:upperbound}
\eqqcolon  \mathrm{I}.
\end{align}
By the definition of the functions $\psi_{a,s,t, \iota}^{\omega}$, we have that, for every $t^*\in \R^2$,
$$
\left \langle f , \psi_{a,s,t, \iota}^{\omega} \right \rangle =  \left \langle f(\cdot - t^*) , \psi_{a,s,t + t^*, \iota}^{\omega} \right \rangle.
$$
Therefore, we observe that
\begin{align*}
\mathrm{I} =& \ \frac{1}{N^2} \sum_{\iota = -1,1} \int_{0}^{\mathfrak{a}} \int_{-\mathfrak{s}}^{ \mathfrak{s}} a^{-2} \int_{[0,N]^2} \left|\left\langle f_{[-R,N + R]}, \psi_{a,s,t, \iota}^{\omega}\right\rangle_{L^2(\R^2)}\right|^2 a^{-3} \, \, \,\mathrm{d}t \, \mathrm{d}s \, \mathrm{d}a\\
=& \ \frac{1}{N^2} \sum_{\iota = -1,1} \int_{0}^{\mathfrak{a}} \int_{-\mathfrak{s}}^{ \mathfrak{s}} a^{-2} \int_{\R^2} \left|\left\langle f_{[-R,N + R]}, \psi_{a,s,t, \iota}^{\omega}\right\rangle_{L^2(\R^2)}\right|^2 a^{-3} \, \, \,\mathrm{d}t \, \mathrm{d}s \, \mathrm{d}a\\
& \quad  - \frac{1}{N^2} \sum_{\iota = -1,1} \int_{0}^{\mathfrak{a}} \int_{-\mathfrak{s}}^{ \mathfrak{s}} a^{-2} \int_{\R^2 \setminus [0,N]^2} \left|\left\langle f_{[-R,N + R]}, \psi_{a,s,t, \iota}^{\omega}\right\rangle_{L^2(\R^2)}\right|^2 a^{-3} \, \, \,\mathrm{d}t \, \mathrm{d}s \, \mathrm{d}a \eqqcolon\mathrm{II}.
\end{align*}
We proceed by using that $\langle f_{[R,N-R]}, \psi_{a,s,t, \iota}\rangle = 0$, for all $t\not \in  [0,N]^2$ since $\suppp \psi_{a,s,t, \iota} \cap [R,N-R]^2 = \emptyset$.
Moreover, by the seminorm-equivalence of \eqref{eq:VeryImprovedSemiNormInequality}, $c_2 |\,.\,|_{H^1(\R^2)}^2 \geq |\,.\,|_{B}^2$ and $|\,.\,|_{B}^2 \geq c_1 (|\,.\,|_{H^1(\R^2)}^2 - B/c_1 \|\,.\,\|_{L^2(\R^2)}^2)$. We compute that
\begin{align*}
 \mathrm{II} = & \ \frac{1}{N^2} \sum_{\iota = -1,1} \int_{0}^{\mathfrak{a}} \int_{-\mathfrak{s}}^{ \mathfrak{s}} a^{-2} \int_{\R^2} \left|\left\langle f_{[-R,N + R]}, \psi_{a,s,t, \iota}^{\omega}\right\rangle_{L^2(\R^2)}\right|^2 a^{-3} \, \, \,\mathrm{d}t \, \mathrm{d}s \, \mathrm{d}a\\
& \quad  - \frac{1}{N^2} \sum_{\iota = -1,1} \int_{0}^{\mathfrak{a}} \int_{-\mathfrak{s}}^{ \mathfrak{s}} a^{-2} \int_{\R^2} \left|\left\langle f_{[-R,N + R]} - f_{[R,N- R]}, \psi_{a,s,t, \iota}^{\omega}\right\rangle_{L^2(\R^2)}\right|^2 a^{-3} \, \, \,\mathrm{d}t \, \mathrm{d}s \, \mathrm{d}a\\
 \geq & \ \frac{c_1}{N^2} \left(\left|f_{[-R,N + R]}\right|_{H^1(\R^2)}^2 - \frac{B}{c_1}\left\|f_{[-R,N + R]}\right\|_{L^2(\R^2)}^2\right) - \frac{c_2}{N^2} \left|f_{[-R,N+ R]} - f_{[R,N- R]}\right|_{H^1(\R^2)}^2\\
= & \ \frac{c_1 ((N+2R)^2)}{N^2}\left(\left|f_{[0,1]}\right|_{H^1((0,1)^2)}^2 - \frac{B}{c_1}\left\|f_{[0,1]^2}\right\|_{L^2(\R^2)}^2\right) - \frac{c_2}{N^2} \left|f_{[-R,N+ R]} - f_{[R,N- R]}\right|_{H^1(\R^2)}^2\\
\geq & \ \frac{c_1 ((N+2R)^2)}{2N^2}\left|f_{[0,1]}\right|_{H^1((0,1)^2)}^2  - \frac{2(N+2R) 2R c_2}{2N^2} \left|f_{[0,1]}\right|_{H^1(\R^2)}^2,
\end{align*}
where the last line holds thanks to the second bound \eqref{eq:normEqBdDomain} with $B' \coloneqq B/c_1$ and \eqref{eq:TheEstimateOnTheDifferenceOfTwoPeriodicExtensions}.
Since $N$ was arbitrary, this completes the proof of the lower bound of the Besov seminorm by the Sobolev seminorm.
The upper bound follows trivially from the norm-equivalence on $\R^2$, equation \eqref{eq:upperbound}, and $|f_{[0,2]}|_{H^1(\R^2)} \leq 2|f|_{H^1((0,1)^2)}$.
\end{proof}

\subsection{The shearlet-based Ginzburg--Landau energy} \label{sec:SGLE}

We start by recalling the definition and some properties of the classical anisotropic Ginzburg--Landau energy.
Let $\aninorm$ be a norm on $\R^2$ and $\mathcal{W}(u) = u^2 (1-u)^2$; then, for $u \in BV$,
\begin{align*}
   \mathrm{GL}_\varepsilon^{\aninorm}(u) \coloneqq \left\{ \begin{array}{ll}
   \varepsilon \int_{(0,1)^2} \aninorm(\nabla u)^2 \, \,\mathrm{d}x + \frac{1}{4\varepsilon}\int_{(0,1)^2}\mathcal{W}(u)(x) \, \,\mathrm{d}x, & \text{ if } u \in H^1\left((0,1)^2\right),\\
    \infty, & \text{ if } u \in BV \setminus H^1\left((0,1)^2\right). \end{array}\right.
\end{align*}
The Ginzburg--Landau energies as defined above $\Gamma$-converge to an anisotropic perimeter functional for $\varepsilon \to 0$, see \cite[Theorem 4.13]{braides1998approximation}.
Let us recall the definition of $\Gamma$-convergence before making the previous statement more precise.

\begin{definition}
For a space $X$, we say that a sequence of functionals $F_\varepsilon: X \to [-\infty, \infty]$ \emph{$\Gamma$-converges} to a functional $F: X \to [-\infty, \infty]$ for $\varepsilon \to 0$, if the following two conditions are satisfied:

\begin{enumerate}
\item \emph{lim-inf condition}: for all $u \in X$ and all sequences $(u_\varepsilon)_{\varepsilon>0} \subset X$ such that $u_\varepsilon \to u$, for $\varepsilon \to 0$, we have that
\begin{align}\label{eq:EverySequence}
\liminf_{\varepsilon\to 0} F_\varepsilon(u_\varepsilon) \geq F(u);
\end{align}
\item \emph{recovery sequence}: there exists a \emph{recovery sequence} $(\widetilde{u}_\varepsilon)_{\varepsilon>0}\subset X$ with $\widetilde{u}_\varepsilon \to u$, for $\varepsilon \to 0$, such that
\begin{align*}
\limsup_{\varepsilon\to 0} F_\varepsilon(\widetilde{u}_\varepsilon) \leq F(u).
\end{align*}
\end{enumerate}
\end{definition}

For a norm $\aninorm$ on $\R^2$, we define, the \emph{perimeter functional}
$$
P_\aninorm(u)\coloneqq \left\{ \begin{array}{ll}
   c \int_{S(u)} \aninorm(\vn_x)\, \mathrm{d}\mathcal{H}_1, & \text{ if } u \in SBV \text{ and } u \in \{0,1\} \text{ a.e.},\\
    \infty, &\text{ otherwise}, \end{array}\right.
$$
where $c \coloneqq \int_{0}^1 \sqrt{\mathcal{W}(s)}\, \mathrm{d} s$, $S(u)$ denotes the jump-set of $u$ (see \cite[Definition 3.91]{ambrosio2000functions}) and $\vn_x$ is the measure theoretic outer normal of $S(u)$ at $x$ (see \cite[Definition 3.54]{ambrosio2000functions} for a definition). As was already previously announced, we have that $\mathrm{GL}_\varepsilon^{\aninorm}$ $\Gamma$-converges to $P_\aninorm$ by \cite[Theorem 4.13]{braides1998approximation}.

Next, we aim at constructing a corresponding shearlet-based Ginzburg--Landau energy. We begin by placing some assumptions on the underlying shearlet systems and the weight $\omega$.
\begin{assumption}\label{assump:1}
We require the generator function $\psi\in L^2(\R^2)$ to satisfy \eqref{eq:TheAssumption} and
$\suppp \psi \subset [0,1]^2$.
Additionally, let $\mathfrak{s}^*>0$ and $\mathfrak{a}^*>0$ be such that the shearlet transform with $\mathfrak{s}>\mathfrak{s}^*$ and $\mathfrak{a}>\mathfrak{a}^*$ satisfies \eqref{eq:R2EquivalenceShearlets}.
\end{assumption}
For any norm $\aninorm$ in $\R^2$, we say that $\omega$ is a \emph{directional weight associated with $\aninorm$} if 
\begin{align}\label{eq:associatedNormDefinition}
 \max\left\{|\vn_1|, |\vn_2|\right\}\left(|\vn_1| \omega\left(1, \frac{\vn_2}{\vn_1}\right)^2 + |\vn_2|\omega\left(-1, \frac{\vn_1}{\vn_2}\right)^2 \right) &= \aninorm( \vn )^2, \quad \text{ for all } \vn \in \mathbb{S}^1.
\end{align}
where we define $\omega(\pm 1, \pm \infty) \coloneqq 0$. 

Given $\aninorm$ and $\mathfrak{s}>\mathfrak{s}^* > 0$ there always exists a canonical choice of $\omega$ so that $\omega$ satisfies the conditions of Definition \ref{def:directionalWeight}. The construction of such an $\omega$ is slightly technical and is therefore deferred to Appendix \ref{app:canonicalWeight}.

For a given norm and an appropriate directional weight, we can define the following space of functions.

\begin{definition}
Let $\aninorm$ be a norm on $\R^2$ and let $|\cdot|_{\Bp}$ be a shearlet-based Besov seminorm from a weighted shearlet system with directional weight $\omega$, where $\omega$ is a directional weight associated with $\aninorm$. We define, for $U(x) \coloneqq x/\log_2(2 + x)$, the set
$$
\mathcal{B}_{\aninorm} \coloneqq \left\{u \in H^1(\aninorm): |u|_{\Bp}^2 \geq \int_{(0,1)^2} \aninorm\left(\nabla u(x)\right)^2 \, \,\mathrm{d}x  - U\left(|u|_{H^1((0,1)^2)}^2\right)
\right\}.
$$
\end{definition}
$\mathcal{B}_{\aninorm}$ is the set of functions that we will use to define the shearlet-based Ginzburg--Landau energy.
Admittedly, the definition of $\mathcal{B}_{\aninorm}$ is very specific to the shearlet system.
Nonetheless, we will observe that $\mathcal{B}_{\aninorm}$ contains all functions that admit a smooth phase-transition
along a polygonal curve as well as smooth perturbations of such functions.
For a more detailed analysis of this set, we refer to Subsection \ref{sec:TheSetB}.
\begin{definition} \label{def:ShearBasedGizLandEnergy}
Let $\aninorm$ be a norm on $\R^2$ and let $|\cdot|_{\Bp}$ be a shearlet-based Besov seminorm from a weighted shearlet system with weight $\omega$, where $\omega$ is a directional weight associated with $\aninorm$. We define, for $u \in BV$,
\begin{align} \label{eq:ShearBasedFunctional}
\mathrm{{SGL}}^\omega_\varepsilon(u) \coloneqq \left\{ \begin{array}{l l}
   \varepsilon| u |_{\Bp}^2 + \frac{1}{4\varepsilon}\int_{(0,1)^2}\mathcal{W}(u)(x) \, \,\mathrm{d}x, & \text{ if } u \in \mathcal{B}_\aninorm,\\
    \infty, & \text{ if } u \in BV \setminus \mathcal{B}_\aninorm. \end{array}\right.
\end{align}
\end{definition}
It turns out that the sequence of energies $\mathrm{SGL}^\omega_\varepsilon$ described above $\Gamma$-converges
to a perimeter functional with norm $\aninorm$, where $\aninorm$ is such that $\omega$ is the associated directional weight.

\begin{theorem}\label{thm:main}
Let $\aninorm$ be a norm on $\R^2$ and let $|\cdot|_{\Bp}$ be a shearlet-based Besov seminorm with a shearlet system satisfying Assumption \ref{assump:1}, where $\psi = \psi^1 \otimes \phi^1$ and
$$
	\left|\widehat{\phi^1}(\xi)\right| \lesssim (1+|\xi|)^{-4} \quad \text{ and } \quad \left|\widehat{\psi^1}(\xi)\right| \lesssim 		\frac{\min\{ |\xi|, 1\}^4}{(1+|\xi|)^4}, \quad  \text{ for all } \xi \in \R.
$$
Moreover, let the directional weight of $|\cdot|_{\Bp}$ be associated with $\aninorm$.
Then, for all $u \in SBV$, $u(x) \in \{0,1\}$ for almost every $x \in (0,1)^2$, and $(u_\varepsilon)_{\varepsilon>0} \subset H^1((0,1)^2)$ such that $u_\varepsilon \to u$ in $L^1((0,1)^2)$, for $\varepsilon \to 0$, we have
\begin{align*}
	\liminf_{\varepsilon\to 0} \mathrm{SGL}^\omega_\varepsilon(u_\varepsilon) \geq P_\aninorm(u).
\end{align*}
Additionally, for every set $D\subset (0,1)^2$ of finite perimeter we have that there exists a sequence $(\widetilde{u}_\varepsilon)_{\varepsilon>0}\subset H^1_0((0,1)^2)$ such that $\widetilde{u}_\varepsilon \to \chi_D$ in $L^1((0,1)^2)$, for $\varepsilon \to 0$, and
$$
	\lim_{\varepsilon\to 0} \mathrm{SGL}^\omega_\varepsilon(\widetilde{u}_\varepsilon) = P_\aninorm(\chi_D).
$$
\end{theorem}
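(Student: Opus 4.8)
The statement has two parts — a $\liminf$ inequality and the existence of a recovery sequence — and I would attack them separately, since they use very different machinery.

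For the \textbf{$\liminf$ inequality}, the key observation is that the shearlet-based Besov seminorm was \emph{built} to dominate the anisotropic elastic energy: by definition of the set $\mathcal{B}_\Omega$, any admissible competitor $u_\varepsilon \in \mathcal{B}_\Omega$ satisfies $|u_\varepsilon|_{\Bp}^2 \geq \int_{(0,1)^2} \Omega(\nabla u_\varepsilon)^2 \,\mathrm{d}x - U(|u_\varepsilon|_{H^1}^2)$ with $U(x) = x/\log_2(2+x)$ a sublinear correction. Hence $\mathrm{SGL}^\omega_\varepsilon(u_\varepsilon) \geq \mathrm{GL}^\Omega_\varepsilon(u_\varepsilon) - \varepsilon\, U(|u_\varepsilon|_{H^1}^2)$. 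The plan is to reduce to the classical result of \cite[Theorem 4.13]{braides1998approximation} that $\mathrm{GL}^\Omega_\varepsilon \xrightarrow{\Gamma} P_\Omega$, whose $\liminf$ part gives exactly $\liminf_\varepsilon \mathrm{GL}^\Omega_\varepsilon(u_\varepsilon) \geq P_\Omega(u)$. The one subtlety is controlling the error term $\varepsilon\, U(|u_\varepsilon|_{H^1}^2)$: along a sequence where $|u_\varepsilon|_{H^1}^2$ stays bounded this term vanishes; along a sequence where it blows up, one has to exploit the sublinearity of $U$ against the fact that $\varepsilon |u_\varepsilon|_{H^1}^2$ itself cannot go to $+\infty$ (otherwise $\mathrm{GL}^\Omega_\varepsilon(u_\varepsilon)\to\infty$ and there is nothing to prove) — so $\varepsilon\,U(|u_\varepsilon|_{H^1}^2) = \varepsilon|u_\varepsilon|_{H^1}^2 / \log_2(2 + |u_\varepsilon|_{H^1}^2) \to 0$ because the numerator is bounded and the denominator tends to infinity. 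Passing to subsequences to separate these two regimes, the $\liminf$ inequality follows.

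For the \textbf{recovery sequence}, let $D \subset (0,1)^2$ have finite perimeter. The strategy is the standard optimal-profile construction, combined with the seminorm-equivalence machinery of the paper. First approximate $\chi_D$ in $L^1$ and in perimeter by characteristic functions $\chi_{D_k}$ of sets with polygonal (or smooth) boundary, using lower semicontinuity of $P_\Omega$ and a diagonal argument to reduce to that case — this is where Subsection \ref{sec:recSequenceForFinitePerimeter} (referenced in the introduction) is invoked, and where membership of the constructed functions in $\mathcal{B}_\Omega$ is secured (the paper explicitly states $\mathcal{B}_\Omega$ contains smooth phase-transitions along polygonal curves and their perturbations). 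Then, for a polygonal $D$, one builds $\widetilde u_\varepsilon$ by pasting in, in the $\varepsilon$-neighbourhood of $\partial D$, the one-dimensional optimal profile $q$ solving $q' = \sqrt{\mathcal{W}(q)}$ transverse to the interface, rescaled at width $\propto \varepsilon$, cut off to vanish near $\partial(0,1)^2$ so that $\widetilde u_\varepsilon \in H^1_0((0,1)^2)$. The classical computation gives $\mathrm{GL}^\Omega_\varepsilon(\widetilde u_\varepsilon) \to c\int_{\partial D}\Omega(\vn)\,\mathrm{d}\mathbb{H}_1 = P_\Omega(\chi_D)$, and on $\mathcal{B}_\Omega$ one also needs the matching \emph{upper} bound $|\widetilde u_\varepsilon|_{\Bp}^2 \leq \int_{(0,1)^2}\Omega(\nabla \widetilde u_\varepsilon)^2\,\mathrm{d}x + o(1/\varepsilon)$, i.e. the reverse of the defining inequality of $\mathcal{B}_\Omega$ asymptotically. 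This is exactly where the refined localisation argument — showing that a reweighted cone-adapted shearlet transform is asymptotically equivalent to the homogeneous (isometric) one, as advertised in the introduction — enters, together with the decay hypotheses $|\widehat{\phi^1}(\xi)|\lesssim(1+|\xi|)^{-4}$ and $|\widehat{\psi^1}(\xi)|\lesssim \min\{|\xi|,1\}^4/(1+|\xi|)^4$, which guarantee enough regularity/vanishing moments for the error terms in the localisation to be controlled. Combining the two bounds, $\mathrm{SGL}^\omega_\varepsilon(\widetilde u_\varepsilon) = \varepsilon|\widetilde u_\varepsilon|_{\Bp}^2 + \frac1{4\varepsilon}\int\mathcal{W}(\widetilde u_\varepsilon) \to P_\Omega(\chi_D)$ (the lower bound for $\limsup$ being automatic from the $\liminf$ part applied to this very sequence, which also converges in $L^1$ to $\chi_D$).

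The \textbf{main obstacle} is the upper bound $|\widetilde u_\varepsilon|_{\Bp}^2 \leq \int_{(0,1)^2}\Omega(\nabla\widetilde u_\varepsilon)^2\,\mathrm{d}x + o(\varepsilon^{-1})$ together with the verification $\widetilde u_\varepsilon \in \mathcal{B}_\Omega$: unlike the wavelet case, the cone-adapted shearlet system is only a frame, not a tight frame, so one cannot simply expand and use Parseval. The resolution — and the technical heart of the paper — is the spatial and directional localisation: near a smooth piece of $\partial D$ with normal direction $\vn$, only shearlet elements $\psi^\omega_{a,s,t,\iota}$ whose orientation parameter $s$ is close to the slope determined by $\vn$ contribute at leading order, and on that localised set the transform behaves (as $\varepsilon\to 0$, i.e. as fine scales dominate) like the homogeneous shearlet transform, which by Theorem \ref{thm:HomFrameL2} and the computation following \eqref{eq:TheAssumption} is an isometry reproducing exactly $\|\xi_1\widehat{(\cdot)}\|_{L^2}^2$ per direction. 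Tracking the directional weight $\omega$ through this limit is precisely what produces the factor $\max\{|\vn_1|,|\vn_2|\}(\vn_1\omega(1,\vn_2/\vn_1)^2 + \vn_2\omega(-1,\vn_1/\vn_2)^2) = \Omega(\vn)^2$, so that the localised Besov energy converges to $\int_{\partial D}\Omega(\vn)^2\cdot(\text{profile factor})$, matching the anisotropic elastic energy of $\widetilde u_\varepsilon$; all the boundary, cross-term, and coarse-scale ($K_t$) contributions must be shown to be lower-order, which is where the decay assumptions on $\widehat{\phi^1}$ and $\widehat{\psi^1}$ and the periodisation estimates from Proposition \ref{prop:equivalence} do the bookkeeping.
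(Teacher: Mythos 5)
Your plan follows essentially the same route as the paper: the liminf inequality is read off from the defining inequality of $\mathcal{B}_\Omega$ together with the classical $\Gamma$-convergence of $\mathrm{GL}^\Omega_\varepsilon$ (Proposition \ref{prop:limInf}), and the recovery sequence is obtained from optimal profiles along polygonal interfaces, whose Besov seminorm is matched to the anisotropic elastic energy by spatial and directional localisation to the homogeneous, isometric shearlet transform (Propositions \ref{prop:HeavisideConvergence}, \ref{prop:PolygonConvergence} and \ref{prop:PieceSmoothConvergence}), followed by density of polygons and lower semicontinuity of $P_\Omega$. The one spot to tighten is the regime $\varepsilon|u_\varepsilon|_{H^1}^2\to\infty$ in the liminf argument, where ``$\mathrm{GL}^\Omega_\varepsilon(u_\varepsilon)\to\infty$, nothing to prove'' is not by itself sufficient --- you must still conclude $\mathrm{SGL}^\omega_\varepsilon(u_\varepsilon)\to\infty$, which does follow in one line from the sublinearity you invoke, since $\varepsilon U(|u_\varepsilon|_{H^1}^2)=o\big(\varepsilon|u_\varepsilon|_{H^1}^2\big)$ is dominated by $\varepsilon\int_{(0,1)^2}\Omega(\nabla u_\varepsilon)^2\,\mathrm{d}x\gtrsim\varepsilon|u_\varepsilon|_{H^1}^2$ (the paper instead disposes of this regime via the seminorm equivalence of Proposition \ref{prop:equivalence} and a case distinction on $\|u_\varepsilon\|_{L^2}$).
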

\begin{proof}
 We shall develop the proof in Section \ref{sec:proofOfThmMain} of the paper. The liminf condition will be verified in Proposition \ref{prop:limInf} and the existence of a recovery sequence will be demonstrated in Proposition \ref{prop:PieceSmoothConvergence}.
\end{proof}

\section{Proof of Theorem \ref{thm:main}} \label{sec:proofOfThmMain}

\subsection{The liminf condition}

The definitions of $\mathcal{B}_{\aninorm}$ and $\mathrm{SGL}^\omega_\varepsilon$ immediately allow us to obtain the liminf condition, equation \eqref{eq:EverySequence}, for the shearlet-based Ginzburg--Landau energy.
\begin{proposition}\label{prop:limInf}
Let $(u_\varepsilon)_{\varepsilon>0} \subset H^1((0,1)^2)$, $u\in SBV$, $u(x) \in \{0,1\}$ for almost every $x \in (0,1)^2$, and let $u_\varepsilon \to u$ in $L^1((0,1)^2)$, for $\varepsilon \to 0$; then,
\begin{align}\label{eq:TheLimInfCondition}
\liminf_{\varepsilon>0} \mathrm{SGL}^\omega_\varepsilon(u_\varepsilon)  \geq P_\aninorm(u).
\end{align}
\end{proposition}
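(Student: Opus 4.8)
The plan is to exploit that the definition of $\mathcal{B}_\Omega$, and hence of $\mathrm{SGL}^\omega_\varepsilon$, has been arranged so that the shearlet energy dominates the classical anisotropic Ginzburg--Landau energy $\mathrm{GL}_\varepsilon^\Omega$ up to an asymptotically negligible correction, and then to invoke the $\Gamma$-$\liminf$ inequality for $\mathrm{GL}_\varepsilon^\Omega$ from \cite[Theorem 4.13]{braides1998approximation}. If $\liminf_{\varepsilon\to 0}\mathrm{SGL}^\omega_\varepsilon(u_\varepsilon)=\infty$ there is nothing to prove; otherwise I would pass to a sequence $\varepsilon_k\downarrow 0$ realising the liminf and along which $\mathrm{SGL}^\omega_{\varepsilon_k}(u_{\varepsilon_k})$ is bounded. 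Then $u_{\varepsilon_k}\in\mathcal{B}_\Omega$, and since $\mathcal{W}\ge 0$ the inequality defining $\mathcal{B}_\Omega$ gives, for each $k$,
\[
\mathrm{SGL}^\omega_{\varepsilon_k}(u_{\varepsilon_k})=\varepsilon_k|u_{\varepsilon_k}|_{\Bp}^2+\frac{1}{4\varepsilon_k}\int_{(0,1)^2}\mathcal{W}(u_{\varepsilon_k})\ \geq\ \mathrm{GL}_{\varepsilon_k}^\Omega(u_{\varepsilon_k})-\varepsilon_k\,U\!\left(|u_{\varepsilon_k}|_{H^1}^2\right).
\]

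The heart of the proof, which I expect to be the only real obstacle, is to show $\varepsilon_k\,U(|u_{\varepsilon_k}|_{H^1}^2)\to 0$. Since $\Omega$ is a norm on $\R^2$ one has $\Omega(v)\ge c_\Omega|v|$ with $c_\Omega:=\min_{v\in\Sh^1}\Omega(v)>0$, hence $\int_{(0,1)^2}\Omega(\nabla u)^2\ge c_\Omega^2|u|_{H^1}^2$. Writing $a_k:=|u_{\varepsilon_k}|_{H^1}^2$ and combining the $\mathcal{B}_\Omega$-inequality with $\varepsilon_k|u_{\varepsilon_k}|_{\Bp}^2\le\mathrm{SGL}^\omega_{\varepsilon_k}(u_{\varepsilon_k})=O(1)$ gives $\varepsilon_k\big(c_\Omega^2 a_k-U(a_k)\big)=O(1)$. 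Here the sub-linearity of the gauge is essential: as $U(x)=x/\log_2(2+x)=o(x)$, there is a fixed $R_0$ with $c_\Omega^2 a-U(a)\ge\tfrac{c_\Omega^2}{2}a$ for all $a\ge R_0$, so $(\varepsilon_k a_k)_k$ is bounded, say $\varepsilon_k a_k\le M$. Then $\varepsilon_k U(a_k)=\varepsilon_k a_k/\log_2(2+a_k)\le M/\log_2(2+a_k)$, and a short subsequence argument (the cases $a_k\to\infty$, where the bound tends to $0$, and $a_k$ bounded, where $\varepsilon_k U(a_k)\to 0$ because $U$ is increasing and $\varepsilon_k\to 0$) shows $\varepsilon_k U(a_k)\to 0$.

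Assembling the pieces,
\[
\liminf_{\varepsilon\to 0}\mathrm{SGL}^\omega_\varepsilon(u_\varepsilon)=\lim_{k}\mathrm{SGL}^\omega_{\varepsilon_k}(u_{\varepsilon_k})\ \geq\ \liminf_{k}\Big(\mathrm{GL}_{\varepsilon_k}^\Omega(u_{\varepsilon_k})-\varepsilon_k U(a_k)\Big)=\liminf_{k}\mathrm{GL}_{\varepsilon_k}^\Omega(u_{\varepsilon_k})\ \geq\ P_\Omega(u),
\]
where the first inequality is the termwise bound above, the equality uses $\varepsilon_k U(a_k)\to 0$, and the last inequality is the classical $\Gamma$-$\liminf$ estimate of \cite[Theorem 4.13]{braides1998approximation} applied along $\varepsilon_k\to 0$ with $u_{\varepsilon_k}\to u$ in $L^1$ (recall $u\in SBV$ and $u\in\{0,1\}$ a.e., so the right-hand side is exactly $P_\Omega(u)$). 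Everything other than the gauge estimate in the middle paragraph — which is precisely what the choice $U(x)=x/\log_2(2+x)$ in the definition of $\mathcal{B}_\Omega$ is designed to make work, ruling out a pathological sequence whose $H^1$-seminorm blows up fast enough to keep the correction away from $0$ — is routine bookkeeping.
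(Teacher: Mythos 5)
Your proof is correct, and in its key step it takes a genuinely different route from the paper. Both arguments start from the same observation: membership in $\mathcal{B}_\Omega$ plus $\mathcal{W}\geq 0$ gives $\mathrm{SGL}^\omega_{\varepsilon}(u_{\varepsilon})\geq \mathrm{GL}^\Omega_{\varepsilon}(u_{\varepsilon})-\varepsilon\,U(|u_{\varepsilon}|_{H^1}^2)$, after which one wants to kill the gauge term and invoke the classical $\Gamma$-$\liminf$ inequality. The paper disposes of the bad case $\varepsilon\, U(|u_{\varepsilon}|_{H^1}^2)\not\to 0$ by a contradiction argument: it deduces $\varepsilon|u_\varepsilon|_{H^1}^2\to\infty$, splits according to whether $\|u_\varepsilon\|_{L^2}$ stays bounded, and uses either the Besov--Sobolev seminorm equivalence \eqref{eq:normEqBdDomain} or the growth of the potential term $\int\mathcal{W}(u_\varepsilon)$ to force $\mathrm{SGL}^\omega_\varepsilon(u_\varepsilon)\to\infty$. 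You instead show directly that the gauge term \emph{always} vanishes along any subsequence on which $\mathrm{SGL}^\omega_\varepsilon(u_\varepsilon)$ is bounded, using only the coercivity $\Omega(v)\geq c_\Omega|v|$, the defining inequality of $\mathcal{B}_\Omega$, and the sublinearity $U(x)=o(x)$: these combine to give $\varepsilon_k|u_{\varepsilon_k}|_{H^1}^2=\mathcal{O}(1)$ and hence $\varepsilon_k U(|u_{\varepsilon_k}|_{H^1}^2)\leq M/\log_2(2+|u_{\varepsilon_k}|_{H^1}^2)\to 0$ after the easy dichotomy on whether $|u_{\varepsilon_k}|_{H^1}$ blows up. This is shorter and, notably, it avoids \eqref{eq:normEqBdDomain} altogether --- an advantage, since that equivalence is only proved for $f\in H^1_0((0,1)^2)$ satisfying a smallness condition on $\|f\|_{L^2}$, whereas the sequence in the proposition is merely in $H^1((0,1)^2)$; your route sidesteps this delicate point entirely. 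The only cosmetic gap is that the boundedness of $(\varepsilon_k|u_{\varepsilon_k}|_{H^1}^2)_k$ formally needs the trivial remark that the terms with $|u_{\varepsilon_k}|_{H^1}^2<R_0$ contribute at most $\varepsilon_k R_0$; this is implicit in your case split and harmless.
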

\begin{proof}




There exists a subsequence $(u_{\varepsilon_n})_{n \in \N}$ of $(u_{\varepsilon})_{\varepsilon > 0}$ such that 
\begin{align}\label{eq:SubsequenceArgument}
\lim_{n \to \infty} \mathrm{SGL}^\omega_{\varepsilon_n}(u_{\varepsilon_n}) = \liminf_{\varepsilon \to 0} \mathrm{SGL}^\omega_\varepsilon(u_\varepsilon).
\end{align}



If $\liminf_{n  \to \infty} \mathrm{SGL}^\omega_{\varepsilon_n}(u_{\varepsilon_n}) = \infty$, then \eqref{eq:TheLimInfCondition} follows trivially. 
Hence, we assume that 
\begin{align}\label{eq:finitenessAssumptionSubsequence}
\lim_{n \to \infty} \mathrm{SGL}^\omega_{\varepsilon_n}(u_{\varepsilon_n}) < \infty.
\end{align}
We have by the definition of $(\mathrm{SGL}^\omega_\varepsilon)_{\varepsilon>0}$ and \eqref{eq:finitenessAssumptionSubsequence} that there exists an $N_0 \in \N$ such that for every $n \in \N$, $n \geq N_0$, 
\begin{align}
    \mathrm{SGL}^\omega_{\varepsilon_n}(u_{\varepsilon_n})= \varepsilon_n| u_{\varepsilon_n} |_{\Bp}^2 + \frac{1}{4{\varepsilon_n}}\int_{(0,1)^2}\mathcal{W}(u_{\varepsilon_n})(x)\, \mathrm{d}x \geq
     \mathrm{GL}_{\varepsilon_n}(u_{\varepsilon_n}) - {\varepsilon_n} U\left(|u_{\varepsilon_n}|_{H^1((0,1)^2)}^2\right). \label{eq:LowerBoundWithU}
\end{align}
If $(u_{\varepsilon_n})_{n \in \N}$ is such that ${\varepsilon_n} U(|u_{\varepsilon_n}|_{H^1((0,1)^2)}^2) \to 0$ for $n \to \infty$, then, by \eqref{eq:LowerBoundWithU},
\begin{align} \label{eq:LowerBoundByPerimeterFunctional}
\liminf_{n \to \infty} \mathrm{SGL}^\omega_{\varepsilon_n}(u_{\varepsilon_n})
 \geq \liminf_{n \in \N} \mathrm{GL}(u_{\varepsilon_n}) \geq P_\aninorm(u).
\end{align}
If $(u_{\varepsilon_n})_{n \in \N}$ is such that
${\varepsilon_n} U(|u_{\varepsilon_n}|_{H^1((0,1)^2)}^2) \not\to 0$ as $n \to \infty$, then, since $U(x) = o(x)$ for $x \to \infty$, we have that $\limsup_{n \in \N} \varepsilon_n|u_{\varepsilon_n}|_{H^1((0,1)^2)}^2 = \infty$. In the remainder of the proof, we will show that if $\limsup_{n \to \infty} {\varepsilon_n}|u_{\varepsilon_n}|_{H^1((0,1)^2)}^2 = \infty$, then we have that $\lim_{n \to 0} \mathrm{SGL}^\omega_{\varepsilon_n}(u_{\varepsilon_n}) = \infty$, which contradicts \eqref{eq:finitenessAssumptionSubsequence}.

We assume that $\limsup_{n \to \infty} {\varepsilon_n}|u_{\varepsilon_n}|_{H^1((0,1)^2)}^2 = \infty$ and consider two cases:

\textbf{Case 1:} $\limsup_{n\to \infty} \|u_{{\varepsilon_n}}\|_{L^2((0,1)^2)} < \infty$.

In this case, we have that, since $\limsup_{n \to \infty} {\varepsilon_n}|u_{{\varepsilon_n}}|_{H^1((0,1)^2)}^2 = \infty$, there exists a subsequence $(u_{{\varepsilon'_n}})_{n\in \N}$ of $(u_{{\varepsilon_n}})_{n\in \N}$ such that 
$\lim_{n \to \infty}|u_{{\varepsilon'_n}}|_{H^1((0,1)^2)}^2 = \infty$. Therefore, for any $B'>0$, there exists an $N_1\in \N$ such that 
$$
	\sup_{m \in \N} \|u_{{\varepsilon'_m}}\|_{L^2((0,1)^2)} \leq \frac{|u_{{\varepsilon'_n}}|_{H^1((0,1)^2)}^2}{2B'},
$$
for all $n > N_1$. We conclude by the seminorm-equivalence of \eqref{eq:normEqBdDomain}, that 
$$
	\liminf_{n \to \infty} \varepsilon'_n |u_{{\varepsilon'_n}}|_{\Bp}^2 \gtrsim \lim_{n \to \infty} \varepsilon'_n |u_{{\varepsilon'_n}}|_{H^1((0,1)^2)}^2 = \infty,
$$
and hence $\lim_{n \to \infty} \mathrm{SGL}^\omega_{\varepsilon_n}(u_{\varepsilon_n}) = \lim_{n \to \infty} \mathrm{SGL}^\omega_{\varepsilon'_n}(u_{\varepsilon'_n}) = \infty$.

\textbf{Case 2:} $\limsup_{n \to \infty} \|u_{\varepsilon_n}\|_{L^2((0,1)^2)} = \infty$.

In this case, we define $R_{\varepsilon_n} \coloneqq \{ x \in (0,1)^2: |u_{{\varepsilon_n}}(x)|< 2\}$ and obtain that
\begin{align} \label{eq:TheNormThatEqualsInfinity}
\infty = \limsup_{n \to \infty} \|u_{\varepsilon_n}\|_{L^2((0,1)^2)} \leq \limsup_{n \to \infty} \left(\left\|u_{\varepsilon_n}\chi_{R_{\varepsilon_n}}\right\|_{L^2((0,1)^2)} + \left\|u_{\varepsilon_n} \chi_{(0,1)^2 \setminus R_{\varepsilon_n}}\right\|_{L^2((0,1)^2)}\right).
\end{align}
We have that $\limsup_{n \to \infty} \|u_{\varepsilon_n}	\chi_{R_{\varepsilon_n}}\|_{L^2((0,1)^2)}^2 \leq 4 < \infty.$
As a consequence of \eqref{eq:TheNormThatEqualsInfinity}, we therefore conclude that
\begin{align}\label{eq:192874618638}
 \limsup_{n \to \infty} \left\|u_{\varepsilon_n} \chi_{(0,1)^2 \setminus R_{\varepsilon_n}}\right\|_{L^2((0,1)^2)} = \infty,
\end{align}
and hence
\begin{align*}
\int_{(0,1)^2} \mathcal{W}(u_{\varepsilon_n})(x) \, \,\mathrm{d}x &\geq \int_{(0,1)^2 \setminus R_{\varepsilon_n}} u_{\varepsilon_n}(x)^2(1-u_{\varepsilon_n}(x))^2 \, \,\mathrm{d}x \\
&\geq \int_{(0,1)^2 \setminus R_{\varepsilon_n}} |u_{\varepsilon_n}(x)|^2 \, \,\mathrm{d}x \to \infty \text{ for } n \to \infty,
\end{align*}
by \eqref{eq:192874618638}, at least up to a subsequence. This implies that $\lim_{n \to \infty} \mathrm{SGL}^\omega_{\varepsilon_n}(u_{\varepsilon_n}) = \infty$. 
\end{proof}

We see that the definition of $\mathcal{B}_{\aninorm}$ together with the convergence of the classical Ginzburg--Landau energy, directly yields the liminf condition of the $\Gamma$-convergence for $(\mathrm{SGL}_{\varepsilon}^\omega)_{\varepsilon>0}$.
The existence of a recovery sequence, on the other hand, necessitates a more refined analysis of the behaviour of $\mathrm{SGL}^\omega_\varepsilon$ and in particular $| \cdot |_{\Bp}$.
We will start by analysing $| \cdot |_{\Bp}$ for characteristic functions of sets with linear boundaries, then extend the estimate to characteristic sets of polygons and finally use a density argument to conclude the
behaviour for characteristic functions of sets of finite perimeter.

\subsection{Estimating the shearlet-based Ginzburg--Landau energy for Heaviside functions}
Let $H \coloneqq \chi_{\R^- \times \R}$ be the \emph{vertical Heaviside function} and $\widetilde{H}\coloneqq \chi_{\R \times \R^-}$ be the \emph{horizontal Heaviside function}. We define for $\vec{n} \in \mathbb S^1$ the \emph{Heaviside function with normal $\vec{n}$} by
\begin{align*}
H^{\vec{n}}(x) \coloneqq \left \{ \begin{array}{cc}
    H(S_{\vn_2/\vn_1}x), & \text{ if } \left|\frac{\vn_2}{\vn_1}\right| \leq 1 , \\
    \widetilde{H}({S}_{\vn_1/\vn_2}^Tx), & \text{ if } \left|\frac{\vn_1}{\vn_2}\right| < 1,
\end{array} \right. \text{ for } x \in \R^2.
\end{align*}
One readily verifies that $H^{\vec{n}}$ is well-defined and has a jump across a line through the origin with normal $\vec{n}$. Note, that if $\vn_2/\vn_1>0$, then $H(S_{\vn_2/\vn_1}x) = \chi_{x_1+ \vn_2/\vn_1 x_2 \leq 0} = \widetilde{H}(S^T_{\vn_1/\vn_2} x)$.
If $\vn_2/\vn_1<0$, then we instead have that $H(S_{\vn_2/\vn_1}x) = \chi_{x_1+ \vn_2/\vn_1 x_2 \leq 0} = 1-\widetilde{H}(S^T_{\vn_1/\vn_2} x)$ almost everywhere.
In the sequel, we always analyse inner products of $H^{\vec{n}}(x)$ with functions $\psi$ that have vanishing moments and by the previous considerations we will always have
$\langle H(S_{\vn_2/\vn_1}), \psi \rangle = \langle \widetilde{H}(S^T_{\vn_1/\vn_2} \cdot ), \psi \rangle$ whenever $\vn_1 \neq 0$ and $\vn_2 \neq 0$.
Additionally, we will be interested in functions that admit a smooth sharp transition across a line.
A function $\Xi: [-1/2,1/2] \to [-1/2,1/2]$ is called a \emph{transition profile} if
$$
\Xi \in C^1\left(\left[-\frac12,\frac12\right]\right), \quad \Xi\left(-\frac12\right) = -\frac12, \quad \text{  and } \quad \Xi\left(\frac12\right) = \frac12.
$$
Prototypes of functions with smooth sharp transitions across a line can now be defined as follows: for $\varepsilon>0$, $\rho > 0$, and $x \in \R^2$ let 
\begin{align*}
H_{\varepsilon, \Xi, \rho}(x)\coloneqq &\left\{ \begin{array}{ll}
    1, & \text{ if } x_1 < - \frac{\rho\varepsilon}{2}, \\
    \frac{1}{2}  - \Xi\left(\frac{x_1}{\rho \varepsilon}\right),& \text{ if } -\frac{\rho\varepsilon}{2} \leq x_1 \leq \frac{\rho\varepsilon}{2},\\
    0, & \text{ if }  \frac{\rho\varepsilon}{2}< x_1, \\
\end{array}\right.  \\
\widetilde{H}_{\varepsilon, \Xi, \rho}(x)\coloneqq& \left\{ \begin{array}{ll}
    1, & \text{ if } x_2 < - \frac{\rho\varepsilon}{2}, \\
    \frac{1}{2}  - \Xi\left(\frac{x_2}{\rho \varepsilon}\right),& \text{ if } - \frac{\rho\varepsilon}{2} \leq x_2 \leq  \frac{\rho\varepsilon}{2},\\
    0, & \text{ if }  \frac{\rho\varepsilon}{2}< x_2. \\
\end{array}\right.
\end{align*}
In the definition of $H_{\varepsilon, \Xi, \rho}$ above, $\varepsilon$ and $\rho$ have practically the same role. However, when we study the $\Gamma$-convergence of energies applied to these functions, we will later use $\varepsilon$ 
tied to the corresponding $\varepsilon$ parameter of the energies $(SGL_\varepsilon^\omega)_{\varepsilon >0}$ and use $\rho$ as a fine-tuning parameter.

In the sequel, we also need to analyse functions with smooth transitions
accros lines that are not parallel to the horizontal or vertical axes.
To model this, we rotate the functions $H_{\varepsilon, \Xi, \rho}$ and $\widetilde{H}_{\varepsilon, \Xi, \rho}$ described above, but to make the analysis in the sequel simpler,
we present a construction based on shearing instead of rotation.
We define, for $x \in \R^2$,
\begin{align*}
H_{\varepsilon, \Xi, \rho}^{\vn, 1}(x)&\coloneqq H_{\varepsilon, \Xi, \rho/{|\vn_1|}}( S_{\vn_2/\vn_1}x ), \text{ if } \vn_1 \neq 0, \quad \text{ and } \quad
H_{\varepsilon, \Xi, \rho}^{\vn, -1}(x)\coloneqq \widetilde{H}_{\varepsilon, \Xi, \rho/{|\vn_2|}}\left( S^T_{\vn_1/\vn_2}x \right)\text{ if } \vn_2 \neq 0.
\end{align*}
Clearly, $H_{\varepsilon, \Xi, \rho}^{\vn, 1}$ and $H_{\varepsilon, \Xi, \rho}^{\vn, -1}$ are functions that are constant along directions perpendicular to $\vn$ and behave like $\Xi(\cdot/(\rho \varepsilon))$ or $\Xi(- \cdot/(\rho \varepsilon))$
along $\vn$. This implies that for all functions $\psi$ with a vanishing moment we have
\begin{align*}
\left\langle H_{\varepsilon, \Xi, \rho}^{\vn, 1}, \psi \right\rangle  = \left\langle 1-H_{\varepsilon, \widetilde{\Xi}, \rho}^{\vn, -1}, \psi \right\rangle = \left\langle H_{\varepsilon, \widetilde{\Xi}, \rho}^{\vn, -1}, \psi \right\rangle,
\end{align*}
as long as $\vn_1 \neq 0$ and $\vn_2 \neq 0$, where $\widetilde{\Xi} = \Xi$ or $\widetilde{\Xi} = 1-\Xi(-\cdot)$ depending on $\vn$. The exact form of $\widetilde{\Xi}$ will never appear in the sequel. The only relation needed is that
$$
\left|H_{\varepsilon, \widetilde{\Xi}, \rho}^{\vn, -1}\right|_{H^1((-r,r)^2)} = \left|H_{\varepsilon, \Xi, \rho}^{\vn, -1}\right|_{H^1((-r,r)^2)}.
$$
Finally, we define, for $x \in \R^2$,
\begin{align}\label{def:PhaseTrans}
    H_{\varepsilon, \Xi, \rho}^{\vn}(x) \coloneqq  \left \{ \begin{array}{cc}
    H_{\varepsilon, \Xi, \rho}^{\vn, 1}(x),  & \text{ if } \left|\frac{\vn_2}{\vn_1}\right| \leq 1 , \\
    H_{\varepsilon, \Xi, \rho}^{\vn, -1}(x), & \text{ if } \left|\frac{\vn_1}{\vn_2}\right| < 1.
\end{array} \right.
\end{align}
It is not hard to see, considering the length of the jump curve and the rotation invariance of the $H^1$ seminorm, that
\begin{align}
\left|H_{\varepsilon,\Xi, \rho}^\vn\right|_{H^1((-r,r)^2)}^2 =& \ \sqrt{\left(1+ \min\left\{ \left| \frac{\vn_2}{\vn_1} \right|^2, \left| \frac{\vn_1}{\vn_2} \right|^2\right \} \right)}\left|H_{\varepsilon, \Xi, \rho}^{(1,0)}\right|_{H^1((-r,r)^2)}^2 + \mathcal{O}(\varepsilon), \text{ for }\varepsilon \to 0.\label{eq:rotation}
\end{align}
Additionally, we observe that, for $\lambda\neq 0 $ and $\varepsilon>0$ sufficiently small
\begin{align}\label{eq:rescaling}
\left|H_{\varepsilon, \Xi, \rho/{|\lambda|}}^{(1,0)}\right|_{H^1((-r,r)^2)}^2 =  |\lambda| \left|H_{\varepsilon, \Xi, \rho}^{(1,0)}\right|_{H^1((-r,r)^2)}^2.
\end{align}

Understanding the asymptotic behaviour of the shearlet-based Besov seminorm of $H_{\varepsilon,\Xi, \rho}^\vn$
requires a bit more technical work.
We will observe below that the shearlet-based Besov seminorm of $H_{\varepsilon,\Xi, \rho}^\vn$
is asymptotically equivalent to the $H^1$ seminorm of $H_{\varepsilon,\Xi, \rho}^\vn$.
We study the nonperiodic Besov functional here because
we will later only apply the following proposition to analyse functions supported away
from the boundary of our domain. 

\begin{proposition}\label{prop:HeavisideConvergence}
Let $\psi \in L^2(\R^2)$, $\mathfrak{s} >\mathfrak{s}^*>0$, and $\mathfrak{a} >\mathfrak{a}^*>0$ satisfy Assumption \ref{assump:1} and let $\psi(x_1,x_2) = \psi^1(x_1)\phi^1(x_2)$, for all $(x_1,x_2) \in \R^2$ and
\begin{align}\label{eq:theDecayAssumption}
|\widehat{\phi^1}(\xi)| \lesssim (1+|\xi|)^{-4}\quad \text{ and } \quad |\widehat{\psi^1}(\xi)| \lesssim \min\left\{ |\xi|, 1\right\}^4/(1+|\xi|)^4, \quad \text{ for all } \xi \in \R.
\end{align}
Let $\omega$ be a directional weight associated with a norm $\mathcal{N}$ and let $r\in (0,1/2)$, $\rho >0$.
Then, we have that, for all $\varepsilon > 0$ with $r/((\mathfrak{s} + 2)\rho) > \varepsilon$:
\begin{align}\label{eq:PropHeavisideEquation}
|H_{\varepsilon, \Xi, \rho}^\vn|_{B, [-r,r]^2}^2 = & \ \aninorm(\vn)^2 \left|H_{\varepsilon, \Xi, \rho}^\vn\right|_{H^1((-r,r)^2)}^2\\
&\qquad +  r \, o\left(\frac{1}{{\varepsilon}}\log_2\left(\frac{1}{{\varepsilon}}\right)^{-1}\right) + o\left(\frac{1}{\sqrt{\varepsilon}}\right) + \mathcal{O} \left(\frac{1}{(r- \rho \varepsilon(\mathfrak{s} + 2))^4} \right), \nonumber
\end{align}
where the implied constant in the first asymptotic term depends quadratically
on $1+\|\Xi'\|_{\infty}/\rho$, and the implied constants in the second and third asymptotic terms
depend quadratically on $1+\|\Xi'\|_{\infty}$.
\end{proposition}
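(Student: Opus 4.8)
The plan is to compute the shearlet-based Besov functional $|H^{\vn}_{\varepsilon,\Xi,w}|^2_{B,[-r,r]^2}$ directly from the definition by splitting the translation integral over $[-r,r]^2$ into a "bulk" region, where the shearlet element $\psi^{\omega}_{a,s,t,\iota}$ sees only the linear phase transition of $H^{\vn}_{\varepsilon,\Xi,w}$, and a "boundary layer" region near $\partial([-r,r]^2)$, where the shearlet sees the truncation of the domain. In the bulk, $H^{\vn}_{\varepsilon,\Xi,w}$ agrees (on the support of $\psi^{\omega}_{a,s,t,\iota}$, which lies in $B_R(t)$ by Assumption~\ref{assump:1} and the support bound from Proposition~\ref{prop:equivalence}) with the globally-defined sheared phase transition, so the inner products there are exactly those of the $\R^2$-theory; in the boundary layer the contribution is $\mathcal O((r-w\varepsilon(\Delta+2))^{-4})$, which will account for the last error term. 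Concretely, I would first reduce to the case $\vn = (1,0)$ (using \eqref{eq:rotation} and \eqref{eq:rescaling}, plus the shear-invariance identity for functions with vanishing moments established just before the proposition, so that the weighted sum over $\iota=\pm1$ reproduces the factor $\max\{|\vn_1|,|\vn_2|\}(\vn_1\omega(1,\vn_2/\vn_1)^2 + \vn_2\omega(-1,\vn_1/\vn_2)^2) = \Omega(\vn)^2$), so the whole argument comes down to analysing $|H_{\varepsilon,\Xi,w}|^2_{B,[-r,r]^2}$ for the vertical smoothed Heaviside.

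For the vertical case, the key computation is a Fourier-side / scaling analysis of $\langle H_{\varepsilon,\Xi,w}, \psi_{a,s,t}\rangle$. Since $H_{\varepsilon,\Xi,w}$ depends only on $x_1$ and equals $\tfrac12 - \Xi(x_1/(w\varepsilon))$ on the transition strip, its distributional derivative $\partial_{x_1} H_{\varepsilon,\Xi,w} = -\tfrac1{w\varepsilon}\Xi'(x_1/(w\varepsilon))$ is an $L^2$ function of total "mass" controlled by $\|\Xi'\|_\infty/w$ and scale $w\varepsilon$. The natural move is to write $\langle H_{\varepsilon,\Xi,w},\psi_{a,s,t}\rangle$ in terms of $\langle \partial_{x_1}H_{\varepsilon,\Xi,w}, \mu_{a,s,t}\rangle$ via the relation $\widehat\mu(\xi) = \widehat\psi(\xi)/\xi_1$ and $C_\mu = (2\pi)^2$ introduced in the discussion around \eqref{eq:TheAssumption}; this is precisely the device that converts the $a^{-2}$-weighted shearlet $L^2$-norm into an $H^1$-type quantity. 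Applying the $H^1$ norm-equivalence \eqref{eq:embeddingH1R2}/\eqref{eq:R2EquivalenceShearlets} to $\partial_{x_1}H_{\varepsilon,\Xi,w}$ (or rather, tracking the exact constant rather than just equivalence, using that the generator is separable with the prescribed decay $|\widehat{\phi^1}(\xi)|\lesssim(1+|\xi|)^{-4}$, $|\widehat{\psi^1}(\xi)|\lesssim\min\{|\xi|,1\}^4/(1+|\xi|)^4$) gives the leading term $\Omega(\vn)^2|H^{\vn}_{\varepsilon,\Xi,w}|^2_{H^1((-r,r)^2)}$. The three error terms then come from: (i) the discrepancy between the $a^{-2}$-weighted shearlet functional and the exact $H^1$ seminorm of the strip-supported profile — this is where the rate $o(\varepsilon^{-1}\log_2(1/\varepsilon)^{-1})$ arises, since $|H^{\vn}_{\varepsilon,\Xi,w}|^2_{H^1}\sim \varepsilon^{-1}$ and the mismatch is lower order by a logarithmic factor coming from the $\int_0^\Gamma a^{-3}\,\mathrm da$ scale integral against a profile concentrated at scale $w\varepsilon$; (ii) the $K$-term $\int|\langle f,K_t\rangle|^2\,\mathrm dt \le B\|f\|_{L^2}^2$ from \eqref{eq:estimateOfKTerm}, which contributes $o(\varepsilon^{-1/2})$ since $\|H^{\vn}_{\varepsilon,\Xi,w}\|_{L^2([-r,r]^2)}$ is bounded but the relevant localized piece scales like $\varepsilon^{-1/2}$ at worst; (iii) the boundary truncation term $\mathcal O((r-w\varepsilon(\Delta+2))^{-4})$ from the finitely many shearlet scales/shears whose support $B_R(t)$ straddles $\partial([-r,r]^2)$, estimated using the decay of $\psi$ and the crude bound $|\langle H^{\vn}_{\varepsilon,\Xi,w},\psi_{a,s,t,\iota}\rangle|\lesssim\|\psi\|_{L^1}$ together with the fact that $H^{\vn}_{\varepsilon,\Xi,w}\in\{0,1\}$ away from a strip of width $O(\varepsilon)$.

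The main obstacle, I expect, will be step (i): obtaining not merely the norm-equivalence \eqref{eq:embeddingH1R2} but the \emph{exact} leading coefficient $\Omega(\vn)^2$ with a quantified, logarithmically-improved remainder. This requires exploiting the exact normalisation \eqref{eq:TheAssumption}, $\int_{\R^2}|\widehat\psi(\xi)|^2/|\xi_1|^4\,\mathrm d\xi = (2\pi)^2$, to see that on the \emph{full} parameter range $(a,s)\in\R^+\times\R$ one gets exactly $(2\pi)^2\|\xi_1\widehat f\|_{L^2}^2$ (as displayed after \eqref{eq:TheAssumption}), and then estimating the error incurred by restricting to $a\in(0,\Gamma]$, $s\in[-\Delta,\Delta]$ — i.e. the tail $\int_\Gamma^\infty\!\int_{|s|>\Delta}$ — applied to $\partial_{x_1}H^{\vn}_{\varepsilon,\Xi,w}$, whose Fourier transform is $\widehat{\Xi'}(w\varepsilon\,\xi_1)$ up to constants and hence concentrated near $|\xi_1|\lesssim 1/(w\varepsilon)$. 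The decay hypotheses on $\widehat{\phi^1},\widehat{\psi^1}$ are calibrated precisely so that this tail, after the $a^{-3}$ and directional weighting, is $o(\varepsilon^{-1}/\log_2(1/\varepsilon))$ rather than merely $O(\varepsilon^{-1})$; carefully tracking that the constant is quadratic in $1+\|\Xi'\|_\infty/w$ (and linear-in-$r$ for the bulk term after rescaling the strip) is the bookkeeping-heavy heart of the proof. The $\vn\neq(1,0)$ reduction and the boundary term are comparatively routine once the vertical bulk estimate with its sharp constant is in hand.
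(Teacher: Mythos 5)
Your outline follows the paper's architecture quite closely — partial integration to pass from $H^{\vn}_{\varepsilon,\Xi,w}$ to its derivative paired with $\mu_{a,s,t}$ where $\widehat{\mu}=\widehat{\psi}/\xi_1$, use of the exact normalisation \eqref{eq:TheAssumption} over the full parameter range, tail estimates for the restriction to $a\in(0,\Gamma]$, $s\in[-\Delta,\Delta]$, freezing the directional weight, and a boundary-layer estimate in $t$ for the truncation to $[-r,r]^2$. However, there is one genuine gap at the heart of your step (i). To extract the exact leading coefficient you must apply the isometry of the homogeneous shearlet transform, which is a statement about $L^2(\R^2)$ functions with the $t$-integral taken over all of $\R^2$ and $(a,s)$ over $\R^+\times\R$. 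The function you would feed into it, $\partial_{x_1}H^{\vn}_{\varepsilon,\Xi,w}$ extended to $\R^2$, is supported on an infinite strip and is constant in the transverse direction, hence is \emph{not} in $L^2(\R^2)$. Saying that "in the bulk the inner products are exactly those of the $\R^2$-theory" does not resolve this: the individual inner products are fine, but Parseval is applied to the integral over all $t$, and there is no $L^2$ function whose norm it returns. The paper's proof explicitly flags this obstacle and resolves it with a blow-up/averaging device: it multiplies the derivative profile by a mollified indicator of a long interval in $x_2$ to obtain $f_N\in L^2(\R^2)$, shows the restricted functional equals $\frac1N$ times the full-range functional of $f_N$ up to controlled errors, and then lets $N\to\infty$. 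Without this (or an equivalent substitute), your plan cannot legitimately invoke \eqref{eq:TheAssumption} to produce the sharp constant.

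Two further points of bookkeeping are off. First, the functional $|\cdot|_{B,[-r,r]}$ contains no low-pass term, so your attribution of the $o(\varepsilon^{-1/2})$ error to "the $K$-term" via \eqref{eq:estimateOfKTerm} is based on a misreading of the definition; in the paper this error arises from discarding the shear parameters with $|s|>\nu_s=\varepsilon^{1/11}$ (where the decay of $\widehat{\phi^1}$ yields a bound $\lesssim \nu_s^{-5}$) and from extending the $(a,s)$-domain afterwards. Second, the $r\,o(\varepsilon^{-1}\log_2(1/\varepsilon)^{-1})$ term does not come from a scale-integral mismatch against the profile at scale $w\varepsilon$; it comes from replacing the Lipschitz weight $\omega(1,s-\theta)$ by the constant $\omega(1,-\theta)$ over the shrinking shear window $|s|\le\varepsilon^{1/11}$, which costs $\nu_s\cdot\|h\|^2_{L^2}\lesssim \varepsilon^{1/11}\cdot r\varepsilon^{-1}$. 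These misidentifications matter because the whole point of the proposition is the precise hierarchy of error rates, and your plan as written would not produce the stated exponents.
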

\begin{proof}
The proof proceeds in several steps.

\step{1}{Splitting into cones}
First, we split the shearlet-based Besov-functional into two parts, each associated to one of the cones of the shearlet system:
\begin{align*}
\mathrm{I}_1^{\varepsilon, \rho, \omega, \vn} \coloneqq& \  \int_0^\mathfrak{a} \int_{-\mathfrak{s}}^{\mathfrak{s}} \int_{[-r,r]^2} a^{-2} \left|\left\langle H_{\varepsilon, \Xi, \rho}^\vn, \psi_{a,s,t,1}^\omega \right\rangle\right|^2 a^{-3} \, \, \,\mathrm{d}t \, \mathrm{d}s \, \mathrm{d}a, \\
\mathrm{I}_{-1}^{\varepsilon, \rho, \omega, \vn} \coloneqq& \  \int_0^\mathfrak{a} \int_{-\mathfrak{s}}^{\mathfrak{s}} \int_{[-r,r]^2} a^{-2}  \left|\left\langle H_{\varepsilon, \Xi, \rho}^\vn, \psi_{a,s,t,-1}^\omega\right\rangle\right|^2 a^{-3} \, \, \,\mathrm{d}t \, \mathrm{d}s \, \mathrm{d}a.
\end{align*}
We have that $|H_{\varepsilon, \Xi, \rho}^\vn|_{B, [-r,r]^2}^2 =\mathrm{I}_1^{\varepsilon, \rho, \omega, \vn} + \mathrm{I}_{-1}^{\varepsilon, \rho, \omega, \vn}$.
If $\vn_1 \neq 0$, then, by the considerations after the definition of $H_{\varepsilon, \Xi, \rho}^\vn$ in \eqref{def:PhaseTrans}, we have that
\begin{align*}
\mathrm{I}_{1}^{\varepsilon, \rho, \omega, \vn} &=  \int_0^\mathfrak{a} \int_{-\mathfrak{s}}^{\mathfrak{s}} \int_{[-r,r]^2} a^{-2} \left|\left\langle H_{\varepsilon, \widetilde{\Xi}, \rho/{|\vn_1|}}( S_{\vn_2/\vn_1}x ), \psi_{a,s,t,1}^\omega\right \rangle\right|^2a^{-3} \, \, \,\mathrm{d}t \, \mathrm{d}s \, \mathrm{d}a,
\end{align*}
where $\widetilde{\Xi} = \Xi$ or $\widetilde{\Xi} = 1-\Xi(-\cdot)$. We assume in the sequel that $\widetilde{\Xi} = \Xi$. The case $\widetilde{\Xi} = 1-\Xi(-\cdot)$ follows similarly. If $\vn_1 = 0$, then $H_{\varepsilon, \Xi, \rho}^\vn= H_{\varepsilon, \Xi, \rho}^{\vn, -1} = \widetilde{H}_{\varepsilon, \Xi, \rho/{|\vn_2|}}$ is constant along the $x_1$-direction. As each $\psi_{a,s,t,1}$ has a vanishing moment in the $x_1$-direction, this then implies that $\mathrm{I}_{1}^{\varepsilon, \rho, \omega, \vn}  = 0$.

Following the same argument as before, we get that if $\vn_2 \neq 0$, then
\begin{align*}
\mathrm{I}_{-1}^{\varepsilon, \rho, \omega, \vn}  &= \int_0^\mathfrak{a} \int_{-\mathfrak{s}}^{\mathfrak{s}} \int_{[-r,r]^2} a^{-2}  \left|\left \langle \widetilde{H}_{\varepsilon, \widetilde{\Xi}, \rho/{|\vn_2|}}\left( S^T_{\vn_1/\vn_2}x \right), \psi_{a,s,t,-1}^\omega\right \rangle\right|^2 a^{-3} \, \, \,\mathrm{d}t \, \mathrm{d}s \, \mathrm{d}a,
\end{align*}
where $\widetilde{\Xi} = \Xi$ or $\widetilde{\Xi} = 1-\Xi(-\cdot)$. We assume that $\widetilde{\Xi} = \Xi$; the other case follows similarly. If $\vn_2 = 0$, then $\mathrm{I}_{-1}^{\varepsilon, \rho, \omega, \vn}  = 0$.

\step{2}{Partial integration}
We define
$$
h_{\varepsilon, \Xi, \rho/{|\vn_1|}}^1 \coloneqq \frac{\partial }{\partial x_1} H_{\varepsilon, \Xi, \rho/{|\vn_1|}} \quad \text{ and } \quad
h_{\varepsilon, \Xi, \rho/{|\vn_2|}}^{-1} \coloneqq \frac{\partial }{\partial x_2} \widetilde{H}_{\varepsilon, \Xi, \rho/{|\vn_2|}},
$$
and we denote from now on $\theta \coloneqq -\vn_2/\vn_1$ to shorten the notation. Setting $\mu \coloneqq \mu^1 \otimes \phi^1$, with $\widehat{\mu^1}(\xi) \coloneqq \frac{1}{2\pi i \xi} \widehat{\psi^1}(\xi)$, for $\xi \in \R$,
we have that $\mu^1$ is the antiderivative of $\psi^1$, and, since $\psi^1$ has a vanishing moment, we have that $\suppp \mu^1 \subset [0,1]$. Moreover, 
\begin{align} \label{eq:VanishingMomentsMu}
\left|\widehat{\phi^1}(\xi)\right| \leq (1+|\xi|)^{-4} \quad \text{ and } \quad \left|\widehat{\mu^1}(\xi)\right| \leq \frac{\min\{ |\xi|, 1\}^3}{(1+|\xi|)^4} \leq \frac{\min\{ |\xi|, 1\}^2}{(1+|\xi|)^4}, \quad  \text{ for all } \xi \in \R.
\end{align}
Next, we compute
\begin{align*}
\mathrm{I}_{1}^{\varepsilon, \rho, \omega, \vn}   = & \ \int_0^\mathfrak{a} \int_{-\mathfrak{s}}^{\mathfrak{s}} \int_{[-r,r]^2} \left|\left\langle h_{\varepsilon, \Xi, \rho/{|\vn_1|}}^1(S_{-\theta} \cdot), \mu_{a,s,t, 1}^\omega\right \rangle\right|^2 a^{-3}\, \,\mathrm{d}t \, \mathrm{d}s \, \mathrm{d}a\\
 = & \ \int_0^\mathfrak{a} \int_{-\mathfrak{s}}^{\mathfrak{s}} \int_{[-r,r]^2} \left|\left\langle h_{\varepsilon, \Xi, \rho/{|\vn_1|}}^1, \mu_{a,s+\theta,t,1}^{\omega_{\theta}}\right \rangle\right|^2 a^{-3}\, \,\mathrm{d}t \, \mathrm{d}s \, \mathrm{d}a\\
 = & \ \int_0^\mathfrak{a} \int_{-\mathfrak{s}+\theta}^{\mathfrak{s}+\theta} \int_{[-r,r]^2} \left|\left\langle h_{\varepsilon, \Xi, \rho/{|\vn_1|}}^1, \mu_{a,s,t,1}^{\omega_{\theta}}\right \rangle\right|^2 a^{-3}\, \,\mathrm{d}t \, \mathrm{d}s \, \mathrm{d}a,
\end{align*}
where $\omega_{\theta}(\iota, s) = \omega(\iota, s - \theta)$. By the same argument, we obtain that
\begin{align*}
\mathrm{I}_{-1}^{\varepsilon, \rho, \omega, \vn}  = & \ \int_0^\mathfrak{a} \int_{-\mathfrak{s}+\frac{1}{\theta}}^{\mathfrak{s}+\frac{1}{\theta}} \int_{[-r,r]^2} \left|\left\langle h_{\varepsilon, \Xi, \rho/{|\vn_2|}}^1, \mu_{a,s,t,-1}^{\omega_{\frac{1}{\theta}}}\right \rangle\right|^2 a^{-3}\, \,\mathrm{d}t \, \mathrm{d}s \, \mathrm{d}a.
\end{align*}
\step{3}{Removing non-aligned parts}
We have that $
h_{\varepsilon, \Xi, \rho/{|\vn_1|}}^1(x) = {\varepsilon^{-1}} |\vn_1|/\rho \cdot g(|\vn_1| x_1 /(\rho\varepsilon))
$ for $g(x) = \Xi'(x)$ if $x \in [-1/2, 1/2]$ and $g(x) = 0$ otherwise. Moreover, $\|g\|_{L^1(\R)} \leq \|\Xi'\|_\infty$ and thus $\|\hat{g}\|_\infty< \|\Xi'\|_\infty$.
The Fourier transform of $h_{\varepsilon, \Xi, \rho/{|\vn_1|}}^1$ is a tempered distribution and is given as 
$$
\mathcal{F}\left(\frac{\partial }{\partial x_1} H_{\varepsilon, \Xi, \rho/{|\vn_1|}}\right) = \hat{g}\left(\frac{\rho \varepsilon}{|\vn_1|} \left(\cdot\right)_1\right) \otimes \delta_{\xi_2 = 0},
$$
where $\delta_{\xi_2 = 0}$ denotes the $\delta$-distribution in $\xi_2$. 

Let $G \in C^\infty_c(\R^2)$ with $\|G\|_{L^1(\R^2)} = 1$. We set $G_m \coloneqq m^2 G(m \cdot)$ so that $(G_m)_{m\in \N}$ is a sequence of mollifiers. Since $\mu_{a,s,t,1}$ has compact support, we have that $\mu_{a,s,t,1} * G_m$ is a Schwartz function and
$$
\|\mu_{a,s,t,1} * G_m - \mu_{a,s,t,1}\|_{L^p(\R^2)} \to 0, \text{ for } m \to \infty,
$$
for all $p \in [1,\infty]$, see e.g. \cite[Lemma 2.18]{adams2003sobolev}. Parseval's identity and the convolution theorem show that
\begin{align*}
\left|\left\langle h_{\varepsilon, \Xi, \rho/{|\vn_1|}}^1, \mu_{a,s,t,1}\right \rangle\right| = & \lim_{m \to \infty} \left|\left\langle h_{\varepsilon, \Xi, \rho/{|\vn_1|}}^1, \overline{\mu_{a,s,t,1} * G_m}\right \rangle\right|\\
= & \lim_{m \to \infty} \left| \int_{\R} \hat{g}\left(\frac{\rho \varepsilon}{|\vn_1|} \xi\right) \overline{\mathcal{F}(\mu_{a,s,t,1} * G_m)(\xi, 0)} \mathrm{d}\xi \right|\\
= & \lim_{ m \to \infty} a^{\frac34}\left|\int_{\R} \hat{g}\left(\frac{\rho \varepsilon}{|\vn_1|} \xi\right) \overline{\widehat{\mu^1}(a \xi )\widehat{\phi^1}(\sqrt{a} s \xi) \phantom{\Large \widehat{\widehat{I}}} \widehat{G_m}(\xi/m, 0)}\, \,\mathrm{d}\xi \right|\\
\leq & \lim_{m \to \infty} a^{\frac34}\int_{\R} \left| \widehat{\mu}_1(a \xi )\widehat{\phi^1}(\sqrt{a} s \xi)\right | \cdot \left|\widehat{G_m}(\xi/m, 0) \right| \, \,\mathrm{d}\xi \cdot \|\Xi'\|_\infty,\\
\leq & \ a^{\frac34}\int_{\R} \left| \widehat{\mu}_1(a \xi )\widehat{\phi^1}(\sqrt{a} s \xi)\right | \, \,\mathrm{d}\xi \cdot \|\Xi'\|_\infty,
\end{align*}
where we have used that $\|\widehat{G_m}\|_{L^\infty(\R^2)} \leq \|{G_m}\|_{L^1(\R^2)}$, for all $m \in \N$. 
By \eqref{eq:VanishingMomentsMu}, we conclude for $s\neq 0$ that 
\begin{align}
a^{\frac34}\int_{\R} \left| \widehat{\mu}_1(a \xi )\widehat{\phi^1}(\sqrt{a} s \xi) \right| \, \,\mathrm{d}\xi &\leq a^{\frac34}\int_{\R} \frac{\min\{ |a \xi|, 1\}^2}{{(1+|a \xi|)^2} (1+|\sqrt{a} s \xi|)^4} \, \,\mathrm{d}\xi \nonumber \\
& \leq a^{\frac14} s^{-1} \int_{\R} \frac{|s^{-1} \sqrt{a} \xi|^2}{(1+|\xi|)^4} \, \,\mathrm{d}\xi \lesssim a^{\frac54}s^{-3},\label{eq:iugfwiug}
\end{align}
since $\int_{\R}|\xi|^2/(1+|\xi|)^4 d\xi < \infty$. Next, we remove terms from $\mathrm{I}_{1}^{\varepsilon, \rho, \omega, \vn} $ and $\mathrm{I}_{-1}^{\varepsilon, \rho, \omega, \vn} $ that 
asymptotically only grow relatively slowly for $\varepsilon \to 0$. Let $\nu_{s}>0$; then we define
\begin{align*}
\mathrm{K}_{1}^{\varepsilon, \rho, \vn, \nu_s} \coloneqq   &  \int_0^\mathfrak{a} \int_{(-\infty, -\nu_s]} \int_{[-r,r]^2}  \left|\left\langle h_{\varepsilon, \Xi, \rho/{|\vn_1|}}^1, \mu_{a,s,t,1}\right \rangle\right|^2a^{-3}\, \,\mathrm{d}t \, \mathrm{d}s \, \mathrm{d}a\\
& \qquad +   \int_0^\mathfrak{a} \int_{[\nu_s, \infty)} \int_{[-r,r]^2} \left|\left\langle h_{\varepsilon, \Xi, \rho/{|\vn_1|}}^1, \mu_{a,s,t,1}\right \rangle\right|^2 a^{-3}\, \,\mathrm{d}t \, \mathrm{d}s \, \mathrm{d}a.
\end{align*}
Invoking \eqref{eq:iugfwiug}, we get that
$$
\mathrm{K}_{1}^{\varepsilon, \rho, \vn, \nu_s}  \leq 2  \int_0^\mathfrak{a} \int_{(-\infty, -\nu_s]} \int_{[-r,r]^2} a^{-\frac12} s^{-6} \, \,\mathrm{d}t \, \mathrm{d}s \, \mathrm{d}a \lesssim \nu_s^{-5}.
$$
Clearly, the same estimate can be derived for
\begin{align*}
\mathrm{K}_{-1}^{\varepsilon, \rho, \vn, \nu_s} \coloneqq   &  \int_0^\mathfrak{a} \int_{(-\infty, -\nu_s]} \int_{[-r,r]^2} \left|\left\langle h_{\varepsilon, \Xi, \rho/{|\vn_2|}}^{-1}, \mu_{a,s,t,-1}\right \rangle\right|^2 a^{-3}\, \,\mathrm{d}t \, \mathrm{d}s \, \mathrm{d}a\\
& \qquad +  \int_0^\mathfrak{a} \int_{[\nu_s, \infty)} \int_{[-r,r]^2}  \left|\left\langle h_{\varepsilon, \Xi, \rho/{|\vn_2|}}^{-1}, \mu_{a,s,t,-1}\right \rangle\right|^2 a^{-3}\, \,\mathrm{d}t \, \mathrm{d}s \, \mathrm{d}a.
\end{align*}
We set from now on $\nu_s \coloneqq \varepsilon^{\frac{1}{11}}$ and observe that $\mathrm{K}_{1}^{\varepsilon, \rho, \vn, \nu_s}, \mathrm{K}_{-1}^{\varepsilon, \rho, \vn, \nu_s} = o(\varepsilon^{-1/2})$.
Hence, we conclude that
\begin{align*}
\mathrm{I}_{1}^{\varepsilon, \rho, \omega, \vn}   =  \int_0^\mathfrak{a} \int_{\max\{-\mathfrak{s}+\theta, -\nu_s\}}^{\min\{\mathfrak{s}+\theta, \nu_s\}} \int_{[-r,r]^2}
\left|\left\langle h_{\varepsilon, \Xi, \rho/{|\vn_1|}}^1, \mu_{a,s,t,1}^{\omega_{\theta}}\right \rangle\right|^2 a^{-3}\, \,\mathrm{d}t \, \mathrm{d}s \, \mathrm{d}a + o\left(\varepsilon^{-\frac{1}{2}}\right).
\end{align*}
Next, we observe that replacing the directional weight by a scalar only produces an asymptotically negligible error. Let $N\in \N$; then, 
\begin{align*}
& \int_0^\mathfrak{a} \int_{\max\{-\mathfrak{s}+\theta, -\nu_s\}}^{\min\{\mathfrak{s}+\theta, \nu_s\}} \int_{[-r,r]^2} \left|\left\langle h_{\varepsilon, \Xi, \rho/{|\vn_1|}}^{1}, \mu_{a,s,t,1}^{\omega_{\theta}}\right \rangle\right|^2 a^{-3}\, \,\mathrm{d}t \, \mathrm{d}s \, \mathrm{d}a\\
&\qquad -  \int_0^\mathfrak{a} \int_{\max\{-\mathfrak{s}+\theta, -\nu_s\}}^{\min\{\mathfrak{s}+\theta, \nu_s\}} \int_{[-r,r]^2}  \omega(1,-\theta)^2 \left|\left\langle h_{\varepsilon, \Xi, \rho/{|\vn_1|}}^{1}, \mu_{a,s,t,1}\right \rangle\right|^2 a^{-3}\, \,\mathrm{d}t \, \mathrm{d}s \, \mathrm{d}a\\
= & \frac{1}{N}\int_0^\mathfrak{a} \int_{\max\{-\mathfrak{s}+\theta, -\nu_s\}}^{\min\{\mathfrak{s}+\theta, \nu_s\}} \int_{[-r,r]\times [-Nr,Nr]} \left|\left\langle h_{\varepsilon, \Xi, \rho/{|\vn_1|}}^{1}, \mu_{a,s,t,1}^{\omega_{\theta}}\right \rangle\right|^2 a^{-3}\, \,\mathrm{d}t \, \mathrm{d}s \, \mathrm{d}a\\
&\qquad - \frac{1}{N} \int_0^\mathfrak{a} \int_{\max\{-\mathfrak{s}+\theta, -\nu_s\}}^{\min\{\mathfrak{s}+\theta, \nu_s\}} \int_{[-r,r]\times [-Nr,Nr]}   \omega(1,-\theta)^2 \left|\left\langle h_{\varepsilon, \Xi, \rho/{|\vn_1|}}^{1}, \mu_{a,s,t,1}\right \rangle\right|^2 a^{-3}\, \,\mathrm{d}t \, \mathrm{d}s \, \mathrm{d}a \eqqcolon \mathrm{II}.
\end{align*}
Now we observe, by the discussion before \eqref{eq:VanishingMomentsMu} that, for all $a \in (0,\mathfrak{a})$ and $s \in (-\mathfrak{s}+\theta, \mathfrak{s}+\theta)$ and $t \in [-r,r]\times [-Nr,Nr]$, we have that $\suppp \mu_{a,s,t,1} \subset [-r-c,r + c] \times [-Nr-c,Nr + c]$ for a constant $c$ depending on $\mathfrak{a}$ and $\mathfrak{s}$ and $\theta$. 
Therefore, we have that, for $D_{N,r} \coloneqq [-r-c,r + c] \times [-Nr-c,Nr + c]$,
\begin{align*}
\mathrm{II} = &\frac{1}{N}\int_0^\mathfrak{a} \int_{\max\{-\mathfrak{s}+\theta, -\nu_s\}}^{\min\{\mathfrak{s}+\theta, \nu_s\}} \int_{[-r,r]\times [-Nr,Nr]} \left|\left\langle \left(h_{\varepsilon, \Xi, \rho/{|\vn_1|}}^{1}\right)_{|D_{N,r}}, \mu_{a,s,t,1}^{\omega_{\theta}}\right \rangle\right|^2 a^{-3}\, \,\mathrm{d}t \, \mathrm{d}s \, \mathrm{d}a\\
&- \frac{1}{N} \int_0^\mathfrak{a} \int_{\max\{-\mathfrak{s}+\theta, -\nu_s\}}^{\min\{\mathfrak{s}+\theta, \nu_s\}} \int_{[-r,r]\times [-Nr,Nr]}   \omega(1,-\theta)^2 \left|\left\langle \left(h_{\varepsilon, \Xi, \rho/{|\vn_1|}}^{1}\right)_{|D_{N,r}}, \mu_{a,s,t,1}\right \rangle\right|^2 a^{-3}\, \,\mathrm{d}t \, \mathrm{d}s \, \mathrm{d}a \\
 = &\frac{1}{N}\int_0^\mathfrak{a} \int_{\max\{-\mathfrak{s}+\theta, -\nu_s\}}^{\min\{\mathfrak{s}+\theta, \nu_s\}} \int_{\R^2} \left|\left\langle \left(h_{\varepsilon, \Xi, \rho/{|\vn_1|}}^{1}\right)_{|D_{N,r}}, \mu_{a,s,t,1}^{\omega_{\theta}}\right \rangle\right|^2 a^{-3}\, \,\mathrm{d}t \, \mathrm{d}s \, \mathrm{d}a\\
&- \frac{1}{N} \int_0^\mathfrak{a} \int_{\max\{-\mathfrak{s}+\theta, -\nu_s\}}^{\min\{\mathfrak{s}+\theta, \nu_s\}} \int_{\R^2}   \omega(1,-\theta)^2 \left|\left\langle \left(h_{\varepsilon, \Xi, \rho/{|\vn_1|}}^{1}\right)_{|D_{N,r}}, \mu_{a,s,t,1}\right \rangle\right|^2 a^{-3}\, \,\mathrm{d}t \, \mathrm{d}s \, \mathrm{d}a.
\end{align*}
By the Bessel property, see Appendix \ref{app:Bessel}, \eqref{eq:BesselInequalityIndepOfsa}, we can deduce using the Lipschitz continuity of $\omega$ and $\varepsilon^{-10/11} = o(\varepsilon^{-1} \log_2(\varepsilon^{-1})^{-1})$, for $\varepsilon \to 0$, that
\begin{align*}
\mathrm{II} = & \frac{1}{N}\int_0^\mathfrak{a} \int_{\max\{-\mathfrak{s}+\theta, -\nu_s\}}^{\min\{\mathfrak{s}+\theta, \nu_s\}} \int_{[-r,r]^2} \left|\left\langle h_{\varepsilon, \Xi, \rho/{|\vn_1|}}^{1}, \mu_{a,s,t,1}^{\omega_{\theta}}\right \rangle\right|^2 a^{-3}\, \,\mathrm{d}t \, \mathrm{d}s \, \mathrm{d}a\\
&\qquad - \frac{1}{N} \int_0^\mathfrak{a} \int_{\max\{-\mathfrak{s}+\theta, -\nu_s\}}^{\min\{\mathfrak{s}+\theta, \nu_s\}} \int_{[-r,r]^2}  \omega(1,-\theta)^2 \left|\left\langle h_{\varepsilon, \Xi, \rho/{|\vn_1|}}^{1}, \mu_{a,s,t,1}\right \rangle\right|^2 a^{-3}\, \,\mathrm{d}t \, \mathrm{d}s \, \mathrm{d}a\\
\leq & \sup_{s \in [-\nu_s, \nu_s]} |\omega(1,-\theta)^2 - \omega(1, s-\theta)^2| \cdot \frac{1}{N} \left\|h_{\varepsilon, \Xi, \rho/{|\vn_1|}}^{1}\right\|_{L^2(D_{N,r})}^2 \\
\lesssim &\ \nu_s \left\| h_{\varepsilon, \Xi, \rho/{|\vn_1|}}^{1}(\cdot, 0) \right\|_{L^2((-r-c,r+c))}^2 \frac{2(Nr+c)}{N}\\
\lesssim & \ 
\nu_s \varepsilon^{-1} r  = r \cdot o\left(\frac{1}{\varepsilon} \log_2\left(\frac{1}{\varepsilon}\right)^{-1}\right), \text{ for } \varepsilon \to 0,  
\end{align*}
where the last estimate holds since $N$ was arbitrary and can be chosen larger than $c/r$. The implied constant in the last estimate depends quadratically on $\|\Xi'\|_\infty/\rho$. Consequentially, we have that
\begin{align}\label{eq:AHandyVersionOfI1}
\mathrm{I}_{1}^{\varepsilon, \rho, \omega, \vn}   = \omega(1, -\theta)^2 \int_0^\mathfrak{a}\int_{\max\{-\mathfrak{s}+\theta, -\nu_s\}}^{\min\{\mathfrak{s}+\theta, \nu_s\}} \int_{[-r,r]^2} \left|\left\langle h_{\varepsilon, \Xi, \rho/{|\vn_1|}}^{1}, \mu_{a,s,t,1}\right \rangle\right|^2  a^{-3}\, \,\mathrm{d}t \, \mathrm{d}s \, \mathrm{d}a\\
+ \ r \cdot o\left(\frac{1}{\varepsilon} \log_2\left(\frac{1}{\varepsilon}\right)^{-1}\right) + o\left(\varepsilon^{-\frac{1}{2}}\right),\nonumber
\end{align}
for $\varepsilon \to 0$. Similarly, 
\begin{align*}
\mathrm{I}_{-1}^{\varepsilon, \rho, \omega, \vn}   = \omega(-1, -1/\theta)^2  \int_0^\mathfrak{a}\int_{\max\{-\mathfrak{s}+1/\theta, -\nu_s\}}^{\min\{\mathfrak{s}+1/\theta, \nu_s\}} \int_{[-r,r]^2} \left|\left\langle h_{\varepsilon, \Xi, \rho/{|\vn_2|}}^{-1}, \mu_{a,s,t,-1}\right \rangle\right|^2  a^{-3}\, \,\mathrm{d}t \, \mathrm{d}s \, \mathrm{d}a\\  +  \
r \cdot o\left(\frac{1}{\varepsilon} \log_2\left(\frac{1}{\varepsilon}\right)^{-1}\right)  + o\left(\varepsilon^{-\frac{1}{2}}\right),
\end{align*}
for $\varepsilon \to 0$.

\step{4}{Rewriting as a non-cone-adapted shearlet transform}

We want to show that $\mathrm{I}_{1}^{\varepsilon, \rho, \omega, \vn} $ is asymptotically a rescaled version of the Sobolev seminorm of $H_{\varepsilon, \Xi, \rho}^{(1,0)}$.
Towards this goal, we plan to invoke \eqref{eq:TheAssumption} and use the resulting equivalence of the homogeneous shearlet transform and the $L^2$-norm.
The main obstacle here is to deal with the fact that the integral in \eqref{eq:AHandyVersionOfI1} is taken only over a finite domain.
Moreover, simply extending $ h_{\varepsilon, \Xi, \rho/{|\vn_1|}}^{1}$ to $\R^2$ does not yield an $L^2$ function. To overcome these issues, we use a blow-up technique, that was already applied in Proposition \ref{prop:equivalence}.

We define
\begin{align} \label{eq:DefinitionOfBeps}
\mathrm{B}_\varepsilon \coloneqq   \int_0^\mathfrak{a}\int_{\max\{-\mathfrak{s}+\theta, -\nu_s\}}^{\min\{\mathfrak{s}+\theta, \nu_s\}} \int_{[-r,r]^2}\left|\left\langle h_{\varepsilon, \Xi, \rho/{|\vn_1|}}^{1}, \mu_{a,s,t,1}\right \rangle\right|^2 a^{-3}\, \,\mathrm{d}t \, \mathrm{d}s \, \mathrm{d}a.
\end{align}
Since $H_{\varepsilon, \Xi, \rho/{|\vn_1|}}$ is constant in the $x_2$-direction we conclude that, for all $N\in \N$,
\begin{align*}
\mathrm{B}_\varepsilon & = \frac{1}{N}  \int_0^\mathfrak{a}\int_{\max\{-\mathfrak{s}+\theta, -\nu_s\}}^{\min\{\mathfrak{s}+\theta, \nu_s\}} \int_{[-r,r]\times [-N r,N r]} \left|\left\langle h_{\varepsilon, \Xi, \rho/{|\vn_1|}}^{1}, \mu_{a,s,t,1}\right \rangle\right|^2  a^{-3}\, \mathrm{d}a \, \mathrm{d}s \, \mathrm{d}t.
\end{align*}
We replace $h_{\varepsilon, \Xi, \rho/{|\vn_1|}}^{1}$ by a suitable $L^2(\R^2)$ function by the following construction: 
Let $\gamma \in C^\infty(\R)$ with $\suppp \gamma \in [-1/2,1/2]$, $\gamma(x)>0$ for $x \in (-1/2,1/2)$, and $\int_{-1/2}^{1/2} \gamma(x)\,\mathrm{d}x = 1$. We set, for $(x_1,x_2) \in \R^2$,
\begin{align}\label{eq:DefinitionOfFN}
f_N(x_1,x_2) \coloneqq \frac{|\vn_1|}{\varepsilon} g\left(\frac{|\vn_1|}{\rho \varepsilon}x_1\right) \cdot \left(\gamma * \chi_{[-Nr- \mathfrak{a}, Nr+\mathfrak{a}]}\right) (x_2).
\end{align}
It is not hard to see that
\begin{align*}
\mathrm{B}_\varepsilon = & \ \frac{1}{N} \int_0^\mathfrak{a}\int_{\max\{-\mathfrak{s}+\theta, -\nu_s\}}^{\min\{\mathfrak{s}+\theta, \nu_s\}} \int_{[-r,r]\times [-Nr,Nr]} \left|\left\langle f_N, \mu_{a,s,t,1}\right \rangle\right|^2  a^{-3}\, \,\mathrm{d}t \, \mathrm{d}s \, \mathrm{d}a.
\end{align*}
Next, we need to extend the domain of integration to $\R^2$. We start by extending to
$[-r,r] \times \R$. We have that
\begin{align}
\mathrm{B}_\varepsilon = & \ \frac{1}{N} \int_0^\mathfrak{a}\int_{\max\{-\mathfrak{s}+\theta, -\nu_s\}}^{\min\{\mathfrak{s}+\theta, \nu_s\}} \int_{\R^2} \left|\left\langle f_N, \mu_{a,s,t,1}\right \rangle\right|^2  a^{-3}\, \,\mathrm{d}t \, \mathrm{d}s \, \mathrm{d}a \nonumber\\
& - \ \frac{1}{N} \int_0^\mathfrak{a}\int_{\max\{-\mathfrak{s}+\theta, -\nu_s\}}^{\min\{\mathfrak{s}+\theta, \nu_s\}} \int_{\R^2 \setminus ((-r,r) \times (-N r,N r))} \left|\left\langle f_N, \mu_{a,s,t,1}\right \rangle\right|^2  a^{-3}\, \,\mathrm{d}t \, \mathrm{d}s \, \mathrm{d}a. \label{eq:SplittingOfB}
\end{align}
We have that
\begin{align}\label{eq:SplittingOfSets}
\R^2 \setminus ((-r,r) \times N(-r,r)) = \big((-r,r) \times (\R \setminus (-N r,N r))\big) \cup \big((\R \setminus [-r,r]) \times \R \big).
\end{align}
As $B_\varepsilon = 0$ if $|\theta| \geq \mathfrak{s} + 1$, we assume that $|\theta| \leq \mathfrak{s} + 1$ and hence
$$
(\mathfrak{s} + 1)^2 \geq \frac{\vn_2^2}{\vn_1^2} = \frac{1}{\vn_1^2} - 1
$$
and thus $|\vn_1| \geq 1/(\sqrt{1 + (\mathfrak{s} + 1)^2}) \geq 1/(\sqrt{(\mathfrak{s} + 2)^2})  =  1/(\mathfrak{s}+2)$. We choose $R>0$
such that $\suppp \mu_{a,s,t,1} \subset B_{a^{1/2} R}(t)$ for all $(a,s,t) \in [0,\mathfrak{a}] \times [- 1, 1] \times \R^2$.
Hence we have that $\left\langle f_N, \mu_{a,s,t,1}\right \rangle = 0$
if $|t_2| > \rho\varepsilon/|n_1| + R \max\{a^{1/2}, a\}$.
Thus, we set $\mathfrak{a}_{r,\varepsilon} \coloneqq (r-\rho\varepsilon (\mathfrak{s}+2))^2/R^2$ and compute, using $\nu_s \leq 1$,
\begin{align*}
 &\frac{1}{N} \left|\int_0^\mathfrak{a}\int_{\max\{-\mathfrak{s}+\theta, -\nu_s\}}^{\min\{\mathfrak{s}+\theta, \nu_s\}} \int_{(\R \setminus [-r,r]) \times \R} \left|\left\langle f_N, \mu_{a,s,t,1}\right \rangle\right|^2  a^{-3}\, \,\mathrm{d}t \, \mathrm{d}s \, \mathrm{d}a\right| \\
 &  =  \frac{1}{N} \left|\int_{\mathfrak{a}_{r,\varepsilon}}^\mathfrak{a}\int_{\max\{-\mathfrak{s}+\theta, -\nu_s\}}^{\min\{\mathfrak{s}+\theta, \nu_s\}} \int_{(\R \setminus [-r,r]) \times \R} \left|\left\langle f_N, \mu_{a,s,t,1}\right \rangle\right|^2  a^{-3}\, \,\mathrm{d}t \, \mathrm{d}s \, \mathrm{d}a\right|\\
 &\leq \frac{1}{N} \left|\int_{\mathfrak{a}_{r,\varepsilon}}^\mathfrak{a}\int_{\max\{-\mathfrak{s}+\theta, -\nu_s\}}^{\min\{\mathfrak{s}+\theta, \nu_s\}} \int_{\R^2} \left|\left\langle f_N, \mu_{a,s,t,1}\right \rangle\right|^2  a^{-3}\, \,\mathrm{d}t \, \mathrm{d}s \, \mathrm{d}a\right|.
\end{align*}
 We have by Plancherel's identity and Parseval's identity that
\begin{align*}
    & \ \frac{1}{N} \left|\int_{\mathfrak{a}_{r,\varepsilon}}^\mathfrak{a}\int_{\max\{-\mathfrak{s}+\theta, -\nu_s\}}^{\min\{\mathfrak{s}+\theta, \nu_s\}} \int_{\R^2} \left|\left\langle f_N, \mu_{a,s,t,1}\right \rangle\right|^2  a^{-3}\, \,\mathrm{d}t \, \mathrm{d}s \, \mathrm{d}a\right|\\
    & \quad =  \ \frac{1}{N} \left|\int_{\mathfrak{a}_{r,\varepsilon}}^\mathfrak{a}\int_{\max\{-\mathfrak{s}+\theta, -\nu_s\}}^{\min\{\mathfrak{s}+\theta, \nu_s\}} \int_{\R^2}\left|\left\langle \widehat{f_N}, \widehat{\mu}_{a,s,t,1}\right \rangle\right|^2  a^{-3}\, \,\mathrm{d}t \, \mathrm{d}s \, \mathrm{d}a\right|\\
     & \quad = \frac{1}{N} \int_{\mathfrak{a}_{r,\varepsilon}}^\mathfrak{a}\int_{\max\{-\mathfrak{s}+\theta, -\nu_s\}}^{\min\{\mathfrak{s}+\theta, \nu_s\}} \int_{\R^2} \left|\int_{\R^2} \widehat{f_N}(\xi) \widehat{\mu}_{a,s,0,1}(\xi) \mathrm{e}^{-2\pi i \langle \xi, t\rangle} \,\mathrm{d}\xi \right|^2  a^{-3}\, \,\mathrm{d}t \, \mathrm{d}s \, \mathrm{d}a\\
    & \quad =  \ \frac{1}{N} \int_{\mathfrak{a}_{r,\varepsilon}}^\mathfrak{a}\int_{\max\{-\mathfrak{s}+\theta, -\nu_s\}}^{\min\{\mathfrak{s}+\theta, \nu_s\}}\int_{\R^2}	\left|\widehat{f_N}(\xi)\right|^2| \widehat{\mu}_{a,s,0,1}(\xi)|^2 a^{-3} \, \,\mathrm{d}\xi \, \mathrm{d}s \, \mathrm{d}a \\
     & \quad =  \ \frac{1}{N}\int_{\R^2} \left|\widehat{f_N}(\xi)\right|^2   \int_{\mathfrak{a}_{r,\varepsilon}}^\mathfrak{a}\int_{\max\{-\mathfrak{s}+\theta, -\nu_s\}}^{\min\{\mathfrak{s}+\theta, \nu_s\}} | \widehat{\mu}_{a,s,0,1}(\xi)|^2 a^{-3} \mathrm{d}s \, \mathrm{d}a \, \,\mathrm{d}\xi.
\end{align*}
Taking the Fourier transform of \eqref{eq:DefinitionOfFN} shows that, for all $\xi \in \R^2$,
\begin{align}\label{eq:FourierTransform}
\widehat{f_N}(\xi) = N \mathrm{sinc}((N+\mathfrak{a}) \xi_2)  \widehat{\gamma}(\xi_2) \hat{g}\left(\frac{\rho \varepsilon}{|\vn_1|} \xi_1\right).
\end{align}
Moreover, we show in Appendix \ref{app:Computations} that, for all $\xi \in \R^2$,
\begin{align}
\int_{\mathfrak{a}_{r,\varepsilon}}^\mathfrak{a}\int_{\max\{-\mathfrak{s}+\theta, -\nu_s\}}^{\min\{\mathfrak{s}+\theta, \nu_s\}} | \widehat{\mu}_{a,s,0,1}(\xi)|^2 a^{-3}\mathrm{d}s \, \mathrm{d}a \lesssim \frac{1}{\mathfrak{a}_{r,\varepsilon}^2} \frac{1}{(1+|\xi_1|)^2}. \label{eq:FirstPartofBEpsilon}
\end{align}
We conclude that
\begin{align*}
    & \ \frac{1}{N} \left|\int_{\mathfrak{a}_{r,\varepsilon}}^\mathfrak{a}\int_{\max\{-\mathfrak{s}+\theta, -\nu_s\}}^{\min\{\mathfrak{s}+\theta, \nu_s\}} \int_{\R^2} \left|\left\langle f_N, \mu_{a,s,t,1}\right \rangle\right|^2  a^{-3}\, \,\mathrm{d}t \, \mathrm{d}s \, \mathrm{d}a\right| =  \mathcal{O}\left(\frac{1}{\mathfrak{a}_{r,\varepsilon}^2}\right), \text{ for } \varepsilon \to 0,
\end{align*}
where the implicit constant depends quadratically on $\|\Xi'\|_\infty$.
We proceed by estimating the integral over the second set given in \eqref{eq:SplittingOfSets}:
\begin{align*}
 &\frac{1}{N} \left|\int_0^\mathfrak{a}\int_{\max\{-\mathfrak{s}+\theta, -\nu_s\}}^{\min\{\mathfrak{s}+\theta, \nu_s\}} \int_{[-r,r] \times (\R \setminus [-N r, N r])} \left|\left\langle f_N, \mu_{a,s,t,1}\right \rangle\right|^2  a^{-3}\, \,\mathrm{d}t \, \mathrm{d}s \, \mathrm{d}a\right|  \eqqcolon \mathrm{III}.
\end{align*}
Since $\suppp \mu_{a,s,t, \iota} \subset B_{R \mathfrak{a}}(t)$ we conclude that
\begin{align}
 \mathrm{III} = &\ \frac{1}{N} \left|\int_0^\mathfrak{a}\int_{\max\{-\mathfrak{s}+\theta, -\nu_s\}}^{\min\{\mathfrak{s}+\theta, \nu_s\}} \int_{[-r,r] \times (\R \setminus [-N r,N r])} \left|\left\langle f_N \chi_{\R \times [-Nr + R\mathfrak{a}, Nr - R\mathfrak{a}]^c}, \mu_{a,s,t,1}\right \rangle\right|^2  a^{-3}\, \,\mathrm{d}t \, \mathrm{d}s \, \mathrm{d}a\right|\nonumber \\
 \leq &\ \frac{1}{N} \left|\int_0^\mathfrak{a}\int_{\max\{-\mathfrak{s}+\theta, -\nu_s\}}^{\min\{\mathfrak{s}+\theta, \nu_s\}} \int_{\R^2} \left|\left\langle f_N \chi_{\R \times [-Nr + R\mathfrak{a}, Nr - R\mathfrak{a}]^c}, \mu_{a,s,t,1}\right \rangle\right|^2  a^{-3}\, \,\mathrm{d}t \, \mathrm{d}s \, \mathrm{d}a\right| \nonumber \\
 \lesssim & \ \frac{1}{N} \|f_N \chi_{\R \times [-Nr + R\mathfrak{a}, Nr - R\mathfrak{a}]^c}\|_{L^2(\R^2)}^2, \label{eq:SecondPartofBEpsilon}
\end{align}
where the last step is due to the Bessel inequality of the shearlet transform, see Appendix \ref{app:Bessel}. 
Since $[-Nr + R\mathfrak{a}, Nr - R\mathfrak{a}]^c$ and $[-Nr - R\mathfrak{a} - 1, Nr + R\mathfrak{a} +1]$ intersect on a set the size of which is independent of $N$, 
we get by \eqref{eq:DefinitionOfFN} that $\|f_N \chi_{\R \times [-Nr + R\mathfrak{a}, Nr - R\mathfrak{a}]^c}\|_{L^2(\R^2)}^2 = \mathcal{O}(1/\varepsilon)$, for $\varepsilon \to 0$, with the implicit constant independent of $N$.

By equations \eqref{eq:SplittingOfB}, \eqref{eq:SplittingOfSets}, \eqref{eq:FirstPartofBEpsilon}, and \eqref{eq:SecondPartofBEpsilon}, we conclude that
\begin{align*}
 B_\varepsilon = \frac{1}{N} \int_0^\mathfrak{a}\int_{\max\{-\mathfrak{s}+\theta, -\nu_s\}}^{\min\{\mathfrak{s}+\theta, \nu_s\}} \int_{\R^2} \left|\left\langle f_N, \mu_{a,s,t,1}\right \rangle\right|^2  a^{-3}\, \,\mathrm{d}t \, \mathrm{d}s \, \mathrm{d}a + \mathcal{O}\left(\frac{1}{\mathfrak{a}_{r, \varepsilon}^2}\right) + \frac{1}{N} \mathcal{O} (1/\varepsilon).
\end{align*}

We have by Plancherel's identity and by Parseval's identity that
\begin{align*}
    & \ \frac{1}{N} \int_0^\mathfrak{a}\int_{\max\{-\mathfrak{s}+\theta, -\nu_s\}}^{\min\{\mathfrak{s}+\theta, \nu_s\}} \int_{\R^2} \left|\left\langle f_N, \mu_{a,s,t,1}\right \rangle\right|^2  a^{-3}\, \,\mathrm{d}t \, \mathrm{d}s \, \mathrm{d}a\\
    & =  \ \frac{1}{N} \int_0^\mathfrak{a}\int_{\max\{-\mathfrak{s}+\theta, -\nu_s\}}^{\min\{\mathfrak{s}+\theta, \nu_s\}} \int_{\R^2}\left|\left\langle \widehat{f_N}, \widehat{\mu}_{a,s,t,1}\right \rangle\right|^2  a^{-3}\, \,\mathrm{d}t \, \mathrm{d}s \, \mathrm{d}a\\
     & =  \ \frac{1}{N} \int_0^\mathfrak{a}\int_{\max\{-\mathfrak{s}+\theta, -\nu_s\}}^{\min\{\mathfrak{s}+\theta, \nu_s\}} \int_{\R^2} \left|\int_{\R^2} \widehat{f_N}(\xi) \widehat{\mu}_{a,s,0,1}(\xi) \mathrm{e}^{-2\pi i \langle \xi, t\rangle} \,\mathrm{d}\xi \right|^2  a^{-3}\, \,\mathrm{d}t \, \mathrm{d}s \, \mathrm{d}a\\
     & =  \ \frac{1}{N} \int_0^\mathfrak{a}\int_{\max\{-\mathfrak{s}+\theta, -\nu_s\}}^{\min\{\mathfrak{s}+\theta, \nu_s\}}\int_{\R^2}	 \left|\widehat{f_N}(\xi)\right|^2| \widehat{\mu}_{a,s,0,1}(\xi)|^2 a^{-3}\,\mathrm{d}\xi  \,\mathrm{d}s \,\mathrm{d}a\\
     & =  \ \frac{1}{N}\int_{\R^2} \left|\widehat{f_N}(\xi)\right|^2   \int_0^\mathfrak{a}\int_{\max\{-\mathfrak{s}+\theta, -\nu_s\}}^{\min\{\mathfrak{s}+\theta, \nu_s\}} | \widehat{\mu}_{a,s,0,1}(\xi)|^2 a^{-3}\mathrm{d}s \, \mathrm{d}a \,\mathrm{d}\xi.
\end{align*}
Two simple but lengthy computations, that we postpone to Appendix \ref{app:Computations}, show that
\begin{align} \label{eq:estimateLowFrequencyPart}
\int_{\mathfrak{a}}^{\infty}\int_{\R} | \widehat{\mu}_{a,s,0,1}(\xi)|^2 a^{-3} \mathrm{d}s \, \mathrm{d}a \lesssim \left(1+|\xi_1|\right)^{-2} \text{ for all } \xi \in \R^2,
\end{align}
and if $|\nu_s\xi_1|/2 > |\xi_2|$, then
\begin{align}\label{eq:estimateHighFrequencyPart}
 \int_0^\mathfrak{a}\int_{-\infty}^{-\nu_s} | \widehat{\mu}_{a,s,0,1}(\xi)|^2 a^{-3}\mathrm{d}s \, \mathrm{d}a \lesssim
\frac{|\xi_1|^2}{(1+|\nu_s \xi_1|)^4}\, \text{ and } \,  \int_0^\mathfrak{a}\int_{\nu_s}^{\infty} | \widehat{\mu}_{a,s,0,1}(\xi)|^2 a^{-3}\mathrm{d}s \, \mathrm{d}a \lesssim
\frac{|\xi_1|^2}{(1+|\nu_s \xi_1|)^4}.
\end{align}
If $|\nu_s\xi_1|/2 \leq |\xi_2|$, then since $|\widehat{\gamma}(\xi_2)| \lesssim (1+|\xi_2|)^4$, we can estimate
\begin{align}\label{eq:GammaDecay}
|\widehat{\gamma}(\xi_2)| \leq (1+|\xi_2|)^2 \left(1+ \nu_s \frac{|\xi_1|}{2}\right)^2.
\end{align}
The form of the Fourier transform in \eqref{eq:FourierTransform} and the estimates \eqref{eq:estimateHighFrequencyPart}, \eqref{eq:estimateLowFrequencyPart} and \eqref{eq:GammaDecay}, imply that
$$
 \frac{1}{N} \int_0^\mathfrak{a}\int_{\max\{-\mathfrak{s}+\theta, -\nu_s\}}^{\min\{\mathfrak{s}+\theta, \nu_s\}} \int_{\R^2} \left|\left\langle f_N, \mu_{a,s,t,1}\right \rangle\right|^2  a^{-3}\, \mathrm{d}a \, \mathrm{d}s \, \mathrm{d}t = \frac{1}{N}\int_{\R^+}\int_{\R}\int_{\R^2} \left|\left\langle f_N, \mu_{a,s,t,1}\right \rangle\right|^2  a^{-3}\, \mathrm{d}a \, \mathrm{d}s \, \mathrm{d}t +
 \mathcal{O}(\nu_s^{-1}),
$$
and thus
$$
\mathrm{B}_\varepsilon =\frac{1}{N}\int_{\R^+} \int_{\R}\int_{\R^2} \left|\left\langle f_N, \mu_{a,s,t,1}\right \rangle\right|^2  a^{-3}\, \mathrm{d}a \, \mathrm{d}s \, \mathrm{d}t + \frac{1}{N}O\left(\frac{1}{\varepsilon}\right) + \mathcal{O}\left(\frac{1}{(r-\rho\varepsilon (\mathfrak{s}+2))^4}\right) + \mathcal{O}\left(\varepsilon^{-\frac{1}{11}}\right).
$$
\step{5}{Use of the Parseval property of a homogeneous system}
By \eqref{eq:TheAssumption} we obtain that, for every $\varepsilon>0$,
$$
\frac{1}{N} \|f_N\|_{L^2(\R^2)}^2 \longrightarrow \mathrm{B}_\varepsilon + \mathcal{O}\left(\frac{1}{(r-\rho\varepsilon (\mathfrak{s}+2))^4}\right) + \mathcal{O}\left(\varepsilon^{-\frac{1}{11}}\right),\text{ for } N \to \infty.
$$
Moreover, by \eqref{eq:DefinitionOfFN}, we also observe that, if $\rho \varepsilon \leq r$, then 
$$
\frac{1}{N} \|f_N\|_{L^2(\R^2)}^2 \longrightarrow  \left\|h_{\varepsilon, \Xi, \rho/{|\vn_1|}}^{1}\right\|_{L^2((-r,r)^2)}^2, \text{ for } N \to \infty.
$$
Hence, we conclude that
\begin{align*}
\mathrm{B}_\varepsilon = & \ \left\|h_{\varepsilon, \Xi, \rho/{|\vn_1|}}^{1}\right\|_{L^2((-r,r)^2)}^2 + \mathcal{O}\left(\frac{1}{(r-\rho\varepsilon (\mathfrak{s}+2))^4}\right) + \mathcal{O}\left(\varepsilon^{-\frac{1}{11}}\right)\\
= &\ \left|H_{\varepsilon, \Xi, \rho/{|\vn_1|}}^{(1,0)}\right|_{H^1((-r,r)^2)}^2 + \mathcal{O}\left(\frac{1}{(r-\rho\varepsilon (\mathfrak{s}+2))^4}\right)  + \mathcal{O}\left(\varepsilon^{-\frac{1}{11}}\right) \text{ for } \epsilon \to 0.
\end{align*}
Since $\vec{\eta} \in \mathbb{S}^1$, we have that 
$$
\left(1+ \left|\min\left\{ \left|\frac{\vn_2}{\vn_1}\right|, \left|\frac{\vn_1}{\vn_2}\right|\right\} \right|^2\right)^{-1/2} = \max\{|\vn_1|,|\vn_2| \}.
$$
Using \eqref{eq:rotation} and \eqref{eq:rescaling} we conclude that
\begin{align*}
\mathrm{B}_\varepsilon &=  \ |\vn_1| \left(1+ \left|\min\left\{ \left|\frac{\vn_2}{\vn_1}\right|, \left|\frac{\vn_1}{\vn_2}\right|\right\} \right|^2\right)^{-1/2} \left|H_{\varepsilon, \Xi, \rho}^{\vn}\right|_{H^1((-r,r)^2)}^2 + \mathcal{O}\left(\frac{1}{(r-\rho\varepsilon (\mathfrak{s}+2))^4}\right)   + \mathcal{O}\left(\varepsilon^{-\frac{1}{11}}\right)\\
 & =  |\vn_1| \max\{|\vn_1|,|\vn_2| \} \left|H_{\varepsilon, \Xi, \rho}^{\vn}\right|_{H^1((-r,r)^2)}^2 + \mathcal{O}\left(\frac{1}{(r-\rho\varepsilon (\mathfrak{s}+2))^4}\right)  + \mathcal{O}\left(\varepsilon^{-\frac{1}{11}}\right), \text{ for } \varepsilon \to 0.
\end{align*}
Since, by \eqref{eq:DefinitionOfBeps} and \eqref{eq:AHandyVersionOfI1}, we have that $\mathrm{I}_{1}^{\varepsilon, \rho, \omega, \vn}  = \omega(1, \frac{\vn_2}{\vn_1})\mathrm{B}_\varepsilon$, we get that
\begin{align*}
\mathrm{I}_{1}^{\varepsilon, \rho, \omega, \vn}  = &\max\left\{|\vn_1|, |\vn_2|\right\}\left(|\vn_1| \omega\left(1, \frac{\vn_2}{\vn_1}\right)^2 \right)  \left|H_{\varepsilon, \Xi, \rho}^{\vn}\right|_{H^1((-r,r)^2)}^2\\
& \qquad+ r\, o\left(\frac{1}{\varepsilon}\log_2\left(\frac{1}{\varepsilon}\right)^{-1}\right) + \mathcal{O}\left(\frac{1}{(r-\rho\varepsilon (\mathfrak{s}+2))^4}\right)  + o\left(\frac{1}{\sqrt{\varepsilon}}\right), \text{ for } \varepsilon \to 0.
\end{align*}
Performing the same computations for $ \mathrm{I}_{-1}^{\varepsilon, \rho, \omega, \vn} $ yields that
\begin{align*}
\mathrm{I}_{1}^{\varepsilon, \rho, \omega, \vn}  + \mathrm{I}_{-1}^{\varepsilon, \rho, \omega, \vn}  = \max\left\{|\vn_1|, |\vn_2|\right\}\left(|\vn_1| \omega\left(1, \frac{\vn_2}{\vn_1}\right)^2 + |\vn_2|\omega\left(-1, \frac{\vn_1}{\vn_2}\right)^2 \right)  \left|H_{\varepsilon, \Xi, \rho}^{\vn}\right|_{H^1((-r,r)^2)}^2\\ + r\, o\left(\frac{1}{\varepsilon}\log_2\left(\frac{1}{\varepsilon}\right)^{-1}\right) + \mathcal{O}\left(\frac{1}{(r-\rho\varepsilon (\mathfrak{s}+2))^4}\right)  + o\left(\frac{1}{\sqrt{\varepsilon}}\right), \text{ for } \varepsilon \to 0.
\end{align*}
\end{proof}

The factor of $|\vn_1|^2$ or $|\vn_2|^2$ in \eqref{eq:PropHeavisideEquation} counteracts a certain stretching effect of the shearing operation.
Indeed, as we have seen in the proof above, the shearlet-based seminorm is, apart from the weight, invariant to shearing.
On the other hand, the $H^1$ seminorm clearly is not, since shearing stretches in one direction stronger than in the other.

\subsection{Construction of a recovery sequence for polygons}

We proceed by analysing the asymptotic behaviour of the shearlet-based Ginzburg--Landau energy for functions that admit a smooth phase-transition over a polygonal curve.
We start by giving a definition of a polygon, then we introduce a class of functions that admit a smooth phase-transition across the boundary of a polygon. After that we describe the asymptotic behaviour of the shearlet-based Besov seminorm of these functions. At the end of the section, we will describe how this yields a sequence of functions with a smooth phase-transition across a polygonal curve such that the associated shearlet-based Ginzburg--Landau energies evaluated on these functions converge to an anisotropic perimeter functional.

\begin{definition}
For $N \in \N$, a closed set $P \subset (0,1)^2$ such that $\partial P$ is a piecewise affine,
non-self-intersecting curve with $N$ pieces is called \emph{$N$-gon}.
The $N$-points where the boundary curve is not affine are called vertices.
We denote them by $x_1^P, \dots, x_N^P$ and we will always assume that they are ordered so that the line between $x_i^P$ and $x_{i+1}^P$ is in $\partial P$ for all $i = 1, \dots, N$, where $x_{N+1}^P \coloneqq x_1^P$.
The identification $x_{N+1}^P = x_1^P$ will also be used in the sequel without additional comments.

The length of the boundary curve of $P$ is then
$$
\ell(P) \coloneqq \sum_{i = 1, \dots, N} |x_i^P - x_{i+1}^P|.
$$

If there is no need to explicitly specify the number $N$, then we shall simply call such a set a \emph{polygon}.
\end{definition}
To make precise what we mean by functions with transitions along polygonal curves, we first need to introduce a more general notion of a transition profile.
A continuously differentiable function $\Xi: \mathbb{S}^1 \times [-1/2,1/2] \to [-1/2,1/2]$, such that, for each $\eta \in \mathbb{S}^1$, $\Xi(\eta, \cdot)$ is a transition profile, is called \emph{directional transition profile}. Additionally, we call a continuously differentiable function $W: \mathbb{S}^1 \to \R^+$ a \emph{directional transition width}.

For a polygon $P \subset (0,1)^2$ with vertices $x_1, \dots, x_N$, there exists a normal map
$$
\partial P \ni x \mapsto \vn_x \in \mathbb{S}^1,
$$
which is well-defined everywhere except at $x_1, \dots, x_N$. We replace this normal map by an auxiliary map which is well-defined everywhere and coincides with $\vn_x$ at every point, except in a neighbourhood of $x_1, \dots, x_N$.

For $\varepsilon>0$ and a directional transition width $W$, we pick a function $\vn_x^{\varepsilon,W}$ such that
$$
\partial P \ni x \mapsto \vn_x^{\varepsilon,W} \in \mathbb{S}^1,
$$
is smooth with derivative bounded by $\pi/\varepsilon$, $\vn_x^{\varepsilon, W} =  \vn_x$ whenever $|x - x_i|> \varepsilon \max\{1,\|W\|_{\infty}\}$, for all $i = 1, \dots, N$.

We then consider the projection operator
$$
\pi:(0,1)^2 \to \partial P:
x \mapsto \argmin_{y \in \partial P} W\left(\frac{x-y}{|x-y|}\right) |x-y|,
$$
which is well-defined almost everywhere. The reason we focus on polygons with directional transition profiles is that precisely these functions form recovery sequences for the classical anisotropic Ginzburg--Landau energy as analysed in \cite[Proposition 4.10]{braides1998approximation}.

For a polygon $P$, a directional transition profile $\Xi$,
and a directional transition width $W$, we now define
\begin{align*}
    P_{\varepsilon, \Xi, W}\coloneqq \left \{ \begin{array}{ll}
        \frac{1}{2} - \Xi\left(\vn_{\pi(x)}^{\varepsilon, W}, \ \frac{\pi(x)-x}{\varepsilon} / W\left(\vn_{\pi(x)}^{\varepsilon, W}\right)\right) & \text{ if } |x - \pi(x)|< \frac{1}{2}\varepsilon W\left(\vn_{\pi(x)}^{\varepsilon, W}\right), x \in P,\\
        \frac{1}{2} - \Xi\left(\vn_{\pi(x)}^{\varepsilon, W} , \ \frac{x-\pi(x)}{\varepsilon} / W\left(\vn_{\pi(x)}^{\varepsilon, W}\right)\right) & \text{ if } |x - \pi(x)|< \frac{1}{2}\varepsilon W\left(\vn_{\pi(x)}^{\varepsilon, W}\right), x \not\in P,\\
        1  & \text{ if } |x - \pi(x)|> \varepsilon W\left(\vn_{\pi(x)}^{\varepsilon, W}\right), x \in P,\\
        0 & \text{ if } |x - \pi(x)|> \varepsilon W\left(\vn_{\pi(x)}^{\varepsilon, W}\right), x \not\in P.
    \end{array}\right.
\end{align*}
We are now able to analyse the asymptotic behaviour of  $|P_{\varepsilon, \Xi, W}|_{\Bp}$ for $\varepsilon \to 0$ and fixed $\Xi, W$. Moreover, we will see, that the estimates even hold independently of $\Xi$ and $W$ if $\|\Xi'\|_\infty$ and $\|W\|_\infty$ satisfy certain growth conditions.

\begin{proposition}\label{prop:PolygonConvergence}
Let $\psi\in L^2(\R^2)$ satisfy the assumptions of Proposition \ref{prop:HeavisideConvergence}.
Let $\aninorm$ be a norm on $\R^2$ and let $\omega$ be the directional weight associated with $\aninorm$.
Assume that $P$ is a polygon, $\Xi$ is a directional transition profile, and $W$ is a directional weight.
Then, we have that
\begin{align}\label{assertionOfPolygonConvergenceProp}
|P_{\varepsilon, \Xi, W}|_{\Bp}^2 = \int_{(0,1)^2} \aninorm(\nabla P_{\varepsilon, \Xi, W})^2 \, \,\mathrm{d}x + o\left(\frac{1}{\varepsilon}\log_2\left(\frac{1}{\varepsilon}\right)^{-1}\right), \text{ for } \varepsilon \to 0.
\end{align}
The implicit constant is independent of $\Xi$ and $W$, if
\begin{align} \label{eq:specialAssumption}
\|W\|_\infty \leq \frac{1}{2}\varepsilon^{-\frac{1}{32}},\quad \text{ and } \quad \|\Xi'\|_\infty \left\| \frac{1}{W}\right\|_\infty \leq 2.
\end{align}

\end{proposition}
To prove the result above, we require the following auxiliary result that will be proved in Appendix \ref{app:ProofOfCovering}.
\begin{figure}[htb]
    \centering
    \includegraphics[width = 0.4\textwidth]{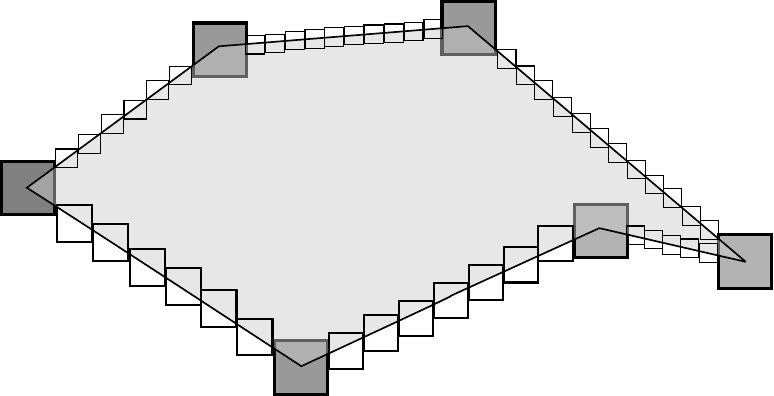}
    \caption{Covering of the boundary of a polygon as in Proposition \ref{prop:covering}}
    \label{fig:my_label}
\end{figure}

\begin{proposition}\label{prop:covering}
Let $N \in \N$ and let $P$ be an $N$-gon with vertices $x_1^P, \dots, x_N^P$.
Then, there exists an $r_0 >0$ and $c_0 \in (0, 1/2]$ such that for all $r \in (0, r_0)$
there are $L_1,\dots, L_N\in \N$ with $\ell(P)/r \geq \sum_{i=1}^N L_i \geq \ell(P)/(4r)$, and $(z_{k,i})_{k = 1,\dots, L_i, i = 1,\dots, N} \subset \partial P$, and $c_0 r/4 \leq s_1,\dots,  s_N \leq c_0 r/2$ such that
\begin{align} \label{eq:theCover}
\partial P &\subset \dot{\bigcup}_{i = 1,\dots, N} \left( C(x_i, r)   \ \dot{\cup} \  \dot{\bigcup}_{k = 1,\dots, L_i} C(z_{k,i}, s_i)\right), \text{ and }\\
g_i &\subset  C(x_i, r) \ \dot{\cup} \ C(x_{i+1}, r) \ \dot{\cup} \ \dot{\bigcup}_{k = 1,\dots, L_i} C(z_{k,i}, s_i), \text{ for all } i = 1, \dots, N, \nonumber
\end{align}
where $C(z, s) = z + (-s, s]^2$ is the semi-open cube of radius $s$ with center $z$ and $g_i$ is the line segment between $x_i$ and $x_{i+1}$.
Moreover, $\chi_{P \cap  C(z_{k,i}, s_k+ c_0 r/4)}( \cdot -z_{k,i})$ is a rotated Heaviside function on $[-s_k- c_0 r/4, s_k+ c_0 r/4]^2$ with the same normal direction as $\partial P$ at $z_{k,i}$ for all $k = 1, \dots,  L$.
\end{proposition}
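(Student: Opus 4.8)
The plan is to reduce the assertion to $N$ essentially one-dimensional tiling problems, one per edge, glued together away from the vertices. First I would fix a \emph{feature size} $\rho = \rho(P) > 0$, namely the minimum of the shortest edge length $\min_i \ell(g_i)$, the smallest $\ell^\infty$-distance between two non-adjacent edges, and the smallest $\ell^\infty$-distance between a vertex and an edge not containing it. I would then set $r_0 = c_P\,\rho$ and choose the constant $c_0$ small, both depending only on the smallest interior angle of $P$, and fix $r \in (0, r_0)$. With such a choice: (i) the vertex cubes $C(x_1, r), \dots, C(x_N, r)$ are pairwise disjoint; (ii) for any $y$ in the relative interior of $g_i$ with $\|y - x_i\|_\infty, \|y - x_{i+1}\|_\infty > r$, the cube $y + (-c_0 r, c_0 r]^2$ contains no vertex and meets no edge other than $g_i$, so that $\chi_{P \cap (y + (-c_0 r, c_0 r]^2)}(\cdot - y)$ is, on $[-c_0 r, c_0 r]^2$, the restriction of $\chi_{\{\langle x, \vn_y\rangle \le 0\}}$ or of its complement, i.e.\ a rotated Heaviside function with the normal direction of $\partial P$ at $y$; and (iii) since $\ell(g_i) \ge \rho > r/c_P \gg r$, every edge is long at scale $r$, so no edge is entirely absorbed by its two vertex cubes.

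The per-edge construction rests on an elementary tiling fact: for a unit vector $v$ and $s > 0$, the half-open axis-aligned cubes $\{\,2s\,\|v\|_\infty^{-1}\,k\,v + (-s, s]^2 : k \in \Z\,\}$ are pairwise disjoint and their union contains the line $\R v$; in fact the $k$-th of them meets $\R v$ in exactly the arc-length interval $(k - \tfrac12, k + \tfrac12]\cdot 2s\,\|v\|_\infty^{-1}$. Fix an edge $g_i$ from $x_i$ to $x_{i+1}$ with direction $v_i$. One checks that each of $g_i \cap C(x_i, r)$ and $g_i \cap C(x_{i+1}, r)$ has arc-length $r\,\|v_i\|_\infty^{-1}$, so the part of $g_i$ not yet covered by the two vertex cubes is a sub-segment of arc-length $\ell_i' := \ell(g_i) - 2r\,\|v_i\|_\infty^{-1} > 0$. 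I would then pick $s_i$ inside the prescribed band so that $\ell_i'\,\|v_i\|_\infty/(2 s_i)$ is an integer $L_i$ — possible since the band has ratio $2$ and, $r$ being small, this quantity sweeps an interval of length at least $1$ — and translate the tiling of $\R v_i$ so that its first retained cube abuts $C(x_i, r)$ exactly along $g_i$; by the half-open convention this can be arranged with neither gap nor overlap, and then the $L_i$-th retained cube abuts $C(x_{i+1}, r)$ as well. The centers of those $L_i$ cubes are the required points $z_{1, i}, \dots, z_{L_i, i} \in \partial P$.

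It then remains to verify the four assertions. Disjointness of the $z$-cubes of a single edge is the tiling fact; disjointness of $z$-cubes of different edges, and of a $z$-cube from a vertex cube, follows from (i)--(ii) together with the observation that each $z$-cube of $g_i$ has radius $s_i < c_0 r$ and sits within $\ell^\infty$-distance $s_i$ of a point of $g_i$ that is outside all vertex cubes and $\rho$-far from all other edges — the only delicate case, the two extreme $z$-cubes versus $C(x_i, r)$ and $C(x_{i+1}, r)$, being settled by the gap-free abutment. The covering relations in \eqref{eq:theCover} hold because, edge by edge, the retained cubes fill the complement in $g_i$ of its two vertex cubes without a gap, while $\partial P = \bigcup_i g_i$. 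The rotated-Heaviside property at $z_{k,i}$ is exactly (ii) applied at $y = z_{k,i}$, which is legitimate because $s_k + c_0 r/4 \le 3c_0 r/4 < c_0 r$ and the abutment keeps every $z_{k,i}$ at $\ell^\infty$-distance $\ge r$ from both $x_i$ and $x_{i+1}$.

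The step I expect to be the main obstacle is the numerology. The single free scale $s_i$ and the single free translation of the per-edge tiling are simultaneously constrained by (a) gap-free and overlap-free abutment of the extreme $z$-cubes to the vertex cubes, (b) the $c_0 r/4$-enlargements of those extreme cubes still missing the vertices (which is also what forces $c_0$ small relative to the smallest interior angle), and (c) the count being exactly $L_i = \ell_i'\,\|v_i\|_\infty/(2 s_i)$; and then $\sum_i L_i = \sum_i \ell_i'\,\|v_i\|_\infty/(2 s_i)$ — which, with each $s_i$ of order $r$ and the $\mathcal{O}(Nr)$ of arc-length absorbed by the vertex cubes negligible, is comparable to $\ell(P)/r$ — must be forced into $[\ell(P)/(4r), \ell(P)/r]$, and this pins down the admissible ranges of $c_P$, of $c_0$, and of the band for the $s_i$. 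Each individual check is elementary; coordinating one choice of constants to meet all of them at once is where the real work lies, everything else being the one-dimensional tiling picture together with the locality secured by staying below the feature size of $P$.
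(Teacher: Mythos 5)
Your proposal is correct and follows essentially the same route as the paper's proof: the same feature-size constants ($r_0$ from the minimal edge length and the minimal non-adjacent-edge distance, $c_0$ proportional to the sine of the smallest angle), the same per-edge one-dimensional tiling by half-open cubes abutting the two vertex cubes (your $\|v_i\|_\infty$-normalised spacing is exactly the paper's choice of the dominant coordinate $\kappa_i$), and the same tuning of $s_i$ within the band $[c_0r/4,\,c_0r/2]$ so that $L_i=(\kappa_i-2r)/(2s_i)$ is an integer. The one point you under-attribute — the paper separates the extreme cubes of \emph{adjacent} edges via the law of sines at the shared vertex, which is the real reason $c_0\lesssim\sin(\alpha)$, rather than the enlarged cubes merely missing the vertices — is already contained in your assertion (ii), so the argument is complete in substance.
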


\begin{proof}[Proof of Proposition \ref{prop:PolygonConvergence}]
Assume that $P$ is an $N$-gon and $\Xi$ is a directional transition profile. 
Let $W$ be a directional weight such that $W(\mathbb{S}^1) \subset [a,b]$, for $0<a<b<\infty$.

Let $r_0$ and $c_0$ be as in Proposition \ref{prop:PolygonConvergence}. Let $r \coloneqq 4 \varepsilon^{1/16}/c_0$ and $\varepsilon>0$ so small that $r < \min\{r_0, \mathrm{dist}(P, \R^2 \setminus (0,1)^2)\}$.
Let $L_1,\dots, L_N \in \N$, and  $(z_{k,i})_{k = 1,\dots, L_i, i = 1,\dots, N} \subset \partial P$, and $c_0 r/4 \leq s_1,\dots,  s_N \leq c_0 r/2$ be as in Proposition \ref{prop:covering}.
We define
$$
G^r \coloneqq [0,1]^2 \setminus \left(\dot{\bigcup}_{i = 1,\dots, N} \left(C(x_i, r) \  \dot{\cup}\  \dot{\bigcup}_{k = 1,\dots, L_i} C(z_{k,i}, s_i) \right)\right).
$$
We first prove that
\begin{align}\label{eq:geometricCondition}
\lambda\left(G^r \cap \left(\partial P + B_{\|W\|_\infty \varepsilon}(0)\right) \right) \lesssim \varepsilon^{2-1/16},
\end{align}
where $\lambda$ denotes the Lebesgue measure on $\R^2$. Given that, for $\varepsilon < 1$, $r > s_i \geq \varepsilon$, it is clear from Figure \ref{fig:my_label} that 
$\left|G^r \cap \left(\partial P + B_{\|W\|_\infty \varepsilon}(0)\right) \right|$ is bounded by the measure of $\mathcal{O}(\varepsilon^{-1/16})$ triangles with height at most $\varepsilon$ and base at most $\varepsilon$. This shows \eqref{eq:geometricCondition}.

We have that
\begin{align*}
|P_{\varepsilon, \Xi, W}|_{\Bp}^2 & = \sum_{i = 1}^N \sum_{k = 1}^{L_i} \sum_{\iota = -1,1}  \int_0^\mathfrak{a} a^{-2} \int_{-\mathfrak{s}}^{\mathfrak{s}} \int_{C(z_{k,i},s_i)} \left|\left \langle P_{\varepsilon, \Xi, W}, \psi_{a,s,t,\iota}^{\omega, per}\right \rangle\right|^2 a^{-3} \, \,\mathrm{d}t \, \mathrm{d}s \, \mathrm{d}a\\
& \quad + \sum_{i = 1}^{N} \sum_{\iota = -1,1}  \int_0^\mathfrak{a} a^{-2} \int_{-\mathfrak{s}}^{\mathfrak{s}} \int_{C(x_i,r)} \left|\left \langle  P_{\varepsilon, \Xi, W} , \psi_{a,s,t,\iota}^{\omega, per}\right \rangle\right|^2 a^{-3} \, \,\mathrm{d}t \, \mathrm{d}s \, \mathrm{d}a\\
&\quad \quad+ \sum_{\iota = -1,1}   \int_0^\mathfrak{a} a^{-2} \int_{-\mathfrak{s}}^{\mathfrak{s}}  \int_{G^r} \left|\left \langle  P_{\varepsilon, \Xi, W} , \psi_{a,s,t,\iota}^{\omega, per}\right \rangle\right|^2 a^{-3} \, \,\mathrm{d}t \, \mathrm{d}s \, \mathrm{d}a\\
& \geq \sum_{i = 1}^{N} \sum_{k = 1}^{L_i} \sum_{\iota = -1,1}  \int_0^\mathfrak{a} a^{-2} \int_{-\mathfrak{s}}^{\mathfrak{s}} \int_{C(z_{k,i},s_i)} \left|\left \langle  P_{\varepsilon, \Xi, W} , \psi_{a,s,t,\iota}^{\omega, per}\right \rangle\right|^2  a^{-3} \, \,\mathrm{d}t \, \mathrm{d}s \, \mathrm{d}a \eqqcolon \mathrm{I}.
\end{align*}
There exists an $R>0$ such that $\suppp \psi_{a,s,t,\iota}^{\omega} \subset B_{\sqrt{a}R}(t)$ for all $a\in (0, \mathfrak{a}]$, $s \in [-\mathfrak{s}, \mathfrak{s}]$,  $t \in \R^2$, and $\iota \in \{-1,1\}$.
Since $\varepsilon^{1/16} = c_0 r/4$, we conclude, from Proposition \ref{prop:covering} that all shearlets $\psi_{a,s,t,\iota}^{\omega, per}$ with $t \in C(z_{k,i},s_i)$ such that $P_{\varepsilon, \Xi, W} \neq H_{\varepsilon, \Xi(\vn_i), W(\vn_i)}^{\vn_i}(\cdot - z_{k,i})$ on $\suppp \psi_{a,s,t,\iota}^{\omega, per}$ have scale $a$ such that $\sqrt{a}R \geq \varepsilon^{1/16}-\|W\|_{\infty}\varepsilon$. Therefore, 
\begin{align*}
 \mathrm{I} = &\sum_{i = 1}^{N} \sum_{k = 1}^{L_i} \sum_{\iota = -1,1}  \int_0^\mathfrak{a} a^{-2} \int_{-\mathfrak{s}}^{\mathfrak{s}} \int_{C(z_{k,i},s_i)} \left|\left \langle  P_{\varepsilon, \Xi, W} , \psi_{a,s,t,\iota}^{\omega, per}\right \rangle\right|^2  a^{-3} \, \,\mathrm{d}t \, \mathrm{d}s \, \mathrm{d}a\\
  &= \sum_{i = 1}^{N} \sum_{k = 1}^{L_i} \sum_{\iota = -1,1}  \int_0^{\mathfrak{a}} a^{-2} \int_{-\mathfrak{s}}^{\mathfrak{s}} \int_{C(z_{k,i},s_i)} \left|\left \langle  H_{\varepsilon, \Xi(\vn_i), W(\vn_i)}^{\vn_i}(\cdot - z_{k,i}) , \psi_{a,s,t,\iota}^{\omega, per}\right \rangle\right|^2  a^{-3} \, \,\mathrm{d}t \, \mathrm{d}s \, \mathrm{d}a\\
  & \qquad + \ \sum_{i = 1}^{N} \sum_{k = 1}^{L_i} \sum_{\iota = -1,1} \int_{(\varepsilon^{1/16}-\|W\|_{\infty}\varepsilon)^2/R^2}^{\mathfrak{a}} a^{-2} \int_{-\mathfrak{s}}^\mathfrak{s} \int_{C(z_{k,i},s_i)} \left|\left \langle  P_{\varepsilon, \Xi, W}, \psi_{a,s,t,\iota}^{\omega, per}\right \rangle\right|^2 \\
  & \qquad \qquad - \left|\left \langle H_{\varepsilon, \Xi(\vn_i), W(\vn_i)}^{\vn_i}(\cdot - z_{k,i}) , \psi_{a,s,t,\iota}^{\omega, per}\right \rangle\right|^2  a^{-3} \, \,\mathrm{d}t \, \mathrm{d}s \, \mathrm{d}a.
\end{align*}
Assumption \ref{eq:theDecayAssumption} implies that the underlying shearlet system satisfies a Bessel inequality, see Appendix \ref{app:Bessel}. Hence,
\begin{align*}
     &\ \sum_{i = 1}^{N} \sum_{k = 1}^{L_i} \sum_{\iota = -1,1} \int_{(\varepsilon^{1/16}-\|W\|_{\infty}\varepsilon)^2/R^2}^{\mathfrak{a}} a^{-2} \int_{-\mathfrak{s}}^\mathfrak{s} \int_{C(z_{k,i},s_i)} \left|\left \langle  P_{\varepsilon, \Xi, W}, \psi_{a,s,t,\iota}^{\omega, per}\right \rangle\right|^2\\
     &\qquad - \left|\left \langle H_{\varepsilon, \Xi(\vn_i), W(\vn_i)}^{\vn_i}(\cdot - z_{k,i}) , \psi_{a,s,t,\iota}^{\omega, per}\right \rangle\right|^2  a^{-3} \, \,\mathrm{d}t \, \mathrm{d}s \, \mathrm{d}a\\
& \lesssim
    \ \sum_{i = 1}^{N} \sum_{k = 1}^{L_i} \sum_{\iota = -1,1}   (\varepsilon^{1/16}-\|W\|_{\infty}\varepsilon)^{-4} \int_{0}^{\mathfrak{a}} \int_{-\mathfrak{s}}^\mathfrak{s} \int_{C(z_{k,i},s_i)} \left| \left|\left \langle  P_{\varepsilon, \Xi, W}, \psi_{a,s,t,\iota}^{\omega, per}\right \rangle\right|^2 \right. \\
     &\qquad - \left. \left|\left \langle H_{\varepsilon, \Xi(\vn_i), W(\vn_i)}^{\vn_i}(\cdot - z_{k,i}) , \psi_{a,s,t,\iota}^{\omega, per}\right \rangle\right|^2 \right| a^{-3} \, \,\mathrm{d}t \, \mathrm{d}s \, \mathrm{d}a\\
& \lesssim \ \sum_{i = 1}^{N} \sum_{k = 1}^{L_i} \varepsilon^{-\frac14} \left\| P_{\varepsilon, \Xi, W} - H_{\varepsilon, \Xi(\vn_i), W(\vn_i)}^{\vn_i}(\cdot - z_{k,i})\right\|_{L^2((0,1)^2)}^2\\
& \lesssim \sum_{i = 1}^{N} \sum_{k = 1}^{L_i} \varepsilon^{-\frac14} =  \mathcal{O}\left(\varepsilon^{-\frac{1}{16}} \varepsilon^{-\frac14}\right) = \mathcal{O}\left(\varepsilon^{-\frac{1}{2}}\right), \text{ for } \varepsilon \to 0,
\end{align*}
where the implicit constants are independent of $\|W\|_\infty$ if $\|W\|_\infty\varepsilon \leq \varepsilon^{1/16}/2$. This is always satisfied if \eqref{eq:specialAssumption} holds. We get by the properties of the covering and Proposition \ref{prop:HeavisideConvergence} that
\begin{align*}
\mathrm{I}& =  \sum_{i = 1}^{N} \sum_{k = 1}^{L_i} \sum_{\iota = -1,1}  \int_0^\mathfrak{a} a^{-2} \int_{-\mathfrak{s}}^{\mathfrak{s}} \int_{C(z_{k,i},s_i)} \left|\left \langle H_{\varepsilon, \Xi(\vn_i), W(\vn_i)}^{\vn_i}(\cdot - z_{k,i}), \psi_{a,s,t,\iota}^\omega\right \rangle\right|^2  a^{-3} \, \,\mathrm{d}t \, \mathrm{d}s \, \mathrm{d}a + \mathcal{O}\left(\varepsilon^{-\frac{1}{2}}\right)\\
 & = \sum_{i = 1}^{N} \sum_{k = 1}^{L_i} \sum_{\iota = -1,1}  \int_0^\mathfrak{a} a^{-2} \int_{-\mathfrak{s}}^{\mathfrak{s}} \int_{[-s_i, s_i]^2} \left|\left \langle H_{\varepsilon, \Xi(\vn_i), W(\vn_i)}^{\vn_i}, \psi_{a,s,t,\iota}^\omega\right \rangle\right|^2  a^{-3} \, \,\mathrm{d}t \, \mathrm{d}s \, \mathrm{d}a + \mathcal{O}\left(\varepsilon^{-\frac{1}{2}}\right)\\
 &  \geq  \sum_{i = 1}^N  \sum_{k = 1}^{L_i} \aninorm\left( \vn_{i} \right)^2 \left|H_{\varepsilon, \Xi(\vn_{i}), W(\vn_{i})}^{\vn_{i}}\right|_{H^1((-s_i,s_i)^2)}^2\\
 & \qquad + \sum_{i = 1}^{N} \sum_{k = 1}^{L_i} \left(s_i\,  o\left(\frac{1}{\varepsilon}\log_2\left(\frac{1}{\varepsilon}\right)^{-1}\right) + o\left(\frac{1}{\sqrt{\varepsilon}}\right) + \mathcal{O}\left( \frac{1}{(\varepsilon^{\frac{1}{16}} - w \varepsilon(\mathfrak{s} + 2) )^4}\right)\right), \text{ for } \varepsilon \to 0.
 \end{align*}
As $\sum_{i = 1}^{N} \sum_{k = 1}^{L_i} s_i = \mathcal{O}(1)$ and $\sum_{i = 1}^{N} \sum_{k = 1}^{L_i} = \mathcal{O}(\varepsilon^{-1/16})$, for $\varepsilon \to 0$, we have that
\begin{align*}
|P_{\varepsilon, \Xi, W}|_{\Bp}^2 &  \geq  \sum_{i = 1}^N  \sum_{k = 1}^{L_i} \aninorm\left( \vn_{i} \right)^2  \left|H_{\varepsilon, \Xi(\vn_{i}) W(\vn_{i})}^{\vn_{i}}\right|_{H^1((-s_i,s_i)^2)}^2 +   o\left(\frac{1}{\varepsilon}\log_2\left(\frac{1}{\varepsilon}\right)^{-1}\right)\\
& = \int_{(0,1)^2 \setminus (\bigcup_{i = 1}^N C(x_i,r) \cup B^r)}  \aninorm(\nabla P_{\varepsilon, \Xi, W} )(x)^2 \, \,\mathrm{d}x +   o\left(\frac{1}{\varepsilon}\log_2\left(\frac{1}{\varepsilon}\right)^{-1}\right), \text{ for } \varepsilon \to 0,
 \end{align*}
where the last line follows because, for every vector $\vec{x} \in \R^2$, we have that $\aninorm(\vec{x}) = \aninorm(\vec{x}/|\vec{x}|) \cdot |\vec{x}|$ and $\nabla P_{\varepsilon, \Xi, W} / |\nabla P_{\varepsilon, \Xi, W} | = \vn_i$ on $C(z_{k,i},s_i)$ by construction. 
Moreover, the support of $\nabla P_{\varepsilon, \Xi, W}$ is contained in an $\varepsilon\|W\|_\infty$ neighbourhood of $\partial P$. Therefore
\begin{align}\label{eq:ThisIntegralWillVanish}
 \int_{ B^r \setminus ( \partial P  + B_{\varepsilon\|W\|_\infty}(0))}  \aninorm(\nabla P_{\varepsilon, \Xi, W} )(x)^2 \, \,\mathrm{d}x = 0.
\end{align}
Additionally, $\nabla P_{\varepsilon, \Xi, W}$ exists almost everywhere and $\|\nabla P_{\varepsilon, \Xi, W}\|_\infty^2 \lesssim \|\Xi'\|_\infty^2 \|1/W\|_\infty^2  \varepsilon^{-2}$ by construction.
Therefore, by \eqref{eq:ThisIntegralWillVanish} and \eqref{eq:geometricCondition},
\begin{align}\label{eq:VanishingOfTheBrTerm}
 \int_{ B^r}  \aninorm(\nabla P_{\varepsilon, \Xi, W} )(x)^2 \, \,\mathrm{d}x =  \int_{ B^r \cap ( \partial P  + B_{\varepsilon\|W\|_\infty}(0))}  \aninorm(\nabla P_{\varepsilon, \Xi, W} )(x)^2 \, \,\mathrm{d}x = \mathcal{O}(\varepsilon^{-1/16}),
\end{align}
for $\varepsilon \to 0$ and an implicit constant independent of $W$ and $\Xi$ for sufficiently small $\varepsilon>0$, or if $\|\Xi'\|_\infty^2 \cdot \|1/W\|_\infty^2 \leq 4$, which is always satisfied if \eqref{eq:specialAssumption} holds. 
Moreover, since $r = \mathcal{O}(\varepsilon^{1/16})$, for $\varepsilon \to 0$, this yields that
\begin{align} \label{eq:estimateOnCornerPoints}
 \int_{\bigcup_{i = 1}^N C(x_i,r)}  \aninorm(\nabla P_{\varepsilon, \Xi, W} )(x)^2 \, \,\mathrm{d}x \lesssim  N\cdot \left(\|\Xi'\|_\infty^2  \|1/W\|_\infty^2  \varepsilon^{-2}\right) \cdot \left(r \varepsilon \|W\|_\infty\right) = N \cdot \mathcal{O}(\epsilon^{-31/32})
\end{align}
for $\varepsilon \to 0$, with an implicit constant independent of $W$ and $\Xi$, if $\|\Xi'\|_\infty^2 \cdot \|1/W\|_\infty^2  \cdot \|W\|_\infty \leq \varepsilon^{-1/32}$, which is the case for sufficiently small $\varepsilon>0$ and fixed $\Xi$, $W$, and always if \eqref{eq:specialAssumption} is satisfied.
Hence,
\begin{align}\label{eq:TheUpperBound}
|P_{\varepsilon, \Xi, W}|_{\Bp}^2 &  \geq  \int_{(0,1)^2}  \aninorm(\nabla P_{\varepsilon, \Xi, W} )(x)^2 \, \,\mathrm{d}x + o\left(\frac{1}{\varepsilon}\log_2\left(\frac{1}{\varepsilon}\right)^{-1}\right), \text{ for } \epsilon \to 0.
\end{align}
To establish the lower bound, we define
$$
\widetilde{G}^r \coloneqq [0,1]^2 \setminus \left(\dot{\bigcup}_{i = 1,\dots, N} \left( C(x_i, r) \ \dot{\cup} \   \dot{\bigcup}_{k = 1,\dots, L_i} C\left(z_{k,i}, s_i + \varepsilon^{1/10}/2 \right)\right)\right).
$$
Then, we have that
\begin{align*}
    |P_{\varepsilon, \Xi, W}|_{\Bp}^2 & \leq  \sum_{i = 1}^{N} \sum_{k = 1}^{L_i} \sum_{\iota = -1,1} \int_0^\mathfrak{a} a^{-2} \int_{-\mathfrak{s}}^{\mathfrak{s}} \int_{C(z_{k,i},s_i +\varepsilon^{1/10}/2)} \left|\left \langle P_{\varepsilon, \Xi, W} , \psi_{a,s,t,\iota}^{\omega, per} \right \rangle\right|^2 a^{-3} \, \mathrm{d}s \, \,\mathrm{d}t \, \mathrm{d}a\\
    & \quad + \sum_{i = 1}^{N} \sum_{\iota = -1,1}  \int_0^\mathfrak{a} a^{-2} \int_{-\mathfrak{s}}^{\mathfrak{s}}  \int_{C(x_i,r)} \left|\left \langle  P_{\varepsilon, \Xi, W} , \psi_{a,s,t,\iota}^{\omega, per}\right \rangle\right|^2  a^{-3} \, \mathrm{d}s \, \,\mathrm{d}t \, \mathrm{d}a\\
    &\quad \quad+ \sum_{\iota = -1,1}  \int_0^\mathfrak{a} a^{-2} \int_{-\mathfrak{s}}^{\mathfrak{s}} \int_{\widetilde{G}^r} \left|\left \langle  P_{\varepsilon, \Xi, W} , \psi_{a,s,t,\iota}^{\omega, per}\right \rangle\right|^2 a^{-3} \, \mathrm{d}s \, \,\mathrm{d}t \, \mathrm{d}a.
\end{align*}
By construction, for $t \in \widetilde{G}^r$, all shearlets $\psi_{a,s,t,\iota}$ such that $\suppp \psi_{a,s,t,\iota}$ intersects $\partial P + B_{ \varepsilon \|W\|_\infty}(0)$ non-trivially must satisfy $\sqrt{a}R \geq \varepsilon^{1/10}/2 - \|W\|_\infty \varepsilon$. We assume that $\varepsilon$ is so small that
$\varepsilon^{1/10}/2 - \|W\|_\infty \varepsilon \geq \varepsilon^{1/10}/4$. This is always satisfied independent of $W$ if \eqref{eq:specialAssumption} holds. Thus, we have $a^{-2} \leq 256 R^4  \varepsilon^{-4/10}$ if  $\suppp \psi_{a,s,t,\iota} \cap (\partial P + B_{ \varepsilon \|W\|_\infty}(0)) \neq \emptyset$ .
Invoking again the Bessel inequality of the periodised shearlet system, see Appendix \ref{app:Bessel}, this implies that
\begin{align*}
&\sum_{\iota = -1,1}  \int_0^\mathfrak{a} a^{-2} \int_{-\mathfrak{s}}^{\mathfrak{s}} \int_{\widetilde{G}^r} \left|\left \langle  P_{\varepsilon, \Xi, W} , \psi_{a,s,t,\iota}^{\omega, per}\right \rangle\right|^2 a^{-3}\, \,\mathrm{d}t \, \mathrm{d}s \, \mathrm{d}a  \\
& \qquad \leq256 R^4  \varepsilon^{-4/10}  \sum_{\iota = -1,1}  \int_{0}^\mathfrak{a}\int_{-\mathfrak{s}}^{\mathfrak{s}} \int_{\widetilde{G}^r} \left|\left \langle  P_{\varepsilon, \Xi, W} , \psi_{a,s,t,\iota}^{\omega, per}\right \rangle\right|^2 a^{-3}\, \,\mathrm{d}t \, \mathrm{d}s \, \mathrm{d}a  = o\left(\frac{1}{\varepsilon}\log_2\left(\frac{1}{\varepsilon}\right)^{-1}\right), \text{ for } \varepsilon \to 0.
\end{align*}
Moreover, for $\varepsilon \to 0$,
\begin{align*}
    & |P_{\varepsilon, \Xi, W}|_{\Bp}^2\\ & \leq \sum_{i = 1}^{N} \sum_{k = 1}^{L_i} \sum_{\iota = -1,1} \int_0^\mathfrak{a} a^{-2} \int_{-\mathfrak{s}}^{\mathfrak{s}} \int_{C(z_{k,i},s_i + \varepsilon^{1/10}/2)} \left|\left \langle P_{\varepsilon, \Xi, W} , \psi_{a,s,t,\iota}^{\omega, per} \right \rangle\right|^2 a^{-3} \, \mathrm{d}s \, \,\mathrm{d}t \, \mathrm{d}a +   o\left(\frac{1}{\varepsilon}\log_2\left(\frac{1}{\varepsilon}\right)^{-1}\right)\\
   & = \sum_{i = 1}^{N} \sum_{k = 1}^{L_i} \sum_{\iota = -1,1} \int_0^{\varepsilon^{1/8}/(4R^2)} a^{-2} \int_{-\mathfrak{s}}^{\mathfrak{s}} \int_{C(z_{k,i},s_i + \varepsilon^{1/10}/2)} \left|\left \langle P_{\varepsilon, \Xi, W} , \psi_{a,s,t,\iota}^{\omega, per} \right \rangle\right|^2 a^{-3} \, \mathrm{d}s \, \,\mathrm{d}t \, \mathrm{d}a\\
      & \qquad +  \sum_{i = 1}^{N} \sum_{k = 1}^{L_i} \sum_{\iota = -1,1} \int_{\varepsilon^{1/8}/(4R^2)}^{\mathfrak{a}} a^{-2} \int_{-\mathfrak{s}}^{\mathfrak{s}} \int_{C(z_{k,i},s_i + \varepsilon^{1/10}/2)} \left|\left \langle P_{\varepsilon, \Xi, W} , \psi_{a,s,t,\iota}^{\omega, per} \right \rangle\right|^2 a^{-3} \, \mathrm{d}s \, \,\mathrm{d}t \, \mathrm{d}a +   o\left(\frac{1}{\varepsilon}\log_2\left(\frac{1}{\varepsilon}\right)^{-1}\right)\\
    & \eqqcolon \mathrm{II} + \mathrm{III} +   o\left(\frac{1}{\varepsilon}\log_2\left(\frac{1}{\varepsilon}\right)^{-1}\right).
      \end{align*}
We have that, if $a \leq \varepsilon^{1/8}/(4R^2)$ then $\sqrt{a}R \leq \varepsilon^{1/16}/2$ and hence
$$
\suppp \psi_{a,s,t,\iota}^{\omega, per} \subset C(z_{k,i},s_i + \varepsilon^{1/10}/2 + \varepsilon^{1/16}/2) \subset C(z_{k,i},s_i + c_0 r/4).
$$
Therefore, by Proposition \ref{prop:covering},
\begin{align}\label{eq:FormOfIIInMainResult}
\mathrm{II} =  \sum_{i = 1}^{N} \sum_{k = 1}^{L_i} \sum_{\iota = -1,1} \int_0^{\varepsilon^{1/8}/(4R^2)} a^{-2} \int_{-\mathfrak{s}}^{\mathfrak{s}} \int_{C(z_{k,i},s_i + \varepsilon^{1/10}/2)}  \left|\left \langle H_{\varepsilon, \Xi(\vn_i), W(\vn_i)}^{\vn_i}, \psi_{a,s,t,\iota}^\omega\right \rangle\right|^2 a^{-3} \, \mathrm{d}s \, \,\mathrm{d}t \, \mathrm{d}a.
\end{align}
Moreover, 
\begin{align*}
\mathrm{III} &\lesssim \varepsilon^{-\frac{1}{2}}\sum_{i = 1}^{N} \sum_{k = 1}^{L_i} \sum_{\iota = -1,1} \int_{\varepsilon^{1/8}/(4R^2)}^{\mathfrak{a}} \int_{-\mathfrak{s}}^{\mathfrak{s}} \int_{C(z_{k,i},s_i + \varepsilon^{1/10}/2)} \left|\left \langle P_{\varepsilon, \Xi, W} , \psi_{a,s,t,\iota}^{\omega, per} \right \rangle\right|^2 a^{-3} \, \mathrm{d}s \, \,\mathrm{d}t \, \mathrm{d}a\\
&= \mathcal{O}(\varepsilon^{-\frac{1}{2}}), \text{ for } \varepsilon \to 0,
\end{align*}
by the Bessel inequality of Appendix \ref{app:Bessel}.

We proceed by estimating $\mathrm{II}$. By \eqref{eq:FormOfIIInMainResult}, and Proposition \ref{prop:HeavisideConvergence}, we have that 
\begin{align*}      
     \mathrm{II} &  \leq  \sum_{i = 1}^{N} \sum_{k = 1}^{L_i} \aninorm(\vn_i) ^2 \left| H_{\varepsilon, \Xi(\vn_i), W(\vn_i)}^{\vn_i}\right|_{H^1((-s_i + \varepsilon^{1/10}/2,s_i + \varepsilon^{1/10}/2)^2)}^2\\
     & \leq \sum_{i = 1}^{N}  \left(1+ \frac{\varepsilon^{1/10}/2}{s_i}\right) \sum_{k = 1}^{L_i} \aninorm(\vn_i)^2  \left| H_{\varepsilon, \Xi(\vn_i), W(\vn_i)}^{\vn_i}\right|_{H^1((-s_i,s_i)^2)}^2\\
     & \leq \sum_{i = 1}^{N}  \sum_{k = 1}^{L_i} \aninorm(\vn_i)^2  \left| H_{\varepsilon, \Xi(\vn_i), W(\vn_i)}^{\vn_i}\right|_{H^1((-s_i,s_i)^2)}^2  + o\left(\frac{1}{\varepsilon}\log_2\left(\frac{1}{\varepsilon}\right)^{-1}\right) , \text{ for } \varepsilon \to 0,
 \end{align*}
where the last estimate follows since ${\varepsilon^{1/10}}/{ s_i} = \mathcal{O}(\varepsilon^{3/80})$ and $\sum_{i = 1}^{N}  \sum_{k = 1}^{L_i} \aninorm(\vn_i)^2  | H_{\varepsilon, \Xi(\vn_i), W(\vn_i)}^{\vn_i}|_{H^1((-s_i,s_i)^2)}^2 = \mathcal{O}(1/\varepsilon)$, for $\varepsilon \to 0$, where the implicit constant is independent of $\Xi$ and $W$ if $\|\Xi'\|\cdot \|1/W\|_\infty \leq 2$, i.e., always if \eqref{eq:specialAssumption} is satisfied.
We conclude that 
\begin{align}
      \mathrm{II}  & \leq \int_{(0,1)^2 \setminus (\bigcup_{i = 1}^N C(x_i,r) \cup \widetilde{G}^r)}  \aninorm(\nabla P_{\varepsilon, \Xi, W} )(x)^2 \, \,\mathrm{d}x +  o\left(\frac{1}{\varepsilon}\log_2\left(\frac{1}{\varepsilon}\right)^{-1}\right) , \text{ for } \varepsilon \to 0.
 \end{align}
Invoking \eqref{eq:estimateOnCornerPoints}, we observe that
\begin{align*}
|P_{\varepsilon, \Xi, W}|_{\Bp}^2 & \leq \int_{(0,1)^2 \setminus \widetilde{G^r}}  \aninorm(\nabla P_{\varepsilon, \Xi, W} )(x)^2 \, \,\mathrm{d}x +   o\left(\frac{1}{\varepsilon}\log_2\left(\frac{1}{\varepsilon}\right)^{-1}\right) , \text{ for } \varepsilon \to 0.
\end{align*}
Since $\widetilde{G}^r \subset G^r$, we conclude with \eqref{eq:VanishingOfTheBrTerm} that 
\begin{align*}
|P_{\varepsilon, \Xi, W}|_{\Bp}^2 & \leq \int_{(0,1)^2}  \aninorm(\nabla P_{\varepsilon, \Xi, W} )(x)^2 \, \,\mathrm{d}x +   o\left(\frac{1}{\varepsilon}\log_2\left(\frac{1}{\varepsilon}\right)^{-1}\right) , \text{ for } \varepsilon \to 0.
\end{align*}
Together with \eqref{eq:TheUpperBound} we obtain \eqref{assertionOfPolygonConvergenceProp}. We observed in the course of the proof that \eqref{eq:specialAssumption} implies that the implicit constants are independent of $\Xi$ and $W$ if \eqref{eq:specialAssumption} is satisfied.
\end{proof}

Proposition \ref{prop:PolygonConvergence} shows that for functions of the form $P_{\varepsilon, \Xi, W}$:
$$
\varepsilon \cdot \left| |P_{\varepsilon, \Xi, W}|_{\Bp}^2 -  \int_{(0,1)^2}  \aninorm(\nabla P_{\varepsilon, \Xi, W} )(x)^2 \, \,\mathrm{d}x \right| \to 0, \text{ for } \varepsilon \to 0.
$$
Hence,
$$
    \left|\mathrm{SGL}^\omega_\varepsilon(P_{\varepsilon, \Xi, W}) - \mathrm{GL}_\varepsilon^{\aninorm}(P_{\varepsilon, \Xi, W})\right| \to 0 , \text{ for } \varepsilon \to 0
$$
with constants independent of $\Xi, W$ if \eqref{eq:specialAssumption} holds.
It was demonstrated in \cite[Proposition 4.10]{braides1998approximation} that there exists a sequence of functions $(u_\varepsilon)_{\varepsilon >0}$ of the form $(P_{\varepsilon, \Xi_\varepsilon, W_\varepsilon})_{\varepsilon >0}$,
with $\min W_\varepsilon \leq \|\Xi_\varepsilon'\|_\infty \leq \max W_\varepsilon$ such that
\begin{align*}
    \lim_{\varepsilon \to 0}\mathrm{GL}_\varepsilon^{\aninorm}(u_\varepsilon) = P_\aninorm(\chi_P).
\end{align*}
Moreover, $\|\Xi_\varepsilon'\|_\infty \to \infty $ at any sufficiently slow rate. In particular, we can choose $\|\Xi_\varepsilon'\|_\infty  \leq \varepsilon^{-\frac{1}{128}}$. This yields existence of a recovery sequence.

\subsection{Recovery sequences for characteristic functions of sets with finite perimeter}\label{sec:recSequenceForFinitePerimeter}

The standard method for the construction of recovery sequences in $\Gamma$-convergence arguments is to establish lower semicontinuity of the limit functional and then reduce the problem to
finding a recovery sequence for functions from a dense, but more accessible space, only. Indeed the anisotropic perimeter functional is lower semi-continuous \cite[Theorem 20.1]{maggi2012sets} and the set of polygons is dense in the set of sets of finite perimeter. This yields the following proposition.

\begin{proposition}\label{prop:PieceSmoothConvergence}
Let $\psi \in L^2(\R^2)$ satisfy the assumptions of Proposition \ref{prop:HeavisideConvergence}.
Let $\aninorm$ be a norm on $\R^2$ and let $\omega$ be a directional weight associated with $\aninorm$ .
Assume that $D \subset (0,1)^2$ is a set of finite perimeter. Then, there exists a sequence $(u_\varepsilon)_{\varepsilon>0} \subset H^1((0,1)^2)$ with $u_\varepsilon \to \chi_D$ for $\varepsilon \to 0$:
\begin{align*}
\limsup_{\varepsilon>0} \mathrm{SGL}^\omega_\varepsilon(u_\varepsilon) \leq P_\aninorm(\chi_D).
\end{align*}
\end{proposition}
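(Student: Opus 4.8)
The plan is to construct the recovery sequence for $\chi_D$ out of the recovery sequences for polygons produced in the discussion following Proposition~\ref{prop:PolygonConvergence}, using the density of polygons among sets of finite perimeter, the lower semicontinuity of $P_\Omega$, and a diagonal extraction. First I would note that $P_\Omega(\chi_D)<\infty$: since $D$ has finite perimeter, $\chi_D\in SBV((0,1)^2)$, its jump set $S(\chi_D)$ has finite $\mathbb{H}_1$-measure, and $\Omega(\vn)$ is bounded on $\mathbb{S}^1$, so $P_\Omega(\chi_D)=c\int_{S(\chi_D)}\Omega(\vn_x)\,\mathrm{d}\mathbb{H}_1<\infty$. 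Hence the asserted inequality is a genuine statement, and it will in fact suffice to produce $(u_\varepsilon)_{\varepsilon>0}\subset H^1_0((0,1)^2)$ with $u_\varepsilon\to\chi_D$ in $L^1$ and $\mathrm{SGL}^\omega_\varepsilon(u_\varepsilon)\to P_\Omega(\chi_D)$.

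The first step is to approximate $D$ by polygons. Invoking the density of polygons in the class of sets of finite perimeter, together with an interior retraction of $D$ into the open square, I would produce polygons $P_j\Subset(0,1)^2$ with $\chi_{P_j}\to\chi_D$ in $L^1((0,1)^2)$ and $\limsup_{j\to\infty}P_\Omega(\chi_{P_j})\le P_\Omega(\chi_D)$. Since $P_\Omega$ is lower semicontinuous with respect to $L^1$-convergence \cite[Theorem 20.1]{maggi2012sets}, one also gets $\liminf_{j\to\infty}P_\Omega(\chi_{P_j})\ge P_\Omega(\chi_D)$, whence $P_\Omega(\chi_{P_j})\to P_\Omega(\chi_D)$.

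Next, for each fixed $j$, I would invoke the discussion following Proposition~\ref{prop:PolygonConvergence} --- which rests on Proposition~\ref{prop:PolygonConvergence} together with \cite[Proposition 4.10]{braides1998approximation} --- to obtain a sequence $(u^{(j)}_\varepsilon)_{\varepsilon>0}\subset H^1_0((0,1)^2)$ of functions of the form $P_{\varepsilon,\Xi_\varepsilon,W_\varepsilon}$ satisfying \eqref{eq:specialAssumption} (for instance with $\|\Xi_\varepsilon'\|_\infty\le\varepsilon^{-1/128}$), such that $u^{(j)}_\varepsilon\to\chi_{P_j}$ in $L^1$ as $\varepsilon\to0$, each $u^{(j)}_\varepsilon$ lies in $\mathcal{B}_\Omega$ for $\varepsilon$ small (by Proposition~\ref{prop:PolygonConvergence} and the choice of $U$ in the definition of $\mathcal{B}_\Omega$), and $\mathrm{GL}^\Omega_\varepsilon(u^{(j)}_\varepsilon)\to P_\Omega(\chi_{P_j})$. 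Since Proposition~\ref{prop:PolygonConvergence} gives $|u^{(j)}_\varepsilon|_{\Bp}^2-\int_{(0,1)^2}\Omega(\nabla u^{(j)}_\varepsilon)^2\,\mathrm{d}x=o\bigl(\varepsilon^{-1}\log_2(1/\varepsilon)^{-1}\bigr)$, one has $\mathrm{SGL}^\omega_\varepsilon(u^{(j)}_\varepsilon)-\mathrm{GL}^\Omega_\varepsilon(u^{(j)}_\varepsilon)=\varepsilon\bigl(|u^{(j)}_\varepsilon|_{\Bp}^2-\int_{(0,1)^2}\Omega(\nabla u^{(j)}_\varepsilon)^2\,\mathrm{d}x\bigr)=o\bigl(\log_2(1/\varepsilon)^{-1}\bigr)=o(1)$, and therefore $\mathrm{SGL}^\omega_\varepsilon(u^{(j)}_\varepsilon)\to P_\Omega(\chi_{P_j})$ as $\varepsilon\to0$. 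I would then diagonalise: choosing $\varepsilon_j\downarrow0$ so that $\|u^{(j)}_\varepsilon-\chi_{P_j}\|_{L^1((0,1)^2)}<1/j$ and $|\mathrm{SGL}^\omega_\varepsilon(u^{(j)}_\varepsilon)-P_\Omega(\chi_{P_j})|<1/j$ for all $\varepsilon<\varepsilon_j$, and setting $u_\varepsilon:=u^{(j)}_\varepsilon$ for $\varepsilon\in[\varepsilon_{j+1},\varepsilon_j)$, the triangle inequality and the first step give $u_\varepsilon\to\chi_D$ in $L^1$ and $\mathrm{SGL}^\omega_\varepsilon(u_\varepsilon)\to P_\Omega(\chi_D)$, which yields the claim (indeed with equality in the limit).

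The only delicate point is the first step: making the polygonal approximation compatible with the requirement $P_j\Subset(0,1)^2$ --- needed so that Propositions~\ref{prop:HeavisideConvergence} and \ref{prop:PolygonConvergence} apply and the transition functions are supported away from $\partial(0,1)^2$ --- while still controlling the anisotropic perimeter from above, i.e. controlling the boundary created when a set $D$ that may touch $\partial(0,1)^2$ is retracted into the open square. I expect this to be handled by first replacing $D$ with $D\cap(\delta,1-\delta)^2$ for a.e. small $\delta>0$, for which the relative anisotropic perimeter in $(0,1)^2$ varies continuously in $\delta$, and then applying the standard polygonal (or smooth-set) approximation with convergence of perimeter inside the open square.
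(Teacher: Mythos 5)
Your proposal is correct and follows essentially the same route as the paper, which in fact only sketches this proposition in the preceding paragraph (lower semicontinuity of $P_\Omega$ via \cite[Theorem 20.1]{maggi2012sets}, density of polygons among sets of finite perimeter, and the polygon recovery sequences built after Proposition \ref{prop:PolygonConvergence}); your write-up supplies the diagonalisation and the interior-retraction detail that the paper leaves implicit.
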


\section{The set $\mathcal{B}_{\aninorm}$}\label{sec:TheSetB}

The set $\mathcal{B}_{\aninorm}$ describes the feasible functions for the shearlet-based Ginzburg--Landau energy.
It is somewhat unsatisfying that the shearlet-based Ginzburg--Landau energy still depends---at least indirectly---on the anisotropic Sobolev seminorm.
Nonetheless, most of the relevant functions appearing in a phase-field problem are contained in $\mathcal{B}_{\aninorm}$. Indeed, it was demonstrated in Proposition \ref{prop:PolygonConvergence} that all
functions of the form $P_{\varepsilon, \Xi, W}$ for any directional transition profile $\Xi$ and any directional width $W$ are elements of $\mathcal{B}_{\aninorm}$, as long as $\varepsilon$ is sufficiently small.
Additionally, the definition of $\mathcal{B}_{\aninorm}$ demonstrates that for sufficiently small $\varepsilon>0$ also smooth perturbations of functions of the form $P_{\varepsilon, \Xi, W}$ are contained in $\mathcal{B}_{\aninorm}$.
Indeed, if $(f_\varepsilon)_{\varepsilon >0} \subset H^1((0,1)^2)$ with
$$
|f_\varepsilon|_{H^1((0,1)^2)}^2 = o\left(\varepsilon^{-1}/\log_2(2+\varepsilon^{-1})\right), \text{ for } \varepsilon \to 0,
$$
then, for sufficiently small $\varepsilon>0$, we have that $P_{\varepsilon, \Xi, W} + f_\varepsilon\in \mathcal{B}_{\aninorm}$. This is because,
$$
\left|\left|f_\varepsilon + P_{\varepsilon, \Xi, W}\right|_{\Bp} - \left| P_{\varepsilon, \Xi, W}\right|_{\Bp} \right| \leq \left|f_\varepsilon\right|_{\Bp} \leq \left|f_\varepsilon\right|_{H^1((0,1)^2)},
$$
by the reverse triangle inequality and by Proposition \ref{prop:equivalence}, particularly \eqref{eq:normInEqBdDomain}.

We observe with equation \eqref{eq:FreqRep} below (after adding weights), that we have an alternative representation of $|f|_{B}$ as $\|\mathcal{S}(f)\|_{L^2(\R^2)}$ where, for $f \in L^2(\R^2)$,
\begin{align*}
\mathcal{S}(f) \coloneqq & \ \mathcal{F}^{-1}\left(\xi \mapsto \hat{f}(\xi) \cdot \left( |\widehat{K}(\xi)|^2 + |\xi_1|^2 \cdot \int_{-\mathfrak{s}}^{\mathfrak{s}} \int_{0}^{\mathfrak{a}} |\omega(1, s)|^2 \left|\widehat{\mu}(a \xi_1, \sqrt{a}(\xi_2 + s\xi_1) )\right|^2  a^{-\frac32} \, \mathrm{d}a \, \mathrm{d}s \right.\right.\\
&\qquad  + \left. \left.|\xi_2|^2 \cdot  \int_{-\mathfrak{s}}^{\mathfrak{s}} \int_{0}^{\mathfrak{a}} |\omega(-1, s)|^2 \left|\widehat{\widetilde{\mu}}(\sqrt{a}(\xi_1 + s\xi_2) , a \xi_2)\right|^2  a^{-\frac32} \, \mathrm{d}a \, \mathrm{d}s \right)^{\frac{1}{2}}\right).
\end{align*}
If $\mathfrak{d}(\suppp u_\varepsilon, \R^2 \setminus (0,1)^2)$ is sufficiently large then there exists an $\mathfrak{a}_0>0$ such that
$\langle u_\varepsilon, \psi_{a,s,t,\iota}\rangle = 0$ for all $t \not \in (0,1)^2$ and $a < \mathfrak{a}_0$. In this case, we have that $|u_\varepsilon|_{\Bp} = \|\mathcal{S}_p (u_\varepsilon)\|_{L^2(\R^2)} + o(\|u_\varepsilon\|_{L^2(\R^2)})$, for $\varepsilon \to 0$, where
\begin{align*}
\mathcal{S}_p(f) \coloneqq & \ \mathcal{F}^{-1}\left(\xi \mapsto  \widehat{f}(\xi) \cdot \left( |\xi_1|^2 \cdot \int_{-\mathfrak{s}}^{\mathfrak{s}} \int_{0}^{\mathfrak{a}_0} |\omega(1, s)|^2 \left|\widehat{\mu}(a \xi_1, \sqrt{a}(\xi_2 + s\xi_1) )\right|^2  a^{-\frac32} \, \mathrm{d}a \, \mathrm{d}s \right.\right.\\
&\qquad  + \left. \left.|\xi_2|^2 \cdot  \int_{-\mathfrak{s}}^{\mathfrak{s}} \int_{0}^{\mathfrak{a}_0} |\omega(-1, s)|^2 \left|\widehat{\widetilde{\mu}}(\sqrt{a}(\xi_1 + s\xi_2) , a \xi_2)\right|^2  a^{-\frac32} \, \mathrm{d}a \, \mathrm{d}s \right)^{\frac{1}{2}}\right).
\end{align*}
For a sequence $u_\varepsilon \to \chi_D$, where $D \subset (0,1)^2$, it appears to be reasonable to
study the action of the pseudo-differential operator $\mathcal{S}_p$ on $u_\varepsilon$ to understand the behaviour of the $|u_\varepsilon|_{\Bp}$ norm. Due to these considerations, a promising direction to remove the restriction to be a member of the set $B_\aninorm$ could be to study the microlocal behaviour of the sequences $u_\varepsilon \to \chi_D$, where $D \subset (0,1)^2$ and such that $\sup_{\varepsilon>0} \mathrm{SGL}_\varepsilon(u_\varepsilon) < \infty$.

\section{The discrete shearlet-based Ginzburg--Landau energy}

The construction of the shearlet-based Ginzburg--Landau energy of Definition \ref{eq:ShearBasedFunctional} is based on the continuous shearlet transform. In practice, it is more appropriate to work with a discrete energy, where the integrals over the parameters in the shearlet-based Besov seminorm are replaced by sums. Indeed, we will show that the shearlet-based Ginzburg--Landau energy can be uniformly approximated by a discrete variant such that both energies have the same $\Gamma$-limit.

We start by defining a discrete shearlet-based Besov seminorm. Let $\mathfrak{a} > 1$ and $\mathfrak{s}>0$; then, for a \emph{sampling density} $c >0$, we define $J_{c} \coloneqq c\N_0 - \log_2(\mathfrak{a})$ and, for all $j\in J_c$, we set $K_{j, c} \coloneqq \left\{k \in -\mathfrak{s}  + c\Z \colon |k| \leq 2^{\frac{j}{2}}\mathfrak{s} \right\}$. Moreover, we define the following \emph{rounding operators}: for $x \in \R$,
\begin{align} \label{eq:roundingOp1}
[x]_c \coloneqq \max\big\{ x^* \in c\N_0 - \log_2(\mathfrak{a}), x^* \leq \max\{-\log_2(\mathfrak{a}), x\}\big\}
\end{align}
and, for $x \in [-2^{\frac{j}{2}}\mathfrak{s}, 2^{\frac{j}{2}}\mathfrak{s}]$,
\begin{align} \label{eq:roundingOp2}
[x]_{j,c} \coloneqq \max \left\{x^*\in K_{j, c}, x^*\leq  x\right\}.
\end{align}
Next, we define the \emph{discrete shearlet-based Besov seminorm with sampling density $c$}: let $\psi \in L^2(\R^2)$, let $\omega$ be a directional weight, $c>0$, and define $A_{1, 2^{-j}} \coloneqq A_{2^{-j}}$ and $A_{-1, 2^{-j}}\coloneqq \widetilde{A}_{2^{-j}}$; then,
\begin{align*}
|f|_{\DBp, c}^2 \coloneqq c^4\sum_{\iota = -1,1} \sum_{j \in J_c} \sum_{k \in K_{c,j}} \sum_{\substack{m \in c A_{\iota, 2^{-j}} \Z^2,\\ m \in [0,1]^2}} 2^{2j} \left|\left\langle f, \psi_{2^{-j},2^{-j/2} k, m, \iota}^{\omega, per} \right\rangle \right|^2 \in [0,\infty], \quad \text{ for } f \in L^2(\R^2).
\end{align*}
Let $T: (0,1] \to (0,1]$ be a map such that $T(x) \to 0$ for $x \to 0$. We then define the associated \emph{discrete Ginzburg--Landau energy} by
\begin{align} \label{eq:DiscrShearBasedFunctional}
\mathrm{{DSGL}}^\omega_{\varepsilon, T}(u) \coloneqq \left\{ \begin{array}{l l}
   \varepsilon| u |_{\DBp, T(\varepsilon)}^2 + \frac{1}{4\varepsilon}\int_{(0,1)^2}\mathcal{W}(u)(x) \, \,\mathrm{d}x, & \text{ if } u \in \mathcal{B}_\aninorm,\\
    \infty, & \text{ if } u \in BV \setminus \mathcal{B}_\aninorm. \end{array}\right.
\end{align}
We shall now study under what conditions on the map $T$ and a sequence $(u_\varepsilon)_{\varepsilon>0} \subset H^1((0,1)^2)$ we have that $|\mathrm{{DSGL}}^\omega_{\varepsilon, T}(u_\varepsilon) - \mathrm{{SGL}}^\omega_{\varepsilon}(u_\varepsilon) | \to 0$, for $\varepsilon \to 0$. In other words, we analyse how quickly the sampling density needs to decrease to guarantee convergence of the associated discrete shearlet-based Besov seminorm to the continuous seminorm.

This will turn out to be the case if $\|u_\varepsilon\|_{H^1((0,1)^2)} = o(T(\varepsilon)^{-1})$, for $\varepsilon \to 0$. Moreover, we will see that if $T(\varepsilon) = o(\sqrt{\varepsilon})$, for $\varepsilon \to 0$, then for any sequence $(u_\varepsilon)_{\varepsilon>0} \subset H^1((0,1)^2)$ either $|\mathrm{{DSGL}}^\omega_{\varepsilon, T}(u_\varepsilon) - \mathrm{{SGL}}^\omega_{\varepsilon}(u_\varepsilon) | \to 0$ or $\mathrm{{DSGL}}^\omega_{\varepsilon, T}(u_\varepsilon) \to \infty$ and $\mathrm{{SGL}}^\omega_{\varepsilon}(u_\varepsilon) \to \infty$, for $\varepsilon \to 0$.

\begin{theorem} \label{thm:discrete}
Let $\psi \in L^2(\R^2)$ satisfy the assumptions of Proposition \ref{prop:equivalence} and be such that
$$
|\widehat{\psi}(\xi)| \lesssim \frac{\min\{|\xi_1|^M, 1\}}{ (1+|\xi_1|^2)^{L/2} (1+|\xi_2|^2)^{L/2}}, \quad \text{ for all } \xi \in \R^2,
$$
with $M>3$ and $N >4$. Let $T: (0,1] \to (0,1]$, and let $(u_\varepsilon)_{\varepsilon>0}\subset H^1_0((0,1)^2)$ be such that
$
|u_\varepsilon|_{H^1((0,1)^2)} = o(T(\varepsilon)^{-1})$, for $\varepsilon \to 0$.
Then,
$
|\mathrm{{DSGL}}^\omega_{\varepsilon, T}(u_\varepsilon) - \mathrm{{SGL}}^\omega_{\varepsilon}(u_\varepsilon) | \to 0 \text{ for } \varepsilon \to 0.
$
If $\sup_{\varepsilon>0}\|u_\varepsilon\|_{L^2((0,1)^2)} < \infty$ and
$
\liminf_{\varepsilon>0} \sqrt{\varepsilon} \|u_\varepsilon\|_{H^1((0,1)^2)} = \infty
$
then
$
\liminf_{\varepsilon>0} \mathrm{{DSGL}}^\omega_{\varepsilon, T}(u_\varepsilon) = \infty.
$

In particular, if $T(\varepsilon) = o(\sqrt{\varepsilon})$ for $\varepsilon \to 0$, then $\mathrm{{DSGL}}^\omega_{\varepsilon, T}$ and $\mathrm{{SGL}}^\omega_{\varepsilon}$ have the same $\Gamma$-limit.

\end{theorem}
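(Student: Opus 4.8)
The plan is to deduce the whole statement from a single quadrature estimate: there is a constant $C=C(\psi,\omega,\Gamma,\Delta)>0$ such that
\begin{equation}\label{eq:quadPlan}
\big| |u|_{\DBp, c}^2 - |u|_{\Bp}^2 \big| \le C\, c\,\big( |u|_{H^1((0,1)^2)}^2 + \|u\|_{L^2((0,1)^2)}^2 \big), \qquad c\in(0,1],\quad u \in H^1_0\big((0,1)^2\big).
\end{equation}
Granting \eqref{eq:quadPlan}, the three assertions follow with little more than bookkeeping. For the first, $|\mathrm{DSGL}^\omega_{\varepsilon,T}(u_\varepsilon)-\mathrm{SGL}^\omega_\varepsilon(u_\varepsilon)|=\varepsilon\,\big||u_\varepsilon|_{\DBp,T(\varepsilon)}^2-|u_\varepsilon|_{\Bp}^2\big|\le C\varepsilon T(\varepsilon)\|u_\varepsilon\|_{H^1}^2$; using $T(\varepsilon)|u_\varepsilon|_{H^1}\to0$ and $\varepsilon\le1$, this tends to $0$ whenever $\mathrm{SGL}^\omega_\varepsilon(u_\varepsilon)=\mathcal{O}(1)$ (on $\mathcal{B}_\Omega$ a bounded energy forces $\varepsilon\|u_\varepsilon\|_{H^1}^2=\mathcal{O}(1)$, using the definition of $\mathcal{B}_\Omega$, $U(x)=o(x)$, and that $\tfrac1\varepsilon\int\mathcal{W}(u_\varepsilon)$ bounded controls $\|u_\varepsilon\|_{L^2}$), while if $\mathrm{SGL}^\omega_\varepsilon(u_\varepsilon)\to\infty$ along a subsequence then \eqref{eq:quadPlan} with $T(\varepsilon)\to0$ gives $\mathrm{DSGL}^\omega_{\varepsilon,T}(u_\varepsilon)\ge(1-o(1))\mathrm{SGL}^\omega_\varepsilon(u_\varepsilon)-o(1)\to\infty$, so both energies have the same limiting behaviour. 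For the second, if $\sup_\varepsilon\|u_\varepsilon\|_{L^2}<\infty$ and $\liminf_\varepsilon\sqrt\varepsilon|u_\varepsilon|_{H^1}=\infty$ then $|u_\varepsilon|_{H^1}\to\infty$, so $\|u_\varepsilon\|_{L^2}^2$ is eventually small relative to $|u_\varepsilon|_{H^1}^2$, \eqref{eq:normEqBdDomain} gives $|u_\varepsilon|_{\Bp}^2\ge c_0|u_\varepsilon|_{H^1}^2$ for a fixed $c_0>0$, and \eqref{eq:quadPlan} with $T(\varepsilon)\to0$ yields $|u_\varepsilon|_{\DBp,T(\varepsilon)}^2\ge\tfrac{c_0}{2}|u_\varepsilon|_{H^1}^2-\mathcal{O}(1)$, whence $\mathrm{DSGL}^\omega_{\varepsilon,T}(u_\varepsilon)\ge\tfrac{c_0}{2}\big(\sqrt\varepsilon|u_\varepsilon|_{H^1}\big)^2-\mathcal{O}(\varepsilon)\to\infty$. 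Finally, for the $\Gamma$-limit: the recovery sequences $\widetilde u_\varepsilon\in H^1_0((0,1)^2)$ of Proposition \ref{prop:PieceSmoothConvergence}/Theorem \ref{thm:main}, built from the polygonal sequences $P_{\varepsilon,\Xi_\varepsilon,W_\varepsilon}$ of \cite[Proposition 4.10]{braides1998approximation}, have $\mathrm{GL}^\Omega_\varepsilon(\widetilde u_\varepsilon)$ bounded, hence $|\widetilde u_\varepsilon|_{H^1}^2=\mathcal{O}(1/\varepsilon)$, so $T(\varepsilon)=o(\sqrt\varepsilon)$ forces $T(\varepsilon)|\widetilde u_\varepsilon|_{H^1}\to0$, the first assertion applies, and $\limsup_\varepsilon\mathrm{DSGL}^\omega_{\varepsilon,T}(\widetilde u_\varepsilon)\le\limsup_\varepsilon\mathrm{SGL}^\omega_\varepsilon(\widetilde u_\varepsilon)\le P_\Omega(\chi_D)$; while for the $\liminf$ inequality, given $u\in SBV$ with $u\in\{0,1\}$ a.e.\ and $u_\varepsilon\to u$ in $L^1$, one passes to a subsequence realising $\liminf_\varepsilon\mathrm{DSGL}^\omega_{\varepsilon,T}(u_\varepsilon)$ and refines so that either $T(\varepsilon)|u_\varepsilon|_{H^1}\to0$, whence the first assertion and Proposition \ref{prop:limInf} give $\liminf_\varepsilon\mathrm{DSGL}^\omega_{\varepsilon,T}(u_\varepsilon)=\liminf_\varepsilon\mathrm{SGL}^\omega_\varepsilon(u_\varepsilon)\ge P_\Omega(u)$, or $T(\varepsilon)|u_\varepsilon|_{H^1}\ge\delta>0$ along the subsequence, whence $T(\varepsilon)=o(\sqrt\varepsilon)$ gives $\sqrt\varepsilon|u_\varepsilon|_{H^1}\ge\delta\sqrt\varepsilon/T(\varepsilon)\to\infty$ and then the second assertion — or, if $\|u_\varepsilon\|_{L^2}$ is unbounded, the estimate of the common potential term $\tfrac1{4\varepsilon}\int_{(0,1)^2}\mathcal{W}(u_\varepsilon)(x)\,\mathrm{d}x$ exactly as in the proof of Proposition \ref{prop:limInf} — forces $\mathrm{DSGL}^\omega_{\varepsilon,T}(u_\varepsilon)\to\infty$. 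Hence $\mathrm{DSGL}^\omega_{\varepsilon,T}$ and $\mathrm{SGL}^\omega_\varepsilon$ share the same $\Gamma$-limit $P_\Omega$.

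It remains to prove \eqref{eq:quadPlan}. After the substitution $a=2^{-j}$, $|u|_{\Bp}^2$ becomes — for each cone $\iota$, say $\iota=1$, the cone $\iota=-1$ being identical after interchanging the coordinates — $\ln2\int 2^{4j}|\langle u,\psi^{\omega,per}_{2^{-j},s,t,1}\rangle|^2\,\mathrm{d}j\,\mathrm{d}s\,\mathrm{d}t$ over $j\ge-\log_2\Gamma$, $|s|\le\Delta$, $t\in(0,1)^2$, and $|u|_{\DBp,c}^2$ is precisely the left-endpoint rectangle-rule discretisation of this integral on the lattice $j\in c\N_0-\log_2\Gamma$, $s\in2^{-j/2}(-\Delta+c\Z)$, $t\in cA_{\iota,2^{-j}}\Z^2$, the weight $c^4 2^{2j}$ matching the corresponding cell volume; here the periodicity of $u\in H^1_0((0,1)^2)$ makes the restrictions $t\in(0,1)^2$, $m\in[0,1]^2$ a single fundamental domain and the Lipschitz cut-offs $\omega(\iota,\cdot)$ make the endpoints $|s|=\Delta$ and the coarsest scale harmless. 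On each cell I would Taylor-expand about the node, via $\big||g(x)|^2-|g(x_0)|^2\big|\le 2|g(x_0)|\int_0^1|\nabla g|\,|x-x_0|\,\mathrm{d}\tau+\big(\int_0^1|\nabla g|\,|x-x_0|\,\mathrm{d}\tau\big)^2$, integrate over the cell, and sum over cells with the Cauchy--Schwarz inequality (once within each cell and once over the cells), obtaining
$$\big||u|_{\DBp,c}^2-|u|_{\Bp}^2\big|\lesssim c\,\|h\|_{\ell^2}\,\big\|\nabla_{(j,s,t)}h\big\|_{L^2}+c^2\,\big\|\nabla_{(j,s,t)}h\big\|_{L^2}^2,\qquad h(j,s,t):=2^{2j}\langle u,\psi^{\omega,per}_{2^{-j},s,t,1}\rangle,$$
where $\|h\|_{\ell^2}^2=|u|_{\DBp,c}^2\lesssim|u|_{H^1}^2+\|u\|_{L^2}^2$ by the (discrete) Bessel inequality of Appendix \ref{app:Bessel} and $\|h\|_{L^2}^2\sim|u|_{\Bp}^2$.

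The crux — and the main obstacle — is the bound $\big\|\nabla_{(j,s,t)}h\big\|_{L^2}\lesssim|u|_{H^1}+\|u\|_{L^2}$. Differentiating $\psi_{a,s,t}$ in $t$, $s$ or $a$ produces, by the chain rule, the shearlet at the same $(a,s,t)$ built from a differentiated generator — $\partial_i\psi$, $x\mapsto x_2\partial_1\psi$, or a fixed combination of $\psi$ and $y\cdot\nabla\psi$ — multiplied by an explicit power of $a$; and, since $\psi^{\omega}_{a,s,t,\iota}=\omega(\iota,s)\psi_{a,s,t}$ with $\omega$ Lipschitz, $\partial_s$ additionally contributes $\omega'(\iota,s)\psi_{a,s,t}$. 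Each generator occurring here still satisfies the admissibility and decay assumed for $\psi$ — for instance $\widehat{\partial_1\psi}(\xi)=2\pi i\xi_1\widehat\psi(\xi)$, so its admissibility constant equals $(2\pi)^2\|\psi\|_{L^2}^2<\infty$ — and this is exactly where the decay of $\widehat\psi$ required in the theorem (stronger than in Proposition \ref{prop:equivalence}) is used: to make $\psi$ differentiable with integrable derivatives, to keep these auxiliary generators admissible, and thus to dispose of the tails in $j$ (fine scales), in $|s|\to\Delta$, and in $|t|$ far from $(0,1)^2$, all treated as in the proof of Proposition \ref{prop:HeavisideConvergence}. Matching the $a$-powers above against the quadrature weight $2^{2j}$, the derivatives in $t$ and $s$ turn out to be dominated in $L^2$ by $\|u\|_{L^2}$, by $|u|_{H^1}$, or by $\|u\|_{L^2}^{1/2}|u|_{H^1}^{1/2}$ according to the power of $a$ they carry — via the homogeneous Bessel/frame bound (Theorem \ref{thm:HomFrameL2}) for the pure $L^2$-powers and a Cauchy--Schwarz interpolation between the $L^2$- and $\dot H^1$-characterisations for the intermediate ones — and the $j$-derivative by $|u|_{H^1}$ via \eqref{eq:embeddingH1R2}; crucially no norm stronger than $H^1$ appears. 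Substituting $\big\|\nabla_{(j,s,t)}h\big\|_{L^2}\lesssim|u|_{H^1}+\|u\|_{L^2}$ into the displayed inequality yields \eqref{eq:quadPlan}. The delicate point throughout is precisely this last step: aligning the anisotropic, scale-dependent discrete lattice with the continuous measure through $a=2^{-j}$, and tracking the powers of $a$ generated by the dilation and shearing carefully enough that the derivative quantities remain controlled by $|u|_{H^1}$ and $\|u\|_{L^2}$ rather than by a higher Sobolev norm.
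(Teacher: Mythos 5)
Your overall architecture matches the paper's: everything is reduced to a single quadrature estimate of the form $\bigl||u|_{\DBp,c}^2-|u|_{\Bp}^2\bigr|\lesssim c\,(|u|_{H^1}^2+\|u\|_{L^2}^2)$, combined with the blow-up/periodisation trick of Proposition \ref{prop:equivalence} to pass to $\R^2$, the lower frame bound to get the divergence statement, and Propositions \ref{prop:limInf} and \ref{prop:PieceSmoothConvergence} for the $\Gamma$-limit. (Your observation that the raw estimate only forces $|\mathrm{DSGL}-\mathrm{SGL}|\to0$ when $\varepsilon|u_\varepsilon|_{H^1}^2=\mathcal{O}(1)$, with the divergent case handled separately, is a fair reading; the paper's own bound $\mathcal{O}(c|h|_{H^1}^2)$ has the same limitation.) Where you genuinely diverge is in how the quadrature estimate is proved. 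The paper moves entirely to the Fourier side: via Plancherel and the computation of \cite[Section 5.1.1]{KGLConstrCmptShear2012} the translation sum over $m$ is converted into a frequency integral plus an error $W_2^\iota$ with $|W_2^\iota|\le\mathcal{R}(c)\|\partial h/\partial x_{i_\iota}\|_{L^2}^2$, $\mathcal{R}(c)\lesssim c$, and the remaining $(j,k)$-discretisation is compared with the continuous integral by bounding, uniformly in $\xi$, the quantities $U_1(\xi,c)$ and $U_2(\xi,c)$ built from the rounding operators and the Lipschitz continuity of $\omega$ and $\widehat\gamma$. You instead stay in the parameter domain $(j,s,t)$ and treat the discrete seminorm as a rectangle-rule discretisation of the continuous one, bounding the error by $c\,\|h\|_{\ell^2}\|\nabla_{(j,s,t)}h\|_{L^2}+c^2\|\nabla_{(j,s,t)}h\|_{L^2}^2$. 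This is a legitimate alternative and has the virtue of making the anisotropic lattice/weight bookkeeping transparent, but it forces you to redo by hand the translation-lattice analysis that the paper outsources to \cite{KGLConstrCmptShear2012}, and it makes the whole proof hinge on one unproven inequality.

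That inequality, $\|\nabla_{(j,s,t)}h\|_{L^2}\lesssim|u|_{H^1}+\|u\|_{L^2}$, is where your plan is not yet secured. The scaling heuristic is right: differentiating in $t_1$, $t_2$, $s$, $j$ produces coefficients against the modified generators $\partial_1\psi$, $\partial_2\psi$, $x_2\partial_1\psi$ and $x\cdot\nabla\psi$ multiplied by $a^{-1}$, $a^{-1/2}$, $a^{-1/2}$ and $\mathcal{O}(1)$ respectively, and the anisotropic cell sizes $c2^{-j}$, $c2^{-j/2}$, $c2^{-j/2}$, $c$ exactly absorb these powers. But to close the argument you must apply the upper frame/Bessel bounds of \eqref{eq:embeddingH1R2} and Appendix \ref{app:Bessel} to each modified generator, and the hypotheses of the theorem only control $|\widehat\psi|$. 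For $\partial_i\psi$ this is harmless ($\widehat{\partial_i\psi}=2\pi i\xi_i\widehat\psi$), but for $x_2\partial_1\psi$ and $x\cdot\nabla\psi$ the relevant Fourier transforms involve $\partial_{\xi_2}(\xi_1\widehat\psi)$ and $\nabla_\xi\widehat\psi$, whose decay does not follow from the stated bound on $|\widehat\psi|$; you would need to add (or extract from the compact support and admissibility assumptions) a decay hypothesis on the first derivatives of $\widehat\psi$. The paper's proof has an analogous implicit reliance on the Lipschitz continuity of $\widehat\gamma$, so this is a repairable omission rather than a fatal one, but as written the crux of your quadrature estimate is asserted rather than proved, and a complete write-up would also need to supply the discrete Bessel inequality you invoke, which Appendix \ref{app:Bessel} only establishes for the continuous transform.
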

\begin{proof}
In view of \eqref{eq:DiscrShearBasedFunctional}, it is sufficient to show that, if $(u_\varepsilon)_{\varepsilon>0}\subset H^1_0((0,1)^2)$ such that $|u_\varepsilon|_{H^1((0,1)^2)} = o(T(\varepsilon)^{-1})$, for $\varepsilon \to 0$, then
$$
\left| |u_\varepsilon|_{\Bp}^2 - |u_\varepsilon|_{\DBp, T(\varepsilon)}^2 \right| \to 0 \quad \text{ for } \varepsilon \to 0,
$$
and if $\liminf_{\varepsilon>0} \varepsilon |u_\varepsilon|_{H^1((0,1)^2)}^2  = \infty$, then $\liminf_{\varepsilon>0} \varepsilon |u_\varepsilon|_{\DBp, T(\varepsilon)}^2 = \infty$.

We observe that there exists an $R \coloneqq R(\mathfrak{a}, \mathfrak{s})\in \N$ such that $\suppp \psi_{a,s,t,\iota} \subset B_{R}(t)$ for all $(a,s,t,\iota) \in (0,\mathfrak{a}] \times [-\mathfrak{s}, \mathfrak{s}] \times \R^2 \times \{-1,1\}$.

By similar arguments to those following equation \eqref{eq:upperbound}, we observe that, for all $f \in H^1_0((0,1)^2)$,
\begin{align}
|f|_{\Bp}^2 = & \ \frac{1}{N^2} \sum_{\iota = -1,1} \int_{0}^{\mathfrak{a}} \int_{-\mathfrak{s}}^{ \mathfrak{s}} a^{-2} \int_{[0,N]^2} \left|\left\langle f_{[-R,N +  R]}, \psi_{a,s,t, \iota}^{\omega}\right\rangle_{L^2(\R^2)}\right|^2 a^{-3} \, \,\mathrm{d}t \, \mathrm{d}s \, \mathrm{d}a \nonumber \\
 = & \ \frac{1}{N^2} \sum_{\iota = -1,1} \int_{0}^{\mathfrak{a}} \int_{-\mathfrak{s}}^{ \mathfrak{s}} a^{-2} \int_{\R^2} \left|\left\langle f_{[-R,N + R]}, \psi_{a,s,t, \iota}^{\omega}\right\rangle_{L^2(\R^2)}\right|^2 a^{-3} \, \,\mathrm{d}t \, \mathrm{d}s \, \mathrm{d}a \nonumber \\
 & \qquad - \frac{1}{N^2} \sum_{\iota = -1,1} \int_{0}^{\mathfrak{a}} \int_{-\mathfrak{s}}^{ \mathfrak{s}} a^{-2} \int_{\R^2\setminus [0,N]^2} \left|\left\langle f_{[-R,N + R]}, \psi_{a,s,t, \iota}^{\omega}\right\rangle_{L^2(\R^2)}\right|^2 a^{-3} \, \,\mathrm{d}t \, \mathrm{d}s \, \mathrm{d}a. \label{eq:SimplificationToRealsContinuousCase}
\end{align}
We further compute that
\begin{align*}
& \ \frac{1}{N^2} \sum_{\iota = -1,1} \int_{0}^{\mathfrak{a}} \int_{-\mathfrak{s}}^{ \mathfrak{s}} a^{-2} \int_{\R^2\setminus [0,N]^2} \left|\left\langle f_{[-R,N + R]}, \psi_{a,s,t, \iota}^{\omega}\right\rangle_{L^2(\R^2)}\right|^2 a^{-3} \, \,\mathrm{d}t \, \mathrm{d}s \, \mathrm{d}a\\
&\quad  = \ \frac{1}{N^2} \sum_{\iota = -1,1} \int_{0}^{\mathfrak{a}} \int_{-\mathfrak{s}}^{ \mathfrak{s}} a^{-2} \int_{\R^2\setminus [0,N]^2} \left|\left\langle f_{[-R,N + R]} - f_{[R,N - R]}, \psi_{a,s,t, \iota}^{\omega}\right\rangle_{L^2(\R^2)}\right|^2 a^{-3} \, \,\mathrm{d}t \, \mathrm{d}s \, \mathrm{d}a\\
 &\quad \leq \ \frac{1}{N^2} \sum_{\iota = -1,1} \int_{0}^{\mathfrak{a}} \int_{-\mathfrak{s}}^{ \mathfrak{s}} a^{-2} \int_{\R^2} \left|\left\langle f_{[-R,N + R]} - f_{[R,N - R]}, \psi_{a,s,t, \iota}^{\omega}\right\rangle_{L^2(\R^2)}\right|^2 a^{-3} \, \,\mathrm{d}t \, \mathrm{d}s \, \mathrm{d}a\\
 &\quad \lesssim  \  \frac{1}{N^2} \mathcal{O}\left( |f_{[-R,N + R]} - f_{[R,N - R]}|_{H^1(\R^2)}^2\right) \lesssim \ \frac{1}{N} \mathcal{O}\left( |f|_{H^1((0,1)^2)}^2\right),
\end{align*}
where the last inequality follows by equation \eqref{eq:R2EquivalenceShearlets} and equation \eqref{eq:TheEstimateOnTheDifferenceOfTwoPeriodicExtensions}.
Denoting $c_\varepsilon \coloneqq T(\varepsilon)$, we compute similarly to the continuous case
\begin{align}
 |f|_{\DBp, c_\varepsilon}^2 = & \ c_\varepsilon^4\sum_{\iota = -1,1} \sum_{j \in J_{c_\varepsilon}} \sum_{k \in K_{{c_\varepsilon},j}} \sum_{\substack{m \in {c_\varepsilon} A_{\iota, 2^{-j}}\Z^2,\\ m \in [0,N]^2}} 2^{2j} \left|\left\langle f, \psi_{2^{-j},2^{-j/2} k, m, \iota}^{\omega, per} \right\rangle_{L^2(\R^2)} \right|^2\nonumber \\
  = & \ \frac{c_\varepsilon^4}{N^2}\sum_{\iota = -1,1} \sum_{j \in J_{c_\varepsilon}} \sum_{k \in K_{{c_\varepsilon},j}} \sum_{m \in {c_\varepsilon} A_{\iota, 2^{-j}} \Z^2} 2^{2j} \left|\left\langle f_{[-R,N + R]}, \psi_{2^{-j},2^{-j/2} k,m, \iota}^{\omega} \right \rangle_{L^2(\R^2)} \right|^2\nonumber \\
  &\qquad -
  \frac{c_\varepsilon^4}{N^2} \sum_{\iota = -1,1} \sum_{j \in J_{c_\varepsilon}} \sum_{k \in K_{{c_\varepsilon},j}} \sum_{\substack{m \in {c_\varepsilon} A_{\iota, 2^{-j}} \Z^2 \\ m \not\in [0,N]^2}} 2^{2j} \left|\left\langle f_{[-R,N + R]}, \psi_{2^{-j},2^{-j/2} k, m, \iota}^{\omega} \right \rangle_{L^2(\R^2)} \right|^2. \label{eq:SimplificationToRealsDiscreteCase}
\end{align}
By assumption, we have that, for $j \in J_{c_\varepsilon}$, $k \in K_{{c_\varepsilon},j}$, and $m \in {c_\varepsilon} A_{\iota, 2^{-j}}$, we have that $\suppp \psi_{2^{-j},2^{-j/2} k, m, \iota}^{\omega} \subset B_{R}(m)$. Thus we have that
\begin{align*}
&\ \frac{c_\varepsilon^4}{N^2} \sum_{\iota = -1,1} \sum_{j \in J_{c_\varepsilon}} \sum_{k \in K_{{c_\varepsilon},j}} \sum_{\substack{m \in {c_\varepsilon} A_{\iota, 2^{-j}}\Z^2,\\ m \not\in [0,N]^2}} 2^{2j} \left|\left\langle f_{[-R,N + R]}, \psi_{2^{-j},2^{-j/2} k, m, \iota}^{\omega} \right \rangle_{L^2(\R^2)} \right|^2 \\
&\quad =  \ \frac{c_\varepsilon^4}{N^2} \sum_{\iota = -1,1} \sum_{j \in J_{c_\varepsilon}} \sum_{k \in K_{{c_\varepsilon},j}} \sum_{\substack{m \in {c_\varepsilon} A_{\iota, 2^{-j}}\Z^2,\\ m \not\in [0,N]^2}} 2^{2j} \left|\left\langle f_{[-R,N + R]} - f_{[R,N - R]}, \psi_{2^{-j},2^{-j/2} k, m, \iota}^{\omega} \right \rangle_{L^2(\R^2)} \right|^2\\
&\quad  \leq \ \frac{c_\varepsilon^4}{N^2} \sum_{\iota = -1,1} \sum_{j \in J_{c_\varepsilon}} \sum_{k \in K_{{c_\varepsilon},j}} \sum_{m \in {c_\varepsilon} A_{\iota, 2^{-j}}\Z^2} 2^{2j} \left|\left\langle f_{[-R,N + R]} - f_{[R,N - R]}, \psi_{2^{-j},2^{-j/2} k, m, \iota}^{\omega} \right \rangle_{L^2(\R^2)} \right|^2 \eqqcolon \mathrm{I}.
\end{align*}
We define $i_\iota \coloneqq 1$ if $\iota = 1$ and $i_\iota \coloneqq 2$ if $\iota = -1$ and $\gamma' \coloneqq \psi$ and obtain by partial integration that
\begin{align*}
\mathrm{I} = & \ \frac{c_\varepsilon^4}{N^2} \sum_{\iota = -1,1} \sum_{j \in J_{c_\varepsilon}} \sum_{k \in K_{{c_\varepsilon},j}} \sum_{m \in {c_\varepsilon} A_{\iota, 2^{-j}}\Z^2}\left|\left\langle \frac{\partial }{\partial x_{i_\iota}}\left(f_{[-R,N + R]} - f_{[R,N - R]}\right), \gamma_{2^{-j},2^{-j/2} k, S_{2^{-j/2}k} A_{2^{-j}} m, \iota}^{\omega} \right \rangle_{L^2(\R^2)} \right|^2\\
= & \ \frac{1}{N^2}\mathcal{O}\left(|f_{[-R,N + R]} - f_{[R,N - R]}|_{H^1(\R^2)}^2\right) = \frac{1}{N}\mathcal{O}\left(|f|_{H^1((0,1)^2)}^2\right),
\end{align*}
where the second to last step follows from a long but simple computation which, with minor changes,
is performed in \cite[Section 5.1.1]{KGLConstrCmptShear2012} and the last inequality follows with \eqref{eq:TheEstimateOnTheDifferenceOfTwoPeriodicExtensions}.
Let now $(u_\varepsilon)_{\varepsilon>0} \subset H^1_0((0,1)^2)$. We then define
$$
N(u_\varepsilon ) \coloneqq \left\lceil \max\left\{ |u_\varepsilon|_{H^1((0,1)^2)}^3 , \frac{1}{\varepsilon}\right\}\right\rceil.
$$
It is not hard to see that $| (u_\varepsilon)_{[-R, N(u_\varepsilon )+R]} / N(u_\varepsilon )|_{H^1(\R^2)} \sim |u_\varepsilon|_{H^1((0,1)^2)}$ and by \eqref{eq:SimplificationToRealsContinuousCase} and \eqref{eq:SimplificationToRealsDiscreteCase} we have that, for $\varepsilon \to 0$,
\begin{align*}
\left||u_\varepsilon|_{\Bp}^2 - \ \sum_{\iota = -1,1} \int_{0}^{\mathfrak{a}} \int_{-\mathfrak{s}}^{ \mathfrak{s}} a^{-2} \int_{\R^2} \left|\left\langle \frac{(u_\varepsilon)_{[-R,N + R]}}{N(u_\varepsilon)^2} , \psi_{a,s,t, \iota}^{\omega}\right\rangle_{L^2(\R^2)}\right|^2 a^{-3} \, \,\mathrm{d}t \, \mathrm{d}s \, \mathrm{d}a\right| \to 0
\end{align*}
and
\begin{align*}
\left||u_\varepsilon|_{\DBp, {c_\varepsilon}}^2 - \ c_\varepsilon^4\sum_{\iota = -1,1} \sum_{j \in J_{c_\varepsilon}} \sum_{k \in K_{{c_\varepsilon},j}} \sum_{m \in {c_\varepsilon} \Z^2} 2^{2j} \left|\left\langle \frac{(u_\varepsilon)_{[-R,N + R]}}{N(u_\varepsilon)} , \psi_{2^{-j},2^{-j/2} k,  m, \iota}^{\omega} \right \rangle_{L^2(\R^2)} \right|^2 \right|\to 0.
\end{align*}
Thus, the result follows if for all $(h_\varepsilon)_{\varepsilon>0} \subset H^1(\R^2)$ with $|h_\varepsilon|_{H^1(\R^2)} = \mathcal{O}(T(\varepsilon)^{-1})$, for $\varepsilon \to 0$, one has that
\begin{align}
& \ \left|\int_{\R^2}\int_{-\mathfrak{s}}^{\mathfrak{s}} \int_{0}^{\mathfrak{a}} a^{-2} |\langle h_\varepsilon, \psi_{a,s,t, \iota}^{\omega} \rangle_{L^2(\R^2)} |^2 a^{-3} \, \mathrm{d}a \, \mathrm{d}s \, \mathrm{d}t\right. \nonumber\\
&\qquad - \left.c_\varepsilon^4\sum_{\iota = -1,1} \sum_{j \in J_{c_\varepsilon}} \sum_{k \in K_{{c_\varepsilon},j}} \sum_{m \in {c_\varepsilon} A_{\iota, 2^{-j}}\Z^2} 2^{2j} \left|\left\langle h_\varepsilon , \psi_{2^{-j},2^{-j/2} k, m, \iota}^{\omega} \right \rangle_{L^2(\R^2)} \right|^2\right| \to 0 \quad \text{ for } \varepsilon \to 0, \label{eq:AuxStatement1}
\end{align}
and for all $(h_\varepsilon)_{\varepsilon>0} \subset H^1(\R^2)$ with $\sup_{\varepsilon>0}\|u_\varepsilon\|_{L^2(\R^2)}<\infty$ and $\liminf_{\varepsilon>0}\varepsilon|h_\varepsilon|_{H^1(\R^2)}^2  = \infty$:
\begin{align}
\liminf_{\varepsilon>0} \varepsilon \cdot c_\varepsilon^4 \sum_{\iota = -1,1} \sum_{j \in J_{c_\varepsilon}} \sum_{k \in K_{{c_\varepsilon},j}} \sum_{m \in {c_\varepsilon} A_{\iota, 2^{-j}}\Z^2} 2^{2j} \left|\left\langle h_\varepsilon , \psi_{2^{-j},2^{-j/2} k,  m, \iota}^{\omega} \right \rangle_{L^2(\R^2)} \right|^2  = \infty.
\label{eq:AuxStatement2}
\end{align}
We will now verify the statements \eqref{eq:AuxStatement1} and \eqref{eq:AuxStatement2}. Let $h \in H^1(\R^2)$; then, by partial integration, we compute that
\begin{align}
&\sum_{\iota = -1,1} \int_{\R^2}\int_{-\mathfrak{s}}^{\mathfrak{s}} \int_{0}^{\mathfrak{a}} a^{-2} |\langle h, \psi_{a,s,t, \iota}^{\omega} \rangle_{L^2(\R^2)} |^2 a^{-3} \, \mathrm{d}a \, \mathrm{d}s \, \mathrm{d}t\nonumber \\
&= \sum_{\iota = -1,1} \int_{\R^2}\int_{-\mathfrak{s}}^{\mathfrak{s}} \int_{0}^{\mathfrak{a}} \left|\left\langle \frac{\partial}{ \partial x_{i_\iota}} h, \gamma_{a,s,t, \iota}^{\omega} \right \rangle_{L^2(\R^2)} \right|^2 a^{-3} \, \mathrm{d}a \, \mathrm{d}s \, \mathrm{d}t. \label{eq:thisShouldBereWritten}
\end{align}
By the co-area formula, we can rewrite \eqref{eq:thisShouldBereWritten} as
\begin{align*}
&\sum_{\iota = -1,1} \int_{\R^2}\int_{-\mathfrak{s}}^{\mathfrak{s}} \int_{0}^{\mathfrak{a}} \left|\left\langle \frac{\partial}{ \partial x_{i_\iota}} h, \gamma_{a,s,t, \iota}^{\omega} \right \rangle_{L^2(\R^2)} \right|^2 a^{-3} \, \mathrm{d}a \, \mathrm{d}s \, \mathrm{d}t \nonumber\\
& = \sum_{\iota = -1,1} \int_{-\log_2(\mathfrak{a})}^{\infty}\int_{-2^{j/2}\mathfrak{s}}^{2^{j/2}\mathfrak{s}} \int_{\R^2} \left|\left\langle \frac{\partial}{ \partial x_{i_\iota}} h, \gamma_{2^{-j},2^{-j/2}k, m, \iota}^{\omega} \right \rangle_{L^2(\R^2)} \right|^2 2^{3 j /2} \, \mathrm{d}m \, \mathrm{d}k \, \mathrm{d}j \nonumber\\
& = \sum_{\iota = -1,1} \int_{-\log_2(\mathfrak{a})}^{\infty}\int_{-2^{j/2}\mathfrak{s}}^{2^{j/2}\mathfrak{s}} \int_{\R^2} \left|\left\langle  \mathcal{F}\left(\frac{\partial}{ \partial x_{i_\iota}}h\right), \mathcal{F}\left(\gamma_{2^{-j},2^{-j/2}k, m, \iota}^{\omega}\right) \right \rangle_{L^2(\R^2)} \right|^2 2^{3 j /2} \, \mathrm{d}m \, \mathrm{d}k \, \mathrm{d}j \nonumber\\
& = \sum_{\iota = -1,1} \int_{-\log_2(\mathfrak{a})}^{\infty}\int_{-2^{j/2}\mathfrak{s}}^{2^{j/2}\mathfrak{s}} \int_{\R^2}  \omega(2^{-j/2} k, \iota)^2 \left|\mathcal{F} \left(\frac{\partial}{ \partial x_{i_\iota}}h\right)(\xi)\right|^2 \left|\mathcal{F}\left(\widehat{\gamma}\left(A_{2^{-j}} S^{T}_{-2^{-j/2} k} \xi \right)  \right) \right|^2 \mathrm{d}\xi \, \mathrm{d}k\, \,\mathrm{d}j  \nonumber\\
& = \sum_{\iota = -1,1} \int_{\R^2} \left|\mathcal{F} \left(\frac{\partial}{ \partial x_{i_\iota}}h\right)(\xi)\right|^2 \int_{-\log_2(\mathfrak{a})}^{\infty}\int_{-2^{j/2}\mathfrak{s}}^{2^{j/2}\mathfrak{s}} \omega(2^{-j/2} k, \iota)^2 \left|\mathcal{F}\left(\widehat{\gamma}\left(A_{2^{-j}} S^{T}_{-2^{-j/2} k} \xi \right)  \right) \right|^2 \mathrm{d}k \, \mathrm{d}j\, \,\mathrm{d}\xi \nonumber\\
& \eqqcolon  \ S(h).
\end{align*}
We shall now perform a similar computation for the discrete shearlet-based Besov seminorm. We have by Plancherel's identity that, for $c>0$,
\begin{align}
    &\ {c}^4 \sum_{j \in {c} \Z} \sum_{ k \in K_{{c},j} } \sum_{m \in {c_\varepsilon} A_{\iota, 2^{-j}}\Z^2} 2^{2j} \left|\left\langle h, \psi_{2^{-j},2^{-j/2} k, m, \iota}^{\omega, per} \right\rangle_{L^2(\R^2)} \right|^2\nonumber \\
    & =  \ {c}^4 \sum_{j \in {c} \Z} \sum_{ k \in K_{{c},j} } \sum_{m \in {c_\varepsilon} A_{\iota, 2^{-j}}\Z^2}  \left|\left\langle \mathcal{F}\left(\frac{\partial}{\partial x_{i_\iota}} h\right), \mathcal{F}\left(\gamma_{2^{-j},2^{-j/2} k, m, \iota}^{\omega} \right) \right\rangle_{L^2(\R^2)} \right|^2. \label{eq:NowApplyTheVeryLongComputationFromGitta}
\end{align}
Following once again the computation from \cite[Section 5.1.1]{KGLConstrCmptShear2012} shows that \eqref{eq:NowApplyTheVeryLongComputationFromGitta} equals:
\begin{align*}
    &\ W_1^\iota(h, c) + W_2^\iota(h, c) \\
    &  \coloneqq \ {c}^2 \sum_{j \in {c} \Z} \sum_{ k \in K_{{c},j} } \int_{\R^2} \omega(2^{-j/2} k, \iota)^2 \left| \mathcal{F}\left(\frac{\partial}{\partial x_{i_\iota}} h\right)\right|^2 \left| \widehat{\gamma}\left(A_{2^{-j}} S_{-2^{j/2}k}^T \xi\right )\right|^2 \,\mathrm{d}\xi+ W_2^\iota\left(\frac{\partial}{\partial x_{i_\iota}} h, c \right),
\end{align*}
where for a function $\mathcal{R}:(0,1] \to (0,1]$:
\begin{align}\label{eq:EstimateWithR}
     \left| W_2^\iota\left(\frac{\partial}{\partial x_{i_\iota}} h, c \right)\right| \leq \mathcal{R}({c}) \left\| \frac{\partial}{\partial x_{i_\iota}} h \, \right\|_{L^2(\R^2)}^2 ,
\end{align}
and, for sufficiently small ${c>0}$,
$$
W_1^\iota(h, c) \geq \left\| |\xi_{{i_\iota}}| \cdot \hat{h}  \cdot \chi_{\mathcal{C}^\iota} \right\|_{L^2(\R^2)}^2.
$$
Here $\mathcal{C}^1\coloneqq \{x \colon |x_1|, |x_2| \geq 1, |x_1| \geq |x_2|\}$, and $\mathcal{C}^{-1}\coloneqq \{x \colon |x_1|, |x_2| \geq 1, |x_1| \leq |x_2|\}$. Hence, we conclude that
$$
W_1^\iota(h, c) + W_2^\iota(h, c) \gtrsim \left \| |\xi| \hat{h}_{|\mathcal{C}^1 \cup \mathcal{C}^{-1}}\right \|_{L^2(\R^2)}^2 - \mathcal{R}(c) |h|_{H^1(\R^2)}^2.
$$
We have that
$$
\left\| |\xi| \hat{h}_{|\mathcal{C}^1 \cup \mathcal{C}^{-1}}\right\|_{{L^2(\R^2)}}^2 \gtrsim |h|_{H^1(\R^2)}^2 - \|h \|_{{L^2(\R^2)}}^2.
$$
If we now insert $u_\varepsilon$ for $h$ we deduce that
\begin{align} \label{eq:ThisImpliesTheAuxStatement2}
\varepsilon \big( W_1^\iota(u_\varepsilon, c_\varepsilon) + W_2^\iota(u_\varepsilon, c_\varepsilon)\big) \gtrsim \varepsilon (1- \mathcal{R}(c_\varepsilon)) |u_\varepsilon|_{H^1(\R^2)}^2 - \varepsilon \|u_\varepsilon\|_{L^2(\R^2)}^2.
\end{align}
Applying \cite[Proposition 3.3]{KGLConstrCmptShear2012} to \cite[Equation (38)]{KGLConstrCmptShear2012} yields that $\mathcal{R}({c_\varepsilon}) \lesssim {c_\varepsilon}$ and thus
\eqref{eq:ThisImpliesTheAuxStatement2} implies \eqref{eq:AuxStatement2}.
To obtain \eqref{eq:AuxStatement1}, we now estimate with \eqref{eq:EstimateWithR}:
\begin{align} \label{eq:ARandomSplittingThatMightBeImportantLater}
\left|S(h) - \sum_{\iota = -1,1} \big(W_1^\iota(h, c) + W_2^\iota(h, c)\big)\right| \leq \left|S(h) - W_1^1(h, c) + W_1^{-1}(h, c)\right| + \mathcal{O}\left(\mathcal{R}(c) |h|_{H^1(\R^2)}^2\right).
\end{align}
We rewrite $W_1^\iota$ as a triple integral by using the rounding operators \eqref{eq:roundingOp1} and \eqref{eq:roundingOp2}:
\begin{align*}
    W_1^\iota(h, c) = &\ \int_{-\log_2(\mathfrak{a})}^\infty \int_{-2^{[j]_c/2} \mathfrak{s}}^{[2^{[j]_{c}/2} \mathfrak{s}]_{j,{c}} + {c}} \int_{\R^2} \omega\left(2^{-[j]_{c}/2} [k]_{j,{c}}, \iota\right)^2 \nonumber \\
    & \qquad \cdot \left| \mathcal{F}\left(\frac{\partial}{\partial x_{i_\iota}} h\right)\right|^2 \left| \widehat{\gamma}\left(A_{2^{-[j]_c}} S_{-2^{[j]_{c}/ 2} [k]_{j,{c}} }^T \xi\right )\right|^2 \,\mathrm{d}\xi \, \mathrm{d}k \, \mathrm{d}j. 
\end{align*}
We proceed by estimating $|S(h) - W_1^{1}(h, c) - W_1^{-1}(h, c)|$:
\begin{align}
    & |S(h) - W_1^{1}(h, c) - W_1^{-1}(h, c)|\nonumber
    \\
    & \leq \ \int_{\R^2} \left| \mathcal{F}\left(\frac{\partial}{\partial x_{i_\iota}} h\right)\right|^2 \int_{-\log_2(\mathfrak{a})}^\infty \int_{-2^{j/2} \mathfrak{s}}^{2^{j/2} \mathfrak{s}}  \left| \omega\left(2^{-[j]_c/2} [k]_{j,{c}}, \iota\right)^2 \left| \widehat{\gamma}\left(A_{2^{-[j]_{c}}} S_{-2^{[j]_{c}/ 2} [k]_{j,{c}} }^T \xi\right )\right|^2 \right. \nonumber\\
    & \quad  - \left. \omega\left(2^{-j/2} k, \iota\right)^2 \left| \widehat{\gamma}\left(A_{2^{-j}} S_{-2^{j/ 2} k }^T \xi\right )\right|^2\right| \, \mathrm{d}k \, \mathrm{d}j \, \mathrm{d}\xi \nonumber\\
    & \qquad +  \int_{\R^2} \left| \mathcal{F}\left(\frac{\partial}{\partial x_{i_\iota}} h\right)\right|^2 \int_{-\log_2(\mathfrak{a})}^\infty \int_{2^{j/2} \mathfrak{s}}^{[2^{[j]_{c}/2} \mathfrak{s}]_{j,{c}} + {c}} \omega\left(2^{-[j]_{c}/2} [k]_{j,{c}}, \iota\right)^2 \left| \widehat{\gamma}\left(A_{2^{-[j]_{c}}} S_{-2^{[j]_{c}/ 2} [k]_{j,{c}} }^T \xi\right )\right|^2 \, \mathrm{d}k \, \mathrm{d}j \, \mathrm{d}\xi \nonumber\\
    & \leq  \ |h|_{H^1(\R^2)}^2 \sup_{\xi \in \R^2} \left( \int_{-\log_2(\mathfrak{a})}^\infty \int_{-2^{j/2} \mathfrak{s}}^{2^{j/2} \mathfrak{s}}  \left| \omega\left(2^{-[j]_{c}/2} [k]_{j,{c}}, \iota\right)^2 \left| \widehat{\gamma}\left(A_{2^{-[j]_{c}}} S_{-2^{[j]_{c}/ 2} [k]_{j,{c}} }^T \xi\right )\right|^2 \right. \right. \nonumber \\
    & \quad - \left. \omega(2^{-j/2} k, \iota)^2 \left| \widehat{\gamma}\left(A_{2^{-j}} S_{-2^{j/ 2} k }^T \xi\right )\right|^2\right| \, \mathrm{d}k \, \mathrm{d}j \nonumber\\
    & \qquad + \left. \int_{-\log_2(\mathfrak{a})}^\infty \int_{2^{j/2} \mathfrak{s}}^{[2^{[j]_{c}/2} \mathfrak{s}]_{j,{c}} + {c}} \omega\left(2^{-[j]_{c}/2} [k]_{j,{c}}, \iota\right)^2 \left| \widehat{\gamma}\left(A_{2^{-[j]_{c}}} S_{-2^{[j]_{c}/ 2} [k]_{j,{c}} }^T \xi\right )\right|^2  \, \mathrm{d}k \, \mathrm{d}j \right) \nonumber\\
   &  \eqqcolon \ |h|_{H^1(\R^2)}^2 \text{ ess\!}\sup_{\xi \in \R^2} \big( U_1(\xi, c) + U_2(\xi, c)\big), \label{eq:WeNeedToPlugOurEstimatesHere}
\end{align}
where
\begin{align*}
    U_1(\xi, c) \coloneqq&\ \int_{-\log_2(\mathfrak{a})}^\infty \int_{-2^{j/2} \mathfrak{s}}^{2^{j/2} \mathfrak{s}}\left| \omega\left(2^{-[j]_{c}/2} [k]_{j,{c}}, \iota\right)^2 \left| \widehat{\gamma}\left(A_{2^{-[j]_{c}}} S_{-2^{[j]_{c}/ 2} [k]_{j,{c}} }^T \xi\right )\right|^2 \right.\\
    & \left.\qquad - \omega\left(2^{-j/2} k, \iota\right)^2 \left| \widehat{\gamma}\left(A_{2^{-j}} S_{-2^{j/ 2} k }^T \xi\right )\right|^2\right| \, \mathrm{d}k \, \mathrm{d}j \quad \text{ and }\\
    U_2(\xi, c) \coloneqq&\ \int_{-\log_2(\mathfrak{a})}^\infty \int_{2^{j/2} \mathfrak{s}}^{[2^{[j]_{c}/2} \mathfrak{s}]_{j,{c}} + {c}} \omega\left(2^{-[j]_{c}/2} [k]_{j,{c}}, \iota\right)^2 \left| \widehat{\gamma}\left(A_{2^{-[j]_{c}}} S_{-2^{[j]_{c}/ 2} [k]_{j,{c}} }^T \xi\right )\right|^2 \, \mathrm{d}k \, \mathrm{d}j.
\end{align*}
We estimate $U_1(\xi, c)$ and $U_2(\xi, c)$ individually and start with $U_2(\xi, c)$. We have that, for all $\xi \in \R^2$,
\begin{align}\label{eq:DecayEstimate}
|\widehat{\gamma}(\xi)| \lesssim \frac{\min\{|\xi_1|^M, 1\}}{(1+|\xi_1|^2)^\frac{L}{2} (1+|\xi_2|^2)^\frac{L}{2}}.
\end{align}
Hence,
$$
|U_2(\xi, c)| \lesssim \int_{-\log_2(\mathfrak{a})}^\infty \int_{2^{j/2} \mathfrak{s}}^{[2^{[j]_{c}/2} \mathfrak{s}]_{j,{c}} + {c}}  \frac{\min\{|2^{-j} \xi_1|^{2M}, 1\}}{(1+|2^{-j}\xi_1|^2)^L (1+|2^{-j/2}(\xi_2 - 2^{-[j]_{c}/ 2} [k]_{j,{c}} \xi_1|^2))^L}  \, \mathrm{d}k \, \mathrm{d}j.
$$
Since $[k]_{j,{c}} = {[2^{[j]_{c}/2} \mathfrak{s}]_{j,{c}}}$ on the domain of integration we have that, if $\xi_1 \neq 0$, then
\begin{align}
|U_2(\xi, c)| \lesssim & \ {c}  \int_{-\log_2(\mathfrak{a})}^\infty   \frac{\min\{|2^{-j} \xi_1|^{2M}, 1\}}{(1+|2^{-j}\xi_1|^2)^L (1+|2^{-j/2}(\xi_2 - 2^{-[j]_{c}/ 2}{[2^{[j]_{c}/2} \mathfrak{s}]_{j,{c}}} \xi_1|^2))^L}   \, \mathrm{d}j \nonumber\\
\leq &\ {c} \int_{-\log_2(\mathfrak{a})}^\infty  \frac{\min\{|2^{-j} \xi_1|^{2M}, 1\}}{(1+|2^{-j}\xi_1|^2)^L} \,\mathrm{d}j \nonumber \\
\leq & \ {c} \int_{-\infty}^\infty  \frac{\min\{|2^{-j + \log_2(|\xi_1|)}|^{2M}, 1\}}{(1+|2^{-j + \log_2(|\xi_1|)}|^2)^L} \, \mathrm{d}j = \ {c} \int_{-\infty}^\infty  \frac{\min\{|2^{-j}|^{2M}, 1\}}{(1+|2^{-j}|^2)^L} \,\mathrm{d}j
\lesssim {c}. \label{eq:EstimateU2}
\end{align}
We continue by estimating $U_1(\xi, c)$. We compute with the binomial formula that, for all $\xi \in \R^2$,
\begin{align}
  &U_1(\xi, c) \nonumber \\
  & = \ \int_{-\log_2(\mathfrak{a})}^\infty \int_{-2^{j/2} \mathfrak{s}}^{2^{j/2} \mathfrak{s}} \left| \omega\left(2^{-[j]_c/2} [k]_{j,{c}}, \iota\right)^2 \left| \widehat{\gamma}\left(A_{2^{-[j]_{c}}} S_{-2^{[j]_{c}/ 2} [k]_{j,{c}} }^T \xi\right )\right|^2 \right. \nonumber\\
    & \qquad - \left. \omega(2^{-j/2} k, \iota)^2 \left| \widehat{\gamma}\left(A_{2^{-j}} S_{-2^{j/ 2} k }^T \xi\right )\right|^2 \right| \, \mathrm{d}k \, \mathrm{d}j \nonumber\\
     & =  \ \int_{-\log_2(\mathfrak{a})}^\infty \int_{-2^{j/2} \mathfrak{s}}^{2^{j/2} \mathfrak{s}}  \left| \omega\left(2^{-[j]_{c}/2} [k]_{j,{c}}, \iota\right)\widehat{\gamma}\left(A_{2^{-[j]_{c}}} S_{-2^{[j]_{c}/ 2} [k]_{j,{c}} }^T \xi\right )
     - \  \omega(2^{-j/2} k, \iota) \widehat{\gamma}\left(A_{2^{-j}} S_{-2^{j/ 2} k }^T \xi\right )\right|\nonumber\\
     & \qquad  \cdot \left| \omega\left(2^{-[j]_{c}/2} [k]_{j,{c}}, \iota\right) \widehat{\gamma}\left(A_{2^{-[j]_{c}}} S_{-2^{[j]_{c}/ 2} [k]_{j,{c}} }^T \xi\right )
     + \  \omega(2^{-j/2} k, \iota) \widehat{\gamma}\left(A_{2^{-j}} S_{-2^{j/ 2} k }^T \xi\right )\right| \, \mathrm{d}k \, \mathrm{d}j \label{eq:Step1TowardsU1Estimate}.
\end{align}
Using the decay estimate \eqref{eq:DecayEstimate}, we obtain that if $\iota = 1$, then, for all $\xi \in \R^2$,
\begin{align}
& \left| \omega\left(2^{-[j]_{c}/2} [k]_{j,{c}}, \iota\right) \widehat{\gamma}\left(A_{2^{-[j]_{c}}} S_{-2^{[j]_{c}/ 2} [k]_{j,{c}} }^T \xi\right )
     + \  \omega(2^{-j/2} k, \iota) \widehat{\gamma}\left(A_{2^{-j}} S_{-2^{j/ 2} k }^T \xi\right )\right|\nonumber \\
     &\quad \lesssim  \ \frac{\min\{|2^{-j} \xi_1|, 1\}^M}{(1+|2^{-j} \xi_1|^2)^\frac{L}{2} (1+|2^{-j/2} (\xi_2 + 2^{-j/2} k \xi_1)|^2)^\frac{L}{2}}. \label{eq:Step2TowardsU1Estimate}
\end{align}
If $\iota = -1$, then the roles of $\xi_1$ and $\xi_2$ are reversed.
Additionally, by the Lipschitz continuity of $\omega$, $\widehat{\gamma}$ and the Lipschitz continuity of $x \mapsto 2^{x}$ and $x \mapsto 2^{x/2}$ for $x$ in a neighbourhood of $0$, we conclude that, for all $\xi \in \R^2$,
\begin{align}
 &\left| \omega\left(2^{-[j]_{c}/2} [k]_{j,{c}}, \iota\right)\widehat{\gamma}\left(A_{2^{-[j]_{c}}} S_{-2^{[j]_{c}/ 2} [k]_{j,{c}} }^T \xi\right )
     - \  \omega(2^{-j/2} k, \iota) \widehat{\gamma}\left(A_{2^{-j}} S_{-2^{j/ 2} k }^T \xi\right )\right| \nonumber \\
     &\lesssim \ \left|A_{2^{-j}} S_{-2^{j/ 2} k }^T \xi - A_{2^{-[j]_{c}}} S_{-2^{[j]_{c}/ 2} [k]_{j,{c}} }^T \xi\right|  \nonumber\\
     & \leq \ \left| \left(2^{-j}-2^{-[j]_{c}}\right) \xi_1, \left(2^{-j/2}-2^{-[j]_{c}/2}\right) \xi_2 - \left(2^{-j}k -2^{-[j]_{c}} [k]_{j,{c}}  \right)\xi_1\right|  \nonumber\\
      & \leq  \ \left| \left({c} 2^{-j} \xi_1, \left(2^{-j/2}-2^{-[j]_{c}/2}\right) \xi_2 - \left(2^{-j}k - 2^{-[j]_{c}/2}2^{-j/2} k\right)\xi_1 - \left(2^{-[j]_{c}/2}2^{-j/2} k - 2^{-[j]_{c}} [k]_{j,{c}}\right) \xi_1 \right)\right|  \nonumber\\
      & \leq  \ \left| \left({c} 2^{-j} \xi_1, \left(2^{-j/2}-2^{-[j]_{c}/2}\right) \left(\xi_2 - 2^{-j/2} k \xi_1\right) - \left(2^{-[j]_{c}/2}2^{-j/2} k - 2^{-[j]_{c}} [k]_{j,{c}}\right) \xi_1\right)\right|  \nonumber\\
      & \leq  \ {c}\left| 2^{-j} \xi_1, \left(2^{-j/2}\right) \left(\xi_2 - 2^{-j/2} k \xi_1\right)\right| + {c} \left|2^{-[j]_{c}} \xi_1\right|  \nonumber\\
      & \lesssim  \ {c}\left(1+\left|2^{-j} \xi_1\right|^2\right)^{\frac{1}{2}} \left(1+\left|2^{-j/2} \left(\xi_2 + 2^{-j/2} k \xi_1\right)\right|^2\right)^{\frac{1}{2}}.   \label{eq:Step3TowardsU1Estimate}
\end{align}
Combining \eqref{eq:Step1TowardsU1Estimate}, \eqref{eq:Step2TowardsU1Estimate} and \eqref{eq:Step3TowardsU1Estimate} yields that
\begin{align}
 U_1(\xi, c) \lesssim {c} \int_{-\log_2(\mathfrak{a})}^\infty \int_{-2^{j/2} \mathfrak{s}}^{2^{j/2} \mathfrak{s}}\ \frac{\min\{|2^{-j} \xi_1|, 1\}^M}{(1+|2^{-j} \xi_1|^2)^{\frac{L}{2}-1} (1+|2^{-j/2} (\xi_2 + 2^{-j/2} k \xi_1)|^2)^{\frac{L}{2}-1}} \, \mathrm{d}k \, \mathrm{d}j \lesssim {c}. \label{eq:EstimateU1}
\end{align}
Additionally, applying \eqref{eq:EstimateU1}, \eqref{eq:EstimateU2} to \eqref{eq:WeNeedToPlugOurEstimatesHere} and \eqref{eq:ARandomSplittingThatMightBeImportantLater} shows that
$$
\left|S(h) - \sum_{\iota = -1,1} \big(W_1^\iota(h, c) + W_2^\iota(h, c)\big)\right|  = \mathcal{O} \left(\left(c + R(c)\right)|h|_{H^1(\R^2)}^2\right) = \mathcal{O} \left(c|h|_{H^1(\R^2)}^2\right),
$$
which after replacing $h$ by $u_\varepsilon$ and $c$ by $c_\varepsilon$ implies \eqref{eq:AuxStatement1}. This completes the proof.

\end{proof}

\section*{Acknowledgements}

Philipp Petersen is supported by a DFG Research Fellowship "Shearlet-based energy functionals for anisotropic phase-field methods". He is grateful to the Mathematical Institute at the University
of Oxford for the hospitality and support during his visit.

\appendix

\section{Equivalence of the shearlet-based Besov seminorm and the $H^1$ seminorm}\label{app:H1Frame}

We first compute that
\begin{align}\label{eq:FTrafoOfShearlet}
\mathcal{F}(\psi_{a,s,0})(\xi) = a^{\frac34} \widehat{\psi}\left( a \xi_1, \sqrt{a}(\xi_2 + s \xi_1)\right) \quad \text{ for all } \xi \in \R^2.
\end{align}
Now let $f \in L^2(\R^2)$, $\mathfrak{a}>0$ and $\mathfrak{s}>0$; then, by Parseval's identity, the convolution theorem, and \eqref{eq:FTrafoOfShearlet} we have that
\begin{align*}
&\int_{\R^2} |\langle f, K(\cdot -t) \rangle|^2 \, \,\mathrm{d}t + \int_{\R^2}\int_{-\mathfrak{s}}^{\mathfrak{s}} \int_{0}^{\mathfrak{a}} a^{-2}|\langle f, \psi_{a,s,t} \rangle |^2 a^{-3} \, \mathrm{d}a \, \mathrm{d}s \, \mathrm{d}t\\
&\qquad  + \int_{\R^2}\int_{-\mathfrak{s}}^{\mathfrak{s}} \int_{0}^{\mathfrak{a}} a^{-2} \left|\left\langle f, \widetilde{\psi}_{a,s,t} \right\rangle \right|^2 a^{-3} \, \mathrm{d}a \, \mathrm{d}s \, \mathrm{d}t\\
& =  \int_{\R^2} |\hat{f}(\xi)|^2 \left( |\widehat{K}(\xi)|^2 + \int_{-\mathfrak{s}}^{\mathfrak{s}} \int_{0}^{\mathfrak{a}} \left|\widehat{\psi}(a \xi_1, \sqrt{a}(\xi_2 + s\xi_1) )\right|^2  a^{-\frac72} \, \mathrm{d}a \, \mathrm{d}s \right.\\
&\qquad  + \left. \int_{\R^2}\int_{-\mathfrak{s}}^{\mathfrak{s}} \int_{0}^{\mathfrak{a}} \left|\widehat{\widetilde{\psi}}(\sqrt{a}(\xi_1 + s\xi_2) , a \xi_2)\right|^2  a^{-\frac72} \, \mathrm{d}a \, \mathrm{d}s \right) \,\mathrm{d}\xi \eqqcolon \mathrm{I}(f).
\end{align*}
Let, for $\xi \in \R^2$, $\widehat{\mu}(\xi) \coloneqq \widehat{\psi}(\xi)/\xi_1$ and $\widehat{\widetilde{\mu}}(\xi) \coloneqq \widehat{\widetilde{\psi}}(\xi)/\xi_2$; then,
\begin{align}\label{eq:FreqRep}
\mathrm{I}(f) =  &\int_{\R^2} |\hat{f}(\xi)|^2 \left( |\widehat{K}(\xi)|^2 + |\xi_1|^2 \cdot \int_{-\mathfrak{s}}^{\mathfrak{s}} \int_{0}^{\mathfrak{a}} \left|\widehat{\mu}(a \xi_1, \sqrt{a}(\xi_2 + s\xi_1) )\right|^2  a^{-\frac32} \, \mathrm{d}a \, \mathrm{d}s \right.\\
&\qquad  + \left. |\xi_2|^2 \cdot  \int_{\R^2}\int_{-\mathfrak{s}}^{\mathfrak{s}} \int_{0}^{\mathfrak{a}} \left|\widehat{\widetilde{\mu}}(\sqrt{a}(\xi_1 + s\xi_2) , a \xi_2)\right|^2  a^{-\frac32} \, \mathrm{d}a \, \mathrm{d}s \right) \,\mathrm{d}\xi.\nonumber
\end{align}
It was shown in \cite[Theorem 4.3]{grohs2011continuous} and \cite[Lemma 4.2]{grohs2011continuous} that there exist
 $\mathfrak{a}^*>0$ and $\mathfrak{s}^*>0$ such that for all $\mathfrak{a} > \mathfrak{a}^*, \mathfrak{s}> \mathfrak{s}^*$ there exist $A, B\in (0,\infty)$ such that
\begin{align}
A \leq &\int_{-\mathfrak{s}}^{\mathfrak{s}} \int_{0}^{\mathfrak{a}} \left|\widehat{\mu}(\sqrt{a}(\xi_1 + s\xi_2) , a \xi_2)\right|^2  a^{-\frac32} \, \mathrm{d}a \, \mathrm{d}s, \quad\text{ for all } \xi \text{ such that }|\xi_{1}| \geq |\xi_2| \text{ and } |\xi_1|\geq 1; \label{eq:FirstOccuranceOfA}\\
A \leq &\int_{-\mathfrak{s}}^{\mathfrak{s}} \int_{0}^{\mathfrak{a}} \left|\widehat{\widetilde{\mu}}(\sqrt{a}(\xi_1 + s\xi_2) , a \xi_2)\right|^2  a^{-\frac32} \, \mathrm{d}a \, \mathrm{d}s, \quad \text{ for all } \xi \text{ such that } |\xi_2| \geq |\xi_1| \text{ and } |\xi_2|\geq 1;\label{eq:SecondOccuranceOfA}\\
&\int_{-\mathfrak{s}}^{\mathfrak{s}} \int_{0}^{\mathfrak{a}} \left|\widehat{\mu}(\sqrt{a}(\xi_1 + s\xi_2) , a \xi_2) \right |^2  a^{-\frac32} \, \mathrm{d}a \, \mathrm{d}s \leq B,  \quad \text{ for all } \xi \in \R^2; \label{eq:FirstOccuranceOfB}\\
&\int_{-\mathfrak{s}}^{\mathfrak{s}} \int_{0}^{\mathfrak{a}} \left|\widehat{\widetilde{\mu}}(\sqrt{a}(\xi_1 + s\xi_2) , a \xi_2)\right|^2  a^{-\frac32} \, \mathrm{d}a \, \mathrm{d}s \leq B,  \quad \text{ for all } \xi \in \R^2. \label{eq:SecondOccuranceOfB}
\end{align}
By \cite[Equation 5]{grohs2011continuous}, we observe that $B$ in \eqref{eq:FirstOccuranceOfB} and \eqref{eq:SecondOccuranceOfB} can be chosen independently from $\mathfrak{a}$ and $\mathfrak{s}$. Moreover, since increasing $\mathfrak{a}$ and $\mathfrak{s}$ only increases the right-hand sides of \eqref{eq:FirstOccuranceOfA} and \eqref{eq:SecondOccuranceOfA}, we conclude that $A,B$ are independent of $\mathfrak{s}, \mathfrak{a}$.

We have that, if $|\xi_{1}| \geq |\xi_2|$, then $|\xi|^2/2 \leq |\xi_1|^2 \leq |\xi|^2 $ and if $|\xi_{2}| \geq |\xi_1|$, then $|\xi|^2/2 \leq |\xi_2|^2 \leq |\xi|^2$. Additionally,
by the assumptions of Theorem \ref{thm:EmbeddingH1R2}, we have that there exist $0<\widetilde{A}< \widetilde{B}$ such that $|K(\xi)|/|\xi|\in [\widetilde{A}, \widetilde{B}]$ for all $\xi \in [-1,1]^2$. Hence, for $\mathfrak{a} > \mathfrak{a}^*$ and $\mathfrak{s}> \mathfrak{s}^*$ we have that
\begin{align*}
 \mathrm{I}(f) = \int_{\R^2} |\hat{f}(\xi)|^2 |S(\xi)|^2 \,\mathrm{d}\xi,
\end{align*}
where $S: \R^2 \to \R$ is such that
$$
\min\left\{\widetilde{A}, \frac{A}{2}\right\} |\xi|^2 \leq |S(\xi)|^2 \leq \max \left\{ B, \widetilde{B} \right\}|\xi|^2 \quad \text{ for all } \xi \in \R^2.
$$
Since $|f|_{H^1(\R^2)} \sim \int_{\R^2} |\xi|^2 |\hat{f}(\xi)|^2 \,\mathrm{d}\xi$, this shows that $|f|_{H^1(\R^2)} \sim I(f)$ for all $f \in H^1(\R^2)$, where the implicit constants depend on $\mathfrak{s}^*$ and $\mathfrak{a}^*$, but not on $\mathfrak{s}$, and $\mathfrak{a}$.

\section{Canonical choices of associated directional weights}\label{app:canonicalWeight}

Let $\mathfrak{s} > \mathfrak{s}^* \geq  1$ and let $\mathcal{N}$ be a norm on $\R^2$. Below we construct an directional weight associated with $\mathcal{N}$. We define $t_\mathfrak{s}: \R \to \R$ to be a continuously differentiable cubic spline with nodes
$\{-\mathfrak{s}, -{1}/{\mathfrak{s}}, 1/\mathfrak{s}, \mathfrak{s}\}$, satisfying $t_\mathfrak{s} \equiv 1$ on $[-{1}/{\mathfrak{s}}, {1}/{\mathfrak{s}}]$ and $t_\mathfrak{s} \equiv 0$ on $[-\mathfrak{s}, \mathfrak{s}]^c$. 

Next we set, for $x \in \R$, 
\begin{align*}
\omega(1,x) &\coloneqq \left(\frac{|\cos(\arctan(x))|^{-1}}{\max \{|\cos(\arctan(x))| , |\sin(\arctan(x))| \}} \cdot t_\mathfrak{s}(x)\right)^{\frac{1}{2}} \cdot \mathcal{N}\left(\binom {\cos(\arctan(x))} {\sin(\arctan(x))}\right)
\end{align*}
Moreover, we set, for $x \neq 0$,
\begin{align*}
\omega(-1,x) \coloneqq \left(\frac{|\sin(\arctan(x))|^{-1}}{\max \{|\cos(\arctan(x^{-1}))| , |\sin(\arctan(x^{-1}))| \}} \cdot \left( 1- t_\mathfrak{s}\left(x^{-1}\right)\right)\right)^{\frac{1}{2}} \cdot \mathcal{N}\left(\binom {\cos(\arctan(x^{-1}))} {\sin(\arctan(x^{-1}))}\right),
\end{align*}
and $\omega(-1,0) \coloneqq 0$.
We have by construction that $t_\mathfrak{s}^{1/2}$ and $\mathcal{N}$ are Lipschitz continuous. Moreover, $\max \{|\cos(\arctan(x))| , |\sin(\arctan(x))| \}$ is bounded below by 
$$
\min_{x \in [-\pi/2, \pi/2]} \max\{|\cos(x)| , |\sin(x)| \} = 1/\sqrt{2}.
$$
Additionally, we have that 
$\suppp t_\mathfrak{s} \subset [-\mathfrak{s}, \mathfrak{s}]$ and hence 
$$
\cos(\arctan(x))> c 
$$
for $x \in \suppp t_\mathfrak{s}$ and a constant $c>0$. Similarly, $\suppp (1-t_\mathfrak{s})\subset [-1/\mathfrak{s}, 1/\mathfrak{s}]^c$ and hence 
$$
\sin(\arctan(x^{-1})) > c' 
$$
for a constant $c'>0$. We conclude that $\omega(\iota,\cdot)$ is Lipschitz continuous for $\iota = \pm 1$. 
Furthermore, we have that there exists a $c''>0$ such that, for all $x \in \R$, $\mathcal{N}(x) > c''$.

Therefore, 
\begin{align*}
\suppp\omega(1,x) &= \suppp t_\mathfrak{s}(x) \subset [-\mathfrak{s}, \mathfrak{s}],\\
\suppp\omega(-1,x) &= \suppp \left(1 - t_\mathfrak{s}(x^{-1})\right) \subset [-\mathfrak{s}, \mathfrak{s}],\\
\min_{x\in [-\mathfrak{s}^*,\mathfrak{s}^*]} \omega(1,x) &> \frac{c''}{2^{1/4}} \sqrt{c} \min_{x\in [-\mathfrak{s}^*,\mathfrak{s}^*]} \sqrt{t_\mathfrak{s}(x)} > 0,\\
\min_{x\in [-\mathfrak{s}^*,\mathfrak{s}^*]} \omega(-1,x) &> \frac{c''}{2^{1/4}} \sqrt{c'} \min_{x\in [-\mathfrak{s}^*,\mathfrak{s}^*]} \sqrt{\left(1 - t_\mathfrak{s}\left(x^{-1}\right)\right)} > 0.
\end{align*}
We conclude that $\omega$ is a directional weight according to Definition \ref{def:directionalWeight}. Next, we analyse if it is also associated with the norm $\mathcal{N}$ in the sense of \eqref{eq:associatedNormDefinition}. This is essentially clear by construction. Indeed, we have that, for all $(\eta_1,\eta_2) \in \mathbb{S}^1$,
$$
\binom{\cos(\arctan(\eta_2/ \eta_1))}{\sin(\arctan(\eta_2/ \eta_1))} = \pm \binom{\eta_1}{\eta_2}.
$$ 
Therefore, for all $(\eta_1,\eta_2) \in \mathbb{S}^1$,
\begin{align*}
&\max\left\{|\vn_1|, |\vn_2|\right\}\left(|\vn_1| \omega\left(1, \frac{\vn_2}{\vn_1}\right)^2 + |\vn_2|\omega\left(-1, \frac{\vn_1}{\vn_2}\right)^2 \right)\\
= \ &t_\mathfrak{s}\left(\frac{\vn_2}{\vn_1}\right) \cdot \mathcal{N}\left(\pm \binom { \eta_1} {\eta_2}\right)^2 + \left( 1- t_\mathfrak{s}\left(\frac{\vn_2}{\vn_1}\right)\right) \cdot \mathcal{N}\left(\pm \binom{\eta_1}{\eta_2}\right)^2\\
 = \ & \mathcal{N}\left(\pm \binom{\eta_1}{\eta_2}\right)^2 = \mathcal{N}\left(\binom{\eta_1}{\eta_2}\right)^2.
\end{align*}
Hence, we have that the weight $\omega$ is a directional weight associated with $\mathcal{N}$.

\section{Proof of Proposition \ref{prop:covering}}\label{app:ProofOfCovering}

\begin{proof}
Let $\alpha$ be the smallest interior or exterior angle of $P$ and define $c_0 \coloneqq \min\{ \sin(\alpha), 1\}/2$. We denote, for $i = 1,\dots, N$, by $g_i$ the line segment with start point $x_i$ and end point $x_{i+1}$.
Let $r_1 \coloneqq \min \{ \mathfrak{d}(g_i, g_j): \ i,j \in \{1,\dots, N\}, \ i>j, \ i-j >1, \  (i,j)\neq (N,1)\}$ be the smallest distance between each line segment and the closest non-neighbouring line segment. Since $\partial P$ is non-self-intersecting, we have that $r_1>0$.
Let $r_2 \coloneqq \min_{i = 1,\dots, N} |x_i-x_{i+1}|$, where $x_{N+1} = x_1$. Again, we have that $r_2>0$ since otherwise two vertices would coincide. We set $r_0 \coloneqq \min\{r_1, r_2\}/8$.

Let $r < r_0$ and let $i \in \{1,\dots, N\}$; then, by the Pythagorean theorem, there is a coordinate $t \in \{1,2\}$ such that 
\begin{align}\label{eq:EstimateForT}
\kappa_i\coloneqq |(x_i)_t-(x_{i+1})_t| \geq |x_i - x_{i+1}|/\sqrt{2} \geq |x_i - x_{i+1}|/4 + r_2/4 \geq |x_i - x_{i+1}|/4 + 2 r.
\end{align}
 Define $L_i \coloneqq \lceil(\kappa_i- 2r)/(r c_0)\rceil$ and $s_i \coloneqq (\kappa_i-2r) / (2 L_i)$. We have that $L_i \geq  |x_i - x_{i+1}|/(4r)$ by equation \eqref{eq:EstimateForT}. Moreover, we have that
\begin{align*}
 s_i = \frac{\kappa_i-2r}{2 \left\lceil(\kappa_i- 2r)/(c_0 r )\right\rceil} \leq c_0 r \frac{\kappa_i-2r}{2 (\kappa_i- 2r)} = \frac{c_0 r}{2}
\end{align*}
and
\begin{align*}
 s_i \geq \frac{\kappa_i-2r}{(2 + 2(\kappa_i- 2r))/(c_0 r)} = c_0 r \frac{\kappa_i-2r}{2 + 2(\kappa_i- 2r) } \geq \frac{c_0 r}{4},
 \end{align*}
where the last estimate follows since $0 < \kappa_i - 2r < 1$ and hence $\left(2 + 2(\kappa_i- 2r) \right) \leq 4$.
It is not hard to see that $g_i$ can be parametrised by
$$
g_i(w) = x_i + \binom{\lambda_i^{t-1} w}{\lambda_i^{2-t} w}, \quad w \in [0, \kappa_i],
$$
for a $|\lambda| = 1$. We define, for $k = 1, \dots, L_i$,  $w_k\coloneqq r + 2 s_i k - s_i$ and
$$
z_{k,i} \coloneqq  x_i + \binom{\lambda_i^{t-1} w_k}{\lambda_i^{2-t} w_k}.
$$
Since $|\lambda| = 1$ it follows  that $|(z_{k,i})_t - (z_{k,i+1})_t| = 2 s_i$ for $i = 1, \dots, L_i - 1$ and
$|(z_{k,i})_{t'} - (z_{k,i+1})_{t'}| \leq 2 s_i$ for $i = 1, \dots,  L_i - 1$, where $t' = 2$ if $t = 1$ and $t' = 1$ if $t= 2$. Moreover, $|(x_i)_t- (z_{1,i})_t|= r$ and $|(x_i)_{t'}- (z_{1,i})_{t'}|\leq r$ and $|(x_{i+1})_t- (z_{L_i,1})_t|= r$ and $|(x_{i+1})_{t'}- (z_{L_i,i})_{t'}|\leq r$. This shows that
\begin{align}\label{eq:overlap}
g_i \subset \dot{\bigcup}_{k = 1, \dots, L_i} C(z_{k,i}, s_i) \ \dot{\cup} \ \dot{\bigcup}_{\ell = i, i+1} C(x_\ell, r).
\end{align}

Next, we compute for any $i \leq N$ and $k_i \leq L_i$ the distance between $z_{k_i,i}$ and any line segment $g_j$, $j \neq i$. If $|j-i| > 1$ and $(i,j) \neq (1,N), (N,1)$, then we know by assumption that $\mathfrak{d}(z_{k_i,i}, g_j) \geq r_1 \geq 8 r$. We also obtain that $\mathfrak{d}(x_{i}, g_j) \geq 8 r$ and $\mathfrak{d}(x_{i+1}, g_j) \geq 8 r$.
Hence, for any $k_j \leq L_j$ we have
\begin{align}\label{eq:EmptyNonNeighborIntersection}
C(z_{k_i,i}, s_i) \cap C(z_{k_j,j}, s_j) =  \emptyset \quad \text{ and } \quad C(z_{k_i,i}, s_i) \cap C(x_{j}, r)=  \emptyset.
\end{align}
If $j = i+1$ or $j= 1$, $i = N$, then let $z^* \in g_j$ be such that
$\mathfrak{d}(z_{k,i}, z^*) = \mathfrak{d}(z_{k,i}, g_{i+1})$. This choice is possible since $g_j$ is compact.
If  $z^* = x_{j}$ then we have by construction $\mathfrak{d}(z_{k,i}, z^*) \geq s_i + r$.
If the smallest angle between $g_i$ and $g_{j}$ is larger than or equal to $\pi/2$ then it is clear that $z^* = x_{j}$, so we can proceed by assuming that the smallest angle $\beta$ between $g_i$ and $g_{j}$ satisfies $\alpha \leq \beta < \pi/2$.
By construction, the triangle with vertices $z_{k,i}, z^*, x_{j}$ has a right angle at $z^*$.
By the law of sines, we conclude that $\mathfrak{d}(z_{k,i}, z^*)/\sin(\beta) = |z_{k,i} - x_{j}|$. Using that $|z_{k,i} - x_{j}| > r + s_i$ we conclude that $\mathfrak{d}(z_{k,i}, z^*) \geq  2(r + s_i) \sin(\alpha) = 2(r + s_i) c_0$.

We conclude, for all $i \leq N$, $k_1 \leq L_i$ and $k_2 \leq L_{i+1}$, that
\begin{align}\label{eq:EmptyneighborIntersection}
C(z_{k_1,i}, s_i) \cap C(z_{k_2,i}, s_{i+1}) \subset B_{\sqrt{2}s_i}(z_{k_1,i}) \cap B_{\sqrt{2}s_{i+1}}(z_{k_2,i+1}) = \emptyset,
\end{align}
since $2\sqrt{2}s_i < r c_0/\sqrt{2} < |z_{k_2,j} - z_{k_1,i}|$. From \eqref{eq:overlap}, \eqref{eq:EmptyNonNeighborIntersection}, and \eqref{eq:EmptyneighborIntersection} we conclude that for any $i, j \leq N$, $k_i \leq L_i$, $k_j \leq L_j$ we have that
\begin{align} \label{eq:emptyintersections}
C(z_{k_i,i}, s_i) \cap C(z_{k_j,j}, s_j) &= \emptyset, \text{ if } (i,k_i) \neq (j,k_j), \text{ and }\\
C(x_{i}, r) \cap C(x_{i}, r) &= \emptyset, \text{ if } j \neq i,\nonumber\\
C(z_{k_i,i}, s_i) \cap C(x_{j}, r) &= \emptyset.\nonumber
\end{align}
We set $L \coloneqq \sum_{i=1}^N L_i$ and get that $L \geq \sum_{i=1}^N  |x_i - x_{i+1}|/4 = \ell(P)/4$. Next, we conclude from \eqref{eq:overlap} and \eqref{eq:emptyintersections} that \eqref{eq:theCover} holds. Finally, it is not hard to see that, $C(z_{k, i}, s_i + c_0 r/4) \cap g_j = \emptyset$ for all $j \neq i$ and therefore $\chi_{P \cap  C(z_{k,i}, s_i + c_0 r/4)}( \cdot -z_{k,i})$ is a rotated Heaviside function on $[-s_i -  c_0 r / 4 , s_i + c_0 r / 4]^2$ with the same normal direction as $\partial P$ at $z_{k,i}$.
\end{proof}

\section{Some integral estimates}\label{app:Computations}
We start by showing equation \eqref{eq:estimateLowFrequencyPart}. We have that
\begin{align*}
\int_{\mathfrak{a}}^{\infty}\int_{\R} | \widehat{\mu}_{a,s,0,1}(\xi)|^2 a^{-3} \mathrm{d}s \, \mathrm{d}a = & \ \int_{\mathfrak{a}}^{\infty}\int_{\R} \left| a^{\frac34} \frac{\min\{|a \xi_1|^3,1\}}{(1+|a\xi_1|)^4 (1+|\sqrt{a}(s \xi_1 + \xi_2)|)^4} \right|^2 a^{-3} \mathrm{d}s \, \mathrm{d}a\\
 \leq & \left( \frac{1}{(1+|\mathfrak{a} \xi_1|)^2} \right) \int_{\mathfrak{a}}^{\infty}\int_{\R} \left|\frac{\min\{|a \xi_1|^3,1\}}{(1+|a\xi_1|)^3 (1+|\sqrt{a}(s \xi_1 + \xi_2)|)^4} \right|^2 a^{-\frac32} \mathrm{d}s \, \mathrm{d}a\\
 \lesssim & \frac{1}{|\mathfrak{a}|^2} \left( \frac{1}{(1+|\xi_1|)^2} \right) \int_{\R^+}\int_{\R} \left|\frac{\min\{|a \xi_1|^3,1\}}{(1+|a\xi_1|)^3 (1+|\sqrt{a}(s \xi_1 + \xi_2)|)^4} \right|^2 a^{-\frac32} \mathrm{d}s \, \mathrm{d}a.
\end{align*}
By \cite[Equation 5]{grohs2011continuous}, we have that
\begin{align}\label{eq:Equation5Grohs}
\int_{\R^+}\int_{\R} \left|\frac{\min\{|a \xi_1|^3,1\}}{(1+|a\xi_1|)^3 (1+|\sqrt{a}(s \xi_1 + \xi_2)|)^4} \right|^2 a^{-\frac32} \mathrm{d}s \, \mathrm{d}a < \infty,
\end{align}
which yields \eqref{eq:estimateLowFrequencyPart}.

Next, we demonstrate \eqref{eq:estimateHighFrequencyPart}. If $\nu_s |\xi_1|/2\geq |\xi_2|$ then
\begin{align*}
&\int_0^\mathfrak{a}\int_{-\infty}^{-\nu_s} | \widehat{\mu}_{a,s,0,1}(\xi)|^2 a^{-3}\mathrm{d}s \, \mathrm{d}a\\
& =  \int_0^\mathfrak{a} \int_{\R} \left| a^{\frac34} \frac{\min\{|a \xi_1|^3,1\}}{(1+|a\xi_1|)^4 (1+|\sqrt{a}(s \xi_1 + \xi_2)|)^4} \right|^2 a^{-3} \mathrm{d}s \, \mathrm{d}a\\
& \leq \int_0^\mathfrak{a} \int_{\R} \left| a^{\frac34} \frac{\min\{|a \xi_1|^3,1\}}{ (1+|\sqrt{a}(s \xi_1)|)^2  (1+|a\xi_1|)^4 (1+|\sqrt{a}(s \xi_1 + \xi_2)|)^2} \right|^2 a^{-3} \mathrm{d}s \, \mathrm{d}a\\
 & \leq \int_0^\mathfrak{a}   \frac{\min\{|a \xi_1|^2,1\}}{(1+|\sqrt{a}(s \xi_1)|)^4 } \int_{\R} \left| a^{\frac34} \frac{\min\{|a \xi_1|^3,1\}}{  (1+|a\xi_1|)^4 (1+|\sqrt{a}(s \xi_1 + \xi_2)|)^2} \right|^2 a^{-3} \mathrm{d}s \, \mathrm{d}a\\
 & \leq  \frac{|\xi_1|^2}{(1+|(s \xi_1)|)^4 } \int_0^\mathfrak{a} \int_{\R} \left| a^{\frac34} \frac{\min\{|a \xi_1|^3,1\}}{  (1+|a\xi_1|)^4 (1+|\sqrt{a}(s \xi_1 + \xi_2)|)^2} \right|^2 a^{-3} \mathrm{d}s \, \mathrm{d}a\\
& \lesssim  \frac{|\xi_1|^2}{(1+|(s \xi_1)|)^4 },
\end{align*}
where we again invoked \cite[Equation 5]{grohs2011continuous} to estimate the integral over $s$ and $a$. This completes the estimate of \eqref{eq:estimateHighFrequencyPart}.

\section{The Bessel inequality}\label{app:Bessel}

Let $f \in L^2(\R^2)$ and $\mathfrak{a}>0,\mathfrak{s}>0$; we then compute, by Plancherel's theorem, that
\begin{align*}
\int_0^\mathfrak{a} \int_{-\mathfrak{s}}^\mathfrak{s} \int_{\R^2}| \langle f, \psi_{a,s,t,\iota} \rangle |^2 a^{-3} \, \,\mathrm{d}t \, \mathrm{d}s \, \mathrm{d}a = \int_0^\mathfrak{a} \int_{-\mathfrak{s}}^\mathfrak{s} \int_{\R^2} \left| \left \langle \hat{f} \widehat{\psi}_{a,s,0,\iota},
\mathrm{e}^{-2\pi i \langle \cdot, m\rangle}\right\rangle \right|^2 a^{-3} \, \,\mathrm{d}t \, \mathrm{d}s \, \mathrm{d}a.
\end{align*}
Using Parseval's identity and Fubini's theorem, we obtain that	
\begin{align*}
\int_0^\mathfrak{a} \int_{-\mathfrak{s}}^\mathfrak{s} \int_{\R^2}\left| \left\langle \hat{f} \  \widehat{\psi}_{a,s,0,\iota}, \mathrm{e}^{-2\pi i \langle \cdot, m\rangle}\right\rangle \right|^2 a^{-3} \, \,\mathrm{d}t \, \mathrm{d}s \, \mathrm{d}a
= & \ \int_0^\mathfrak{a} \int_{-\mathfrak{s}}^\mathfrak{s} \left\|\hat{f} \widehat{\psi}_{a,s,0,\iota}\right\|_{L^2(\R^2)}^2 a^{-3} \mathrm{d}s \, \mathrm{d}a \\
 = & \ \int_{\R^2} \left|\hat{f}(\xi)\right|^2 \int_0^\mathfrak{a} \int_{-\mathfrak{s}}^\mathfrak{s} \left| \widehat{\psi}_{a,s,0,\iota}\right|^2 a^{-3} \mathrm{d}s \, \mathrm{d}a \, \mathrm{d} \xi.
\end{align*}
Equation \eqref{eq:Equation5Grohs} and
$$
\left|\widehat{\phi^1}(\xi)\right| \leq (1+|\xi|)^{-4} \quad \text{ and } \quad \left|\widehat{\psi^1}(\xi)\right| \leq \frac{\min\{ |\xi|, 1\}^4}{(1+|\xi|)^4}
$$
imply that $\int_0^\mathfrak{a} \int_{-\mathfrak{s}}^\mathfrak{s} | \widehat{\psi}_{a,s,0,\iota}|^2 a^{-3} \mathrm{d}s \, \mathrm{d}a$ is uniformly bounded by a constant $C > 0$, independent of $\mathfrak{a}$ and $\mathfrak{s}$, which yields that
\begin{align}\label{eq:BesselInequalityIndepOfsa}
\int_0^\mathfrak{a} \int_{-\mathfrak{s}}^\mathfrak{s} \int_{\R^2}| \langle f, \psi_{a,s,t,\iota} \rangle |^2 a^{-3} \, \,\mathrm{d}t \, \mathrm{d}s \, \mathrm{d}a \leq C \|f\|_{L^2(\R^2)}^2, \text{ for all } \mathfrak{a},\mathfrak{s} > 0.
\end{align}
If $f \in L^2((0,1)^2)$, then
$$
\int_0^\mathfrak{a} \int_{-\mathfrak{s}}^\mathfrak{s} \int_{(0,1)^2} \left| \left\langle f, \psi_{a,s,t,\iota}^{per} \right\rangle \right|^2 a^{-3} \, \,\mathrm{d}t \, \mathrm{d}s \, \mathrm{d}a
\leq \int_0^\mathfrak{a} \int_{-\mathfrak{s}}^\mathfrak{s} \int_{\R^2} \left| \left\langle f^{per}, \psi_{a,s,t,\iota} \right\rangle \right|^2 a^{-3} \, \,\mathrm{d}t \, \mathrm{d}s \, \mathrm{d}a,
$$
where $f^{per}(x) = f(x - \lfloor x \rfloor)$ for all $x \in [-2,2]^2$, where $\lfloor \cdot \rfloor$ is applied componentwise, and $f^{per}(x) = 0$ on $\R^2 \setminus [-2,2]^2$.
Hence, by \eqref{eq:BesselInequalityIndepOfsa},
$$
\int_0^\mathfrak{a} \int_{-\mathfrak{s}}^\mathfrak{s}\int_{(0,1)^2} \left| \left\langle f, \psi_{a,s,t,\iota}^{per} \right\rangle \right|^2 a^{-3} \, \,\mathrm{d}t \, \mathrm{d}s \, \mathrm{d}a \leq C \left\|f^{per}\right\|_{L^2(\R^2)}^2 = C \|f\|_{L^2((0,1)^2)}^2.
$$
Performing the same argument for the second cone yields that
\begin{align}\label{eq:BesselPeriodic}
\sum_{\iota = -1,1}\int_0^\mathfrak{a} \int_{-\mathfrak{s}}^\mathfrak{s} \int_{(0,1)^2} | \langle f, \psi_{a,s,t,\iota}^{per} \rangle |^2 a^{-3} \, \,\mathrm{d}t \, \mathrm{d}s \, \mathrm{d}a \leq C \|f\|_{L^2((0,1)^2)}^2, \quad \text{ for all } f \in L^2((0,1)^2).
\end{align}

\section{Counterexample to the convergence result of Bertozzi and Dobrosotskaya}\label{app:Counterex}

In \cite[Theorem 1.1]{dobrosotskaya2011wavelet} it is claimed, that for any sequence $u_n \in H^1((0,1)^2)$ converging to $\chi_D$ in $L^1((0,1)^2)$, where $D$ is a set of finite perimeter, the quotient
$$
 \frac{|u_n|_{\Bp}}{|u_n|_{H^1((0,1)^2)}}
$$
converges to $\mathcal{R}(\chi_D)$, and $\mathcal{R}(\chi_D)$ is a nonconstant function of $\partial D$. This theorem cannot hold as the following counterexample demonstrates.

For a contradiction, we assume that \cite[Theorem 1.1]{dobrosotskaya2011wavelet} holds. Pick $D \subset (0,1/2)^2$ with finite perimeter and a sequence $(u_n)_{n\in \N} \subset H^1((0,1)^2)$, such that $u_n \to \chi_D$ in $L^1((0,1)^2)$, and $|u_n|_{H^1((0,1)^2)} \sim |u_n|_{\Bp} \to \infty$ for $n \to \infty$, and $\suppp u_n \subset (0,1/2)^2$.
Let $g_n$ be a sequence in $H^1((0,1)^2)$ with $\|g_n\|_{L^1((0,1)^2)} \to 0$, $\suppp g_n \subset (1/2,1)^2$ and $|g_n|_{H^1((0,1)^2)} \sim |u_n|_{H^1((0,1)^2)}^2$.
Since the $H^1((0,1)^2)$ seminorm of $g_n$ is asymptotically much larger than the $H^1((0,1)^2)$ seminorm of $u_n$, one can easily observe that
\begin{align*}
\frac{|u_n + g_n|_{\Bp}}{|u_n+ g_n|_{H^1((0,1)^2)}} -  \frac{|g_n|_{\Bp}}{|g_n|_{H^1((0,1)^2)}} \to 0,
\end{align*}
for $n \to \infty$. Hence, by \cite[Theorem 1.1]{dobrosotskaya2011wavelet},
$$
\frac{|u_n|_{\Bp}}{|u_n|_{H^1((0,1)^2)}}, \quad \frac{|u_n + g_n|_{\Bp}}{|u_n+ g_n|_{H^1((0,1)^2)}}, \text{ and }  \frac{|g_n|_{\Bp}}{|g_n|_{H^1((0,1)^2)}},
$$
all converge to the same limit $\mathcal{R}(\chi_D)$ for $n \to \infty$. The limit of $|g_n|_{\Bp}/|g_n|_{H^1((0,1)^2)}$, however, is independent of $\partial D$, completing the asserted contradiction.

The main mistake in \cite{dobrosotskaya2011wavelet} causing the incorrect conclusion of \cite[Theorem 1.1]{dobrosotskaya2011wavelet} can be found in \cite[Lemma 2.6]{dobrosotskaya2011wavelet}, where in the second to last line an upper bound of $2^J$ is mistaken for a lower bound. The best achievable lower bound from the results in \cite{dobrosotskaya2011wavelet} appears to be $2^{J/2}$.

\bibliographystyle{abbrv}
\bibliography{references}

\end{document}